\newcommand{\ds}{\displaystyle}
\newcommand{\tensor}{\otimes}
\newcommand{\op}{\mathcal}
\newcommand{\cdc}{,\dots,}
\DeclareFontFamily{U}{mathx}{}
\DeclareFontShape{U}{mathx}{m}{n}{<-> mathx10}{}
\DeclareSymbolFont{mathx}{U}{mathx}{m}{n}
\DeclareMathAccent{\widehat}{0}{mathx}{"70}
\DeclareMathAccent{\widecheck}{0}{mathx}{"71}
\numberwithin{equation}{section}
\newtheorem{theorem}{Theorem}[section]
\theoremstyle{plain}
\newtheorem{corollary}[theorem]{Corollary}
\newtheorem{lemma}[theorem]{Lemma}
\newtheorem{proposition}[theorem]{Proposition}
\theoremstyle{definition}
\newtheorem{definition}[theorem]{Definition}
\newtheorem{remark}[theorem]{Remark}
\begin{document}
\title{Representation Stability for Marked Graph Complexes} 
\author{Enoch Fedah and Benjamin C.\ Ward}
\email{benward@bgsu.edu}

\maketitle

\begin{abstract}
We prove a sharp representation stability result for graph complexes with a distinguished vertex, and prove that the chains realizing this sharp bound pass to non-trivial families of graph homology classes.  This result may be interpreted as a higher genus generalization of Hersh and Reiner's stability bound for configuration spaces of points in odd dimensional Euclidean space.  
\end{abstract}
\tableofcontents

\section{Introduction.}
This paper concerns a type of graph complex computing pieces of the compactly supported cohomology of the moduli spaces of marked curves.  The graphs in question come with a choice of distinguished vertex  along with a given number of adjacent marked flags.  The chain complex formed by graphs of genus $g$ with $n$ legs and $r$ marked flags will be denoted $B(g,n,r)$.

The graph complexes $B(g,n,r)$ were developed by Payne and Willwacher \cite{PW} who describe the weight 11 compactly supported cohomology of the moduli space  $\op{M}_{g,n}$ in terms of the cohomology of $B(g,n,11)$.  Subsequently, Canning, Larson, Payne and Willwacher establish a parallel relationship between the weight 15 component of $H_c^\ast(\op{M}_{g,n})$ and the cohomology of $B(g,n,15)$.  The computation of the cohomology of these chain complexes is a difficult problem in general and while the difficulty increases with $g$ and $n$, it decreases with $r$.  Organizing the computations by their complexity or ``excess'', both \cite{PW} and \cite{CLPW} compute the homology in the first few examples.  Comparing these cases, \cite{CLPW} observes that ``the low-excess computations of \cite{PW} in weight 11 carry over almost unaltered'' from the $r=11$ to the $r=15$ case.  

A question motivating this article is, then, how low is low?  In other words, 
for which $(g,n,r)$ is the cohomology of $B(g,n,r)$ a formal consequence of the cohomology of $B(g,n-1,r-1)$?  In this article we address this question using the notion of representation stability. 

Representation stability was introduced by Church and Farb in \cite{CF}.  Recall that to say a sequence   $ A_1 \to A_{2}\to\dots$ is representation stable means, in particular, that each $A_n$ is a representation of the symmetric group $S_n$ and that there exists an $N$ for which $n\geq N$ implies that the irreducible decomposition of $A_n$ is a formal consequence of that of $A_N$, found by adding $n-N$ boxes to the first row of the Young diagrams indexing the irreducibles in $A_N$.

In our case, for a fixed $g$ and $n-r$, we consider the sequence of chain complexes $\dots \to B(g,n,r) \to B(g,n+1,r+1) \to \dots$ formed by adjoining a marked leg labeled by $n+1$ to a graph in the source.  This operation extends to an $S_n$-equivariant chain map, call it $\psi$, and our first results is:

\begin{theorem}\label{mainthmm}  Fix $g$ and $\ell$, for which $m:= 3(g-1) + 2\ell \geq 0$.  The consistent sequence
	$$\dots \to B(g,n,n-\ell)\tensor V_{1^n} \stackrel{\psi}\to B(g,n+1,n+1-\ell)\tensor V_{1^{n+1}} \to \dots $$
	is representation stable and stabilizes sharply at
	$$
	n =	\left\lceil \frac{9(g-1)}{2}\right\rceil +3\ell = \left\lceil \ds\frac{3m}{2} \right\rceil.
	$$
\end{theorem}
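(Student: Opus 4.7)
The plan is to separate the proof into three parts: the upper bound on the stability range via FI\#-module techniques, the sharpness of the bound via an explicit graph construction, and the non-triviality of the resulting classes in graph homology.

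\textbf{Upper bound.} I would promote the consistent sequence $A_n := B(g,n,n-\ell)\otimes V_{1^n}$ to an FI\#-module over the category of finite sets and partial injections. The covariant direction is $\psi$ composed with a relabeling --- adjoin a new marked leg at the distinguished vertex $v_\star$ --- while the contravariant direction deletes a marked leg at $v_\star$, with the sign twist $V_{1^n}$ absorbing orientation signs that arise from reordering edges or flags. Church--Ellenberg--Farb's structure theorem then yields a splitting
\begin{equation*}
A_n \;\cong\; \bigoplus_{k}\; \mathrm{Ind}_{S_k\times S_{n-k}}^{S_n}\!\bigl(W_k\boxtimes \mathrm{triv}\bigr),
\end{equation*}
so representation stability reduces to bounding the $k$ for which $W_k\neq 0$. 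Here $W_k$ consists of chains on graphs in which every marked leg at $v_\star$ is essential --- it cannot be forgotten without violating the structural constraints. Using $g = E - V + 1$, the valence bound $\mathrm{val}(v)\geq 3$ at every non-distinguished vertex, and the half-edge count $2E + n = \sum_v \mathrm{val}(v)$, an optimization over the distribution of legs and valences yields $k\leq \lceil 9(g-1)/2\rceil + 3\ell$. Matching the ceiling exactly requires a parity case analysis on $g-1$.

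\textbf{Sharpness.} For the sharp lower bound I would exhibit, for each $g$ and $\ell$ with $m\geq 0$, an explicit family of trivalent graphs $\Gamma^\sharp_{g,\ell}$ in $B(g,n,n-\ell)$ with $n=\lceil 3m/2\rceil$ saturating the above optimization: a genus-$g$ trivalent skeleton with $v_\star$ of minimal allowable valence, marked legs distributed as symmetrically as possible, and the $\ell$ unmarked legs placed at the remaining trivalent vertices. Computing the $S_n$-stabilizer of $\Gamma^\sharp_{g,\ell}$ and inducing, one identifies an irreducible $V(\lambda)_n$ with $\lambda_2 > 0$ occurring in $A_n$ but not in the image of $\psi$, witnessing sharpness.

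\textbf{Non-triviality in homology.} To promote these chains to nontrivial homology classes I would first verify that $\Gamma^\sharp_{g,\ell}$ is a cycle, using the sign twist to cancel boundary contributions coming from graph automorphisms, and then produce a detecting mechanism. The most natural such detection is via a pushforward from a configuration-space complex, following the Hersh--Reiner analogy flagged in the abstract: construct an $S_n$-equivariant map from a configuration-space chain complex (whose non-vanishing classes are classical) into $B(g,n,n-\ell)$ and verify that $\Gamma^\sharp_{g,\ell}$ is in the image. A fallback would be to build an explicit dual cochain in the Payne--Willwacher presentation that pairs non-trivially with $\Gamma^\sharp_{g,\ell}$.

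\textbf{Main obstacle.} The hardest step will be the homological non-triviality: closed chains in graph complexes routinely fail to represent nontrivial classes, and the target homology $H_\ast(B(g,n,r))$ is largely mysterious (cf.\ Payne--Willwacher, Canning--Larson--Payne--Willwacher). A secondary difficulty is engineering the Euler-characteristic optimization carefully enough for the resulting constant to equal $\lceil 3m/2\rceil$ on the nose, ceiling and all, rather than an off-by-one weaker bound.
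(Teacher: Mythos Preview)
Your FI\# framing is essentially equivalent to the paper's core-graph decomposition: the pieces $W_k$ correspond exactly to what the paper calls core graphs (marked graphs with no marked legs), and the splitting you write down is the content of the paper's direct-sum decomposition of $B(g,n,r)_i$ over core graphs. But your reduction of the stability bound to ``bounding the $k$ for which $W_k\neq 0$'' is a genuine gap. For an FI\#-module $\bigoplus_k M(W_k)$, the sharp stability range is $\max_k\{k + w(W_k)\}$, where $w(W_k)$ is the width (largest first part among irreducibles of $W_k$); it is \emph{not} determined by $\max\{k:W_k\neq 0\}$ alone. A pure Euler-characteristic and valence optimization of the kind you describe gives only $k\leq m$ (this is the paper's lemma characterizing core graphs), and hence only the formal bound $2m$. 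The paper explicitly flags this in the introduction: stabilization of the \emph{set of graphs} happens at $m$, giving $2m$ formally, but the true answer is $\lceil 3m/2\rceil$. The remaining $\lceil m/2\rceil$ comes from bounding $\rho(W_k)$ (the maximum number of rows, in the conjugate picture), and this is the substantive step. The paper carries it out by induction on the genus $g$, using an edge-cutting lemma that compares $\rho$ for a graph and for the graph obtained by cutting a non-disconnecting edge; the induction step splits into three cases (a ``good'' unmarked non-disconnecting edge exists; no good edge but a tadpole; neither), and already the base case $g=1$ requires a representation-theoretic argument exploiting an $(S_2)^r$ subgroup acting on pairs of legs at maximal vertices of the tree. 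None of this is accessible from flag and valence counting alone.

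Two smaller points. First, your ``non-triviality in homology'' section is not part of this theorem at all: the statement concerns representation stability of the chain complexes, and the homological consequences are handled separately (and by quite different methods) elsewhere in the paper. Second, your sharpness witness is mis-specified: the extremal core graphs $\theta_{g,\ell}(p)$ in the paper have the distinguished vertex of \emph{maximal} valence --- every edge is adjacent to it and every edge is marked --- not minimal, and the decisive computation is the hyperoctahedral induction $\mathrm{Ind}_{S_2\wr S_{m/2}}^{S_m}(\mathrm{triv})$, which produces $V_{2^{m/2}}$ as the irreducible with $m/2$ rows and hence forces the bound $m + m/2$.
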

The need to tensor with the alternating representation $V_{1^n}$ simply reflects the convention that the marked legs are alternating.  

The fact that this sequence is representation stable should not be too surprising.  This is essentially just a consequence of the fact that as $n$ and $r$ increase in tandem, eventually every graph of type $(g,n,r)$ will be the image under $\psi$ of a graph of type $(g,n-1,r-1)$.  Determination of the sharp bound, on the other hand, is more subtle.  If we fix $g$ and $\ell$, as above, the point at which the set of graphs stabilize is $m=3(g-1) + 2\ell$.  This formally implies that the sequence stabilizes at or before $2m$.  The actual bound, however, is $\lceil 3m/2 \rceil$. 

Theorem $\ref{mainthmm}$ is a statement about chain complexes, but knowing the sharp stability point allows us to deduce information at the level of (co)homology.  (We opt for homological conventions to match the variance of the stability map.)  In what follows, for a partition $\lambda$ of $N$ and an integer $n>N$,  the notation $(\lambda,1^{n-N})$ indicates the partition of $n$ formed by adding $n-N$ blocks of size $1$ to $\lambda$.

\begin{corollary}\label{3h2}  Fix $g,\ell$ and $m$ as above. Let $\lambda = (\lambda_1\cdc \lambda_k)$ be a partition of $N$ with $\lambda_k \geq 2$.  Then for each degree $i$:  
	
\begin{enumerate}
	\item If $N>\lceil 3m/2 \rceil$, the multiplicity of $(\lambda,1^{n-N})$ in $H_i(B(g,n,n-\ell))$ is $0$.
	\item If $N= \lceil 3m/2 \rceil$, the multiplicity of $(\lambda,1^{n-N})$ in $H_i(B(g,n,n-\ell))$ is independent of $n$. 
\end{enumerate}
\end{corollary}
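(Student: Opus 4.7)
The plan is to deduce the corollary from Theorem \ref{mainthmm} by (i) passing representation stability from the chain complex to its homology, and (ii) converting the Church--Farb indexing $V_{\nu[n]}$ into the indexing $V_{(\lambda,1^{n-N})}$ by undoing the alternating twist. The first step is standard: since the stabilization maps in the theorem are equivariant chain maps, representation stability passes to the induced sequence in homology,
$$\dots \to H_i(B(g,n,n-\ell))\tensor V_{1^n} \to H_i(B(g,n+1,n+1-\ell))\tensor V_{1^{n+1}}\to\dots,$$
with the same stable range $n\geq \lceil 3m/2\rceil$, for every homological degree $i$.

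For step (ii), I would use the identification $V_\mu \tensor V_{1^n}\cong V_{\mu'}$ to rewrite the multiplicity of $V_{(\lambda,1^{n-N})}$ in $H_i(B(g,n,n-\ell))$ as the multiplicity of the conjugate partition in the sign-twisted homology sequence above. A direct calculation with $\lambda=(\lambda_1\cdc\lambda_k)$, $\lambda_k\geq 2$, shows that the conjugate of $(\lambda,1^{n-N})$ is the Church--Farb-type partition $\nu[n]$ with $\nu:=(\lambda'_2\cdc\lambda'_{\lambda_1})$. The hypothesis $\lambda_k\geq 2$ forces $\lambda'_2=\lambda'_1=k$, yielding the key identity $|\nu|+\nu_1=(N-k)+k=N$. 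Equivalently, $\nu[n]$ is a valid partition of $n$ if and only if $n\geq N$.

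Combining these reductions, the multiplicity of $V_{\nu[n]}$ in the sign-twisted homology sequence is independent of $n$ for $n\geq \lceil 3m/2\rceil$. Part (2), where $N=\lceil 3m/2\rceil$, is immediate, since the stable range begins at $n=N$. For part (1), where $N>\lceil 3m/2\rceil$, I would evaluate the constant multiplicity at $n=\lceil 3m/2\rceil<N$; at this value $\nu[n]$ is not a valid partition (explicitly, $n-|\nu|<\nu_1$), so the multiplicity is $0$ there, and by stability it remains $0$ throughout the stable range, in particular for all $n\geq N$. Rather than posing a real obstacle, the argument hinges on the arithmetic coincidence $|\nu|+\nu_1=N$, which is precisely what aligns the existence range of $\nu[n]$ with that of $(\lambda,1^{n-N})$ and makes the sharp threshold of Theorem \ref{mainthmm} translate cleanly into the two cases of the corollary.
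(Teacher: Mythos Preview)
Your step (i) contains a genuine gap. You assert that ``representation stability passes to the induced sequence in homology, with the same stable range $n\geq \lceil 3m/2\rceil$,'' and call this standard, but it is not automatic and is in fact the substantive content the paper isolates in Theorem~\ref{3h}. In general there is no reason the homology of a representation stable sequence of chain complexes should be multiplicity stable at the same threshold: while the multiplicities in each $A(n)_i$ stabilize at $N$, the multiplicities in the subspaces of cycles and of boundaries need not, so their quotient can fluctuate past $N$. The paper explicitly flags this (``the second statement is not completely immediate'') and proves Theorem~\ref{3h} to handle exactly the boundary case $|\nu|+\nu_1=N$. That argument uses the injectivity of $\phi_n$ together with the observation that, at the sharp stability point, a copy of $V(\nu)_n$ inside $\mathrm{Res}\,Z_i(n+1)$ can only arise from $V(\nu)_{n+1}$; this forces the cycle and boundary multiplicities to be nondecreasing in $n$, and combined with the chain-level equality $c^i_{\nu,n}=c^i_{\nu,n+1}$ they must be constant, hence so is the homology multiplicity. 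Your conjugation step (ii) is correct and matches the paper's translation, but it only reduces part~(2) to precisely the statement of Theorem~\ref{3h}; it does not bypass it.

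For part~(1) your conclusion is right but the route is unnecessarily indirect and still leans on the unjustified claim. Once $|\nu|+\nu_1=N>\lceil 3m/2\rceil$, chain-level representation stability at $\lceil 3m/2\rceil$ already forces the multiplicity of $V(\nu)_n$ in $B(g,n,n-\ell)_i\otimes V_{1^n}$ to vanish for every $n\geq \lceil 3m/2\rceil$, and hence in homology as well. No passage-to-homology statement is required there.
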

 These results can be rephrased in terms of usual multiplicity stability as defined in \cite{CF}, by tensoring with the alternating representation.  But we find it more natural here to work in this conjugate setting, stabilizing by adding boxes to the first column instead of the more standard first row.  Note, the statements of the corollary are formal, i.e.\ they would work for any representation stable sequence of chain complexes with the given sharp stability bound, although the second statement is not completely immediate, see Theorem $\ref{3h}$.

We then turn to the question of computing these stable multiplicities in our example.  Here we can additionally leverage the fact that in the sequence of chain complexes $B(g,n,n-\ell)$, the top dimension (dimension $m$ in our conventions) is the last to stabilize.  This both guarantees that additional multiplicities must be stable, some of which must moreover be $0$, as well as gives us an formula computing certain non-zero multiplicities.

\begin{theorem}\label{odd2}
	Fix $g,\ell$ and $m$ as above and let $\lambda = (\lambda_1\cdc \lambda_k)$ be a partition of $N=\lceil 3m/2 \rceil$.  
	\begin{enumerate}
		\item  If $k < \lceil m/2 \rceil$, the multiplicity of $(\lambda, 1^{n-N})$ in $H_i(B(g,n,n-\ell))$ is $0$ for all $i$.
		\item If $k = \lceil m/2 \rceil$ and $i<m$, the multiplicity of $(\lambda, 1^{n-N})$ in $H_i(B(g,n,n-\ell))$ is $0$.
		
		\item If $k = \lceil m/2 \rceil$,  the multiplicity of $(\lambda, 1^{n-N})$ in $H_m(B(g,n,n-\ell))$ is independent of $n$.
	\end{enumerate} 
	Moreover, in this latter case this multiplicity, call it $c_\lambda$, 	
	can be expressed as the following sum of Littlewood-Richardson coefficients:
	$$c_\lambda = \ds\sum_{p=0}^{g-1} ( \ds\sum_{\tau} N_{\lambda^\prime, 2\tau, p}),$$
	where $\tau$ is taken over all partitions of $(m-p)/2$ (if any), where $2\tau$ is the partition of $m-p$ formed by doubling the entries of $\tau$,  and where $\lambda^\prime$ is the partition of $m$ associated to the Young diagram found by erasing the first column of $\lambda$.
\end{theorem}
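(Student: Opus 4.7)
The plan is to combine the sharp stability bound from Theorem \ref{mainthmm} with a ``body size versus homological degree'' vanishing principle, which will let me isolate which irreducibles can appear in $H_i(B(g,n,n-\ell))$. After tensoring with $V_{1^n}$, the partition $(\lambda, 1^{n-N})$ becomes its conjugate $(k + n - N,\, \lambda'_2,\, \lambda'_3, \dots)$, so the body (non-first-row part) of the resulting stable partition has size exactly $N - k$. The hypothesis $k < \lceil m/2\rceil$ translates to body size strictly greater than $m$, while $k = \lceil m/2\rceil$ corresponds to body size exactly $m$.

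The key technical input is then the bound: in the $S_n$-equivariant chain complex $B_\bullet(g,n,n-\ell) \otimes V_{1^n}$, the multiplicity in $B_j$ of any irreducible with body size $s$ vanishes whenever $s > j$. Granting this, since the complex is concentrated in degrees $0, \dots, m$, part (1) is immediate. For $k = \lceil m/2\rceil$ (body size $m$), the bound forces the multiplicities in $B_j$ to vanish for $j<m$, hence $H_i$ vanishes in those degrees, yielding part (2). The same bound makes the boundary $\partial : B_{m-1} \to B_m$ invisible to the body-size-$m$ isotypic, so the $H_m$ multiplicity equals the chain-level multiplicity, which is stable in $n$ by Theorem \ref{mainthmm}, proving part (3).

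To derive the explicit formula for $c_\lambda$, I would next analyze the top-degree chains combinatorially. Maximizing the degree at $m = 3(g-1) + 2\ell$ forces each non-distinguished vertex to be trivalent. Stratifying by the number $p$ of loops at the distinguished vertex (these contribute to the genus count), the remaining structure consists of pairs of edges connecting the distinguished vertex to its trivalent complement, producing a partition $2\tau$ of $m - p$ doubled because the flags at the distinguished vertex come in matched pairs via the trivalent graph. The $S_n$-character of the body-size-$m$ isotypic then factors as a Littlewood-Richardson product of the leg-permutation data (indexed by $\lambda'$) against the internal structure (indexed by $2\tau$ and $p$), yielding the claimed sum. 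The range $0 \le p \le g - 1$ arises because at least one unit of genus must be realized outside the distinguished vertex in order to reach the top degree with the remaining edge budget.

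The main obstacle is the body size bound in the second step. Although analogues are familiar for configuration spaces and related FI-modules via Specht-module or character-theoretic arguments, the marked graph complex has two flavors of flags, nontrivial graph automorphisms at the distinguished vertex, and alternating-flag conventions, all of which must be tracked carefully to rule out body size exceeding degree. A secondary difficulty is the combinatorial bookkeeping in the final step: verifying the doubling $2\tau$, the parity constraint forcing $(m-p)/2$ to be an integer, and the sign contributions from the alternating convention on marked flags, so that the stated Littlewood-Richardson sum really does compute the character of the top chains.
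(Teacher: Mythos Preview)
Your plan for parts (1)--(3) is correct and is essentially the paper's argument, phrased in conjugate language. The paper works directly with $B(g,n,n-\ell)$ and counts \emph{rows}: every irreducible in $\widehat{A}_\xi(n)=A_\xi\circ V_{1^{n-k}}$ has at least $n-k$ rows, where $k$ is the number of legs of the core graph $\xi$; your ``body size $\le j$'' is exactly the conjugate of this combined with $k\le j$. Your worry that this bound is the main obstacle is misplaced: once one has the core-graph decomposition $B(g,n,n-\ell)_j\cong\bigoplus_\xi \widehat{A}_\xi(n)$ over core graphs of degree $j$ and type $(g,k,u)$, the inequality $k\le j$ is immediate from $j=|E_\xi|+k-u$ and $u\le |E_\xi|$ (every marking on a core graph sits on a distinct edge). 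One small slip: in the paper's homological convention the relevant differential is $d\colon B_m\to B_{m-1}$, not the other direction; but your conclusion that the body-size-$m$ isotypic of $H_m$ equals that of $B_m$ is correct, since $B_{m+1}=0$ and the body-size-$m$ part of $B_{m-1}$ vanishes.

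Your sketch of the formula, however, misidentifies the combinatorics. The extremal core graphs supporting body size exactly $m$ in degree $m$ are classified (Lemma~\ref{satlemma}) as the graphs $\theta_{g,\ell}(p)$, and these have \emph{no} tadpoles: the parameter $p$ counts neutral trivalent vertices joined to the distinguished vertex by a pair of parallel edges (plus one leg), not loops. The doubling $2\tau$ does not come from ``pairs of edges'' but from the $y=(m-p)/2$ neutral vertices each carrying a pair of legs, on which the wreath product $S_2\wr S_y$ acts trivially; inducing the trivial representation of $S_2\wr S_y$ up to $S_{2y}$ yields exactly the sum over even-part partitions $2\tau$. The constraint $p\le g-1$ (and the parity condition $p\equiv g-1\pmod 2$, which your formula needs but you do not mention) comes from the genus count $g-1=p+2t$ with $t\ge 0$ counting the vertices joined by three parallel edges, not from ``one unit of genus outside the distinguished vertex.'' Finally, the sign worry is a non-issue: the $I_\theta$-action on $\det(\theta)_{\mathrm{Aut}(\theta)}$ is trivial because it permutes only marked edges (degree $1-1=0$) and unmarked legs.
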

Here the notation $N_{\lambda^\prime, 2\tau, p}$ denotes the multiplicity of $\lambda^\prime$ in the induced representation $V_{2\tau}\circ V_p$.  We remark that the inner sum is $0$ unless $m$ and $p$ have the same parity and $m\geq p$, although we do allow the empty partition of $0$.  See Section $\ref{stabsec}$ for more detail.

Here is an example. Let $(g,n,r) = (7,8,15)$, we first confirm that this triple lies in the stable range of Theorem $\ref{mainthmm}$, since $n=8 \geq \lceil 3m/2\rceil=6$.  In addition to identifying many multiplicities which must be $0$, Theorem $\ref{odd2}$ tells us the following non-zero multiplicities:
$$3V_{5,1,1^2}\oplus V_{4,2,1^2}\oplus 2V_{3,3,1^2}\hookrightarrow H_{4}(B(7,8,15)).$$
For comparison Burkhardt \cite{Burk} computes $H_{4}(B(7,4,11))\cong 2V_{2,2}$ (in our degree conventions).  So, as expected, the homology of $B(7,8,15)$ can not be viewed as a formal consequence of the homology of $B(7,4,11)$.  In particular we can't deduce $\mathsf{gr}_{15} H_c^\ast(\op{M}_{7,8})$ from $\mathsf{gr}_{11} H_c^\ast(\op{M}_{7,4})$, or vice versa, since the latter corresponds to a triple $(g,n,r)$ falling outside of the stable range guaranteed by Theorem $\ref{mainthmm}$.

Note that while this excess $m=4$ example is close to, if not within, the range of the computations which can be done completely explicitly, our results hold for all $m$.  See Section $\ref{stabsec}$ for more examples.

\subsection{Representation stability of Whitehouse modules.}

If the graph complexes $B(g,n,r)$ were only of interest in cases $r=11$ and $r=15$, then our Theorem $\ref{mainthmm}$ might be of limited interest, since it applies only to finite $(g,n)$ for a fixed $r$.  However, the graph complexes $B(g,n,r)$ are of interest for all $r$.  To give a first indication of this, we consider the case when $g=1$.

 Write $C(\mathbb{R}^3,n)$ for the configuration space of $n$ distinct labeled points in $\mathbb{R}^3$.  Note that this space has (rational) cohomology only in even degrees.  The continuous map which forgets the last point yields a representation stable sequence $\dots \to H^{2i}(C(\mathbb{R}^3,n-1))\to H^{2i}(C(\mathbb{R}^3,n)) \to \dots$ in each degree, and Hersh and Reiner \cite{HR} proved that this sequence stabilizes sharply at $3i$.

The family of $S_n$-modules $H^{2i}(C(\mathbb{R}^3,n))$ is the restriction of a family of $S_{n+1}$-modules, introduced by Whitehouse in \cite{Whitehouse}, see Early and Reiner \cite{ER}.  We use the notation $\mathsf{W}_{n,k}$ for the $S_n$-module whose restriction to an $S_{n-1}$-module coincides with $H^{2(n-k)}(C(n-1,\mathbb{R}^3))\tensor V_{1^{n-1}}$.  
In this paper we compute:

\begin{theorem} \label{whthm} Let $r\geq 2$.  There is an isomorphism of $S_n$-modules
	$$H_i(B(1,n,r)) \cong \begin{cases} \mathsf{W}_{n,r-1} & \text{ if } i= 2(n-r) \\ 0 & \text{else} \end{cases}$$
\end{theorem}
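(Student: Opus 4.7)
The natural approach is to exploit the constrained combinatorics of genus-one graphs to obtain an explicit chain-level description of $B(1,n,r)$. Since a genus-one graph has first Betti number one, it contains a single independent loop. I would first stratify the complex by the position of this loop relative to the distinguished vertex $v_0$, separating the case where the loop is a self-edge based at $v_0$ from the case where it is a polygon of length at least two (which may or may not pass through $v_0$).

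Next, I would attempt to kill ``inessential'' pieces via standard graph-complex acyclicity arguments. Self-loops, multi-edges, and bivalent unmarked vertices typically give rise to acyclic subcomplexes or direct vanishings in graph complexes of this type. After passing to the reduced quotient, what should remain is a small complex whose generators are polygons decorated by legs and marked flags in rigidly constrained ways. The genus-one constraint forces a tight relationship between the number of legs $n$, the number of marked flags $r$, and the homological degree of a generator, which is what causes the homology to concentrate in the single degree $i = 2(n-r)$; the vanishing for $i \neq 2(n-r)$ should then reduce to a dimension count on the reduced complex.

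Finally, to identify the surviving homology as the Whitehouse module $\mathsf{W}_{n,r-1}$, I would compare the reduced complex to the standard Orlik--Solomon / Arnol'd presentation of $H^*(C(\R^3, n-1))$. In the configuration-space picture, the distinguished vertex $v_0$ plays the role of a base point, $n-1$ of the ordinary legs correspond to the labeled configuration points, and the alternating convention on marked flags supplies the twist by $V_{1^{n-1}}$. Reading off the $S_{n-1}$-module structure should recover $H^{2(n-r)}(C(\R^3, n-1)) \otimes V_{1^{n-1}}$, and verifying that the extra $S_n$-action from the full leg symmetry agrees with the Whitehouse extension completes the identification.

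The main obstacle is step two: verifying that the contracting homotopies on the various ``inessential'' subcomplexes are mutually compatible and $S_n$-equivariant, and that the resulting reduced complex matches a known configuration-space model exactly, with correct degree, sign, and orientation conventions. The hypothesis $r \geq 2$ is presumably needed here to avoid degenerate low-$r$ configurations where the marked-flag data would collapse the complex, and is consistent with the Whitehouse module being defined only once there are enough marked flags to support the configuration-space interpretation.
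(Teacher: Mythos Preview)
Your proposal rests on a misreading of the paper's conventions. In Definition~\ref{markedgraphdef} the genus is defined as $g = \beta_\gamma + 1$, so a marked graph of type $(1,n,r)$ has first Betti number $\beta = 0$: it is a \emph{tree}, not a graph with a single independent loop. (The shift by $+1$ is explained just after the definition: the distinguished vertex is regarded as carrying genus $1$.) Your proposed stratification by the position of ``the loop'' relative to $v_0$, and the subsequent reduction to ``polygons decorated by legs,'' therefore has no content in this setting; there is no cycle to stratify over, and the acyclicity arguments you describe for self-loops and polygons never get off the ground.

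The paper's argument is quite different. Since the underlying graphs are trees, one designates the leg labeled $n$ as a root, giving every tree a directed structure and a well-defined path from the distinguished vertex to the root. A filtration built from the length of this path and the valence of the distinguished vertex isolates an acyclic subcomplex; the quotient is spanned by trees in which the distinguished vertex is the root vertex and has valence exactly $r$. That quotient decomposes over partitions of $\{1,\dots,n-1\}$ into $r-1$ blocks, and each block contributes a copy of $Lie(b_i)\otimes sgn$ via the bar construction of the commutative operad. Matching this against the Cohen/Lehrer--Solomon description of $H^{2(n-r)}(C(\mathbb{R}^3,n-1))$ as an induced representation gives the $S_{n-1}$-module structure (Theorem~\ref{n-1}).

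Your last step, lifting from $S_{n-1}$ to $S_n$, is also where the paper does real work that your outline glosses over. ``Verifying that the extra $S_n$-action agrees with the Whitehouse extension'' is not automatic: two $S_n$-modules with isomorphic $S_{n-1}$-restrictions need not be isomorphic. The paper instead establishes the recursion
\[
\downarrow H_i(B(1,n+1,r)) \cong H_i(B(1,n,r-1)) \oplus \bigl(H_{i-2}(B(1,n,r)) \otimes V_{n-1,1}\bigr)
\]
by a character computation (splitting on whether $\sigma \in S_n$ has a fixed point), and then invokes the fact that this recursion characterizes the Whitehouse modules.
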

Theorem $\ref{mainthmm}$ may then be  applied to deduce that the Whitehouse modules $\mathsf{W}_{n,n-\ell}\tensor sgn_n$ form a representation stable sequence, stabilizing at $n=3\ell$.  Furthermore, Theorem $\ref{odd2}$ shows this stability is witnessed by an injection $V_{3^\ell}\hookrightarrow \mathsf{W}_{n,n-\ell}$. It follows that the sequence of restrictions $\downarrow \mathsf{W}_{n,n-\ell}$ stabilizes when $n=3\ell+1$, hence $n-1 =3\ell$, recovering the result of Hersh and Reiner \cite[Theorem 1.1]{HR}.

 This computation has a precursor in \cite{WardStir}, which calculated the homology of a chain complex which arose from considering the Feynman transform of Lie graph homology.  We called said complexes ``Stirling complexes'' because their betti numbers were given by Stirling numbers of the first kind.  These Stirling complexes turn out to be quasi-isomorphic to $B(1,n,r)$, but working with the purely combinatorial resolution  $B(g,n,r)$ is advantageous, particularly when computing the character.
 
 \subsection{Outlook} We became interested in studying the Feynman transform of Lie graph homology due to its role in a spectral sequence converging to commutative graph homology constructed in \cite{WardMP}.  When $g=1$ this spectral sequence degenerates at the $E_1$ page, and combining Theorem $\ref{whthm}$ with the results of \cite{WardMP}, \cite{WardStir} allows us to describe commutative graph homology in terms of Whitehouse modules.  Borrowing notation for $\Delta_{g,n}$ from \cite{CGP} we find: 

\begin{corollary}\label{delta2}  The unique non-zero homology group of $\Delta_{1,n}$ is isomorphic to
	$\ds\bigoplus_{k \text{ even} }\mathsf{W_{n,k}}.$
\end{corollary}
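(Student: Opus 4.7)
The plan is to combine three inputs: (i) the CGP identification of the reduced homology of $\Delta_{1,n}$ with commutative graph homology in genus $1$, (ii) the spectral sequence of \cite{WardMP} which converges from the Feynman transform of Lie graph homology to commutative graph homology and which, as noted in the Outlook, degenerates at $E_1$ when $g=1$, and (iii) Theorem \ref{whthm}, applied to each piece of the $E_1$ page after using the quasi-isomorphism between Stirling complexes and $B(1,n,r)$ recalled at the end of the subsection preceding the Outlook.

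First I would unpack the $g=1$ case of the spectral sequence. Since it degenerates at $E_1$, commutative graph homology in genus $1$ with $n$ legs is a direct sum, indexed by the number $r$ of marked flags, of the homologies of the corresponding Feynman-transform summands. Via \cite{WardStir}, each such summand is quasi-isomorphic to $B(1,n,r)$, so Theorem \ref{whthm} identifies its contribution as a single copy of $\mathsf{W}_{n,r-1}$ concentrated in one homological degree. Translating back through the CGP degree shift, all of these contributions land in the single non-vanishing degree of $\Delta_{1,n}$, consistent with the statement that the homology is concentrated in one degree.

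Second I would extract the parity condition. Writing $k = r-1$, the question is which values of $k$ yield a non-zero summand on the $E_1$ page. The Feynman transform of the Lie operad carries suspensions/desuspensions reflecting the odd parity of Lie, together with the alternating convention on marked flags used throughout this paper; tracing these conventions as in \cite{WardMP, WardStir} forces exactly one parity of $k$ to survive, and this parity turns out to be the even one. Assembling the surviving summands yields the direct sum $\bigoplus_{k \text{ even}} \mathsf{W}_{n,k}$ claimed in the statement.

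The principal obstacle is reconciling the degree and sign conventions across \cite{WardMP}, \cite{WardStir}, \cite{CGP}, and the present paper, in particular pinning down rigorously that it is the even $k$ (rather than odd $k$) that contribute; this is essentially a bookkeeping task but one where a sign error would alter the answer. A secondary point, automatic here but worth verifying, is that $E_1 = E_\infty$ produces an honest direct sum decomposition of $H_\ast(\Delta_{1,n})$ rather than merely an associated graded: this is immediate because all summands sit in the same total degree, precluding non-trivial extensions.
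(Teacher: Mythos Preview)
Your outline matches the paper's own justification: the corollary is stated in the Outlook as an immediate consequence of combining Theorem~\ref{whthm} with the results of \cite{WardMP} and \cite{WardStir} (the $E_1$-degeneration in genus $1$ and the quasi-isomorphism with the Stirling complexes), together with the CGP identification of $\tilde{H}_\ast(\Delta_{1,n})$ with commutative graph homology. The paper does not spell out the parity bookkeeping or the extension argument any further than you do, so your proposal is at least as detailed as the paper's treatment.
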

From this perspective our interest in the graph complexes $B(g,n,r)$ goes beyond the above mentioned cases, i.e.\ beyond $g=1$, $r=11$ or $r=15$, because the relationship to commutative graph homology extends beyond the genus $g=1$ case.  In particular, for arbitrary genus there is a spectral sequence computing commutative graph homology for which the homology of $B(g,n,r)$ appears as a portion of the $E_1$ page, but with additional contributions corresponding to multiple distinguished vertices, each with potentially higher genus.  Since the differentials in this spectral sequence are equivariant, our results, particularly knowledge of the stable multiplicities, will inform future analysis of this spectral sequence.

 Of particular interest would be to make a connection with the topological constructions of Gadish and Hainaut \cite{GH} who describe a large subspace of the homology of $\Delta_{2,n}$ in terms of the Whitehouse modules.  On the one hand, an interpretation of their work in genus $1$ could be used to give a topological interpretation of Corollary $\ref{delta2}$.  On the other hand, our derivation of this result purely in terms of graph complexes may point toward a possible graph complex interpretation of their genus $2$ results.

\tableofcontents

\subsection{Acknowledgment} BW would like to thank 
V.\ Dotsenko, N. Gadish, L. Hainaut, S.\ Payne, D.\ Petersen and T. Willwacher, for correspondence and conversations related to these results, and would also like to thank the Simons Foundation for their support via Simons Collaboration Grant no.\ 704658.

\section{Representation stability on chains and passage to homology.}\label{repstabsec}

In this section we establish the needed prerequisites regarding representation stability.  We first adopt the following conventions and notation.  We work over the field $\mathbb{Q}$ of rational numbers throughout.  We write $S_n$ for the symmetric group of permutations of $\{1\cdc n\}$.  We view $S_n\subset S_{n+1}$ in the usual fashion (the subgroup of permutations fixing $n+1$).

By a partition of $n$ we mean a finite sequence of non-increasing, positive integers which sum to $n$.  By convention we allow the empty partition of $0$.  Given a partition $\lambda= (\lambda_1\cdc \lambda_k)$ of $n$, we define $|\lambda|:=n= \sum \lambda_i$.   
We often refer to an entry $\lambda_i$ as a ``block'' of the partition $\lambda$.  We use the notation $1^n$ to denote the partition of $n$ consisting of $n$ blocks of size $1$.  We use the notation $(\lambda, 1^n)$ to denote the partition of $|\lambda|+n$ formed by adding $n$ blocks of size $1$ to $\lambda$.

Recall that for $n\geq 1$, the irreducible representations of the symmetric group $S_n$ are indexed by partitions of $n$.  We write $V_\lambda$ for the irreducible representation associated to $\lambda$, as is standard \cite{FH}.  Given a partition $\lambda= (\lambda_1\cdc \lambda_k)$ and an integer $n \geq |\lambda|+\lambda_1$ we define the $S_n$ representation $V(\lambda)_n$ to be
$$
V(\lambda)_n := V_{(n-|\lambda|,\lambda_1,\lambda_2,\cdc \lambda_k)}
$$
We write $V(\lambda)$ for the family of all such representations.

\subsection{Recollection of representation stability}

To begin we recall the definition of representation stability, after Church and Farb \cite{CF}.  In this article we only consider the uniform variant of representation stability, see \cite[Definition 2.6]{CF}.    First recall that a consistent sequence of symmetric group representations is the data of a representation $A(n)$ of the group $S_n$ for each $n\geq 1$, along with $S_n$-equivariant maps $\phi_n\colon A(n)\to \text{Res} (A(n+1))$ for each $n$.

\begin{definition}\label{rsdef}  Let $A=\{A(n),\phi_n\}$ 
	be a consistent sequence of symmetric group representations and let $N$ be a natural number.  We say $A$ is (uniformly) 
	representation stable at $N$ if $n\geq N$ implies:
	\begin{enumerate}
		\item[(1)] $\phi_n$ is injective.
		\item[(2)] $A(n+1) = \mathbb{Q}[S_{n+1}]\cdot \phi_n(A(n))$
		\item[(3)] If $A(n) \cong \bigoplus c_{\lambda, n} V(\lambda)_n$ then $A(n+1) \cong \bigoplus c_{\lambda, n} V(\lambda)_{n+1}$, where the integer $c_{\lambda,n}$ denotes the multiplicity of the irreducible representation $V(\lambda)_n$ appearing in $A(n)$. 
	\end{enumerate}
 Furthermore, we say $A$ {\it stabilizes sharply} at $N$ if $A$ is representation stable at $N$ and is not representation stable at $N-1$.	
\end{definition}


To be precise, we view both summands in axiom (3) as being indexed over those partitions $\lambda$ where $c_{\lambda,n}$ is formally defined.  Thus, given a partition $\lambda=(a_1\cdc a_k)$, the content of axiom (3) says that when $n\geq N$ the integer $c_{\lambda,n}$ may be determined as follows.  First one asks: is $a_1+|\lambda|\leq N$?  If the answer is no then $c_{\lambda,n}=0$.  If the answer is yes then $c_{\lambda,n}=c_{\lambda,N}$.   Given a representation stable sequence which stabilizes sharply at $N$, we define the integer $c_\lambda:=c_{\lambda,N}$ and refer to $c_\lambda$ as the stable multiplicity of $V(\lambda)$.

\begin{remark}\label{nomaps}  Note that axiom (3) in the definition of representation stability doesn't invoke the maps.  Given a symmetric sequence $A=\{A(1),A(2),...\}$ we say the coefficients of $A$ stabilize at $N$ if axiom (3) is satisfied for $n\geq N$.  In such a situation we may also make reference to the stable coefficients $c_\lambda$ as above.  We call this phenomenon ``multiplicity stability'' after \cite{CF}.  Specifically, this is the uniform variant of multiplicity stability, see \cite[Definition 2.7]{CF}.
\end{remark}

\subsubsection{Induced sequence.}\label{indss}
The following class of examples of representation stable sequences will be important for us in what follows.  Let $X$ be an $S_m$-module.  We write $X\circ V_n = Ind_{S_m\times S_n}^{S_{n+m}}(X\boxtimes V_n)$.  Define a consistent sequence $\vec{X}$ by defining 
$$\vec{X}(n)=
\begin{cases}
	X & \text{ if } n=m \\ 
X\circ V_{n-m} & \text{ if } n>m \\ 
0 & \text{else}
\end{cases}
$$
along with maps induced by the $S_n$-equivariant isomorphism $V_{n}\to Res(V_{n+1})$.

Define $w(X)$, the width of $X$, to be the largest $w \leq m$ for which there exists an injective equivariant map $V_w \to Res^{S_n}_{S_w}(X)$.  The width may be equivalently described as the largest number appearing in a partition associated to an irreducible subspace of $X$, or equivalently as the maximum number of columns appearing in a Young diagram associated to an irreducible subspace of $X$.

\begin{lemma}\label{stablem} Given  an $S_m$ representation $X$, the consistent sequence
	$\vec{X}$ is representation stable and stabilizes sharply at $n=m+w(X).$
\end{lemma}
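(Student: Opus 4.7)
The plan is to reduce to the irreducible case and invoke Pieri's rule. Writing $X = \bigoplus_\mu c_\mu V_\mu$, summed over partitions $\mu$ of $m$, gives $\vec{X}(n) = \bigoplus_\mu c_\mu (V_\mu \circ V_{n-m})$ for $n \geq m$, and Pieri's rule yields
$$
V_\mu \circ V_{n-m} \;=\; \bigoplus_\nu V_\nu,
$$
where $\nu$ runs over partitions of $n$ with $\nu \supseteq \mu$ and $\nu/\mu$ a horizontal strip, equivalently $\nu_i \geq \mu_i \geq \nu_{i+1}$ for all $i$. Reparametrizing via $\nu = (n - |\lambda|, \lambda_1, \ldots, \lambda_k)$, the multiplicity of $V(\lambda)_n$ in $V_\mu \circ V_{n-m}$ equals $1$ when
$$
\mu_1 \leq n - |\lambda|, \qquad \mu_1 \geq \lambda_1, \qquad \lambda_{i-1} \geq \mu_i \geq \lambda_i \ \text{ for } i \geq 2,
$$
and equals $0$ otherwise.

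Only the first inequality depends on $n$, and it becomes vacuous for every contributing $\mu$ once $n - |\lambda| \geq w(X)$. Any $\lambda$ appearing in $\vec{X}(n)$ satisfies $|\lambda| = \sum_{i \geq 2} \nu_i \leq \sum_{i \geq 1} \mu_i = m$, so every multiplicity $c_{\lambda, n}$ has stabilized once $n \geq m + w(X) =: N$, giving axiom (3) of Definition \ref{rsdef} at $N$.

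Axioms (1) and (2) will hold for all $n \geq m$ by a Mackey decomposition on the double cosets $S_n \backslash S_{n+1} / (S_m \times S_{n+1-m})$, which identifies $\text{Res}^{S_{n+1}}_{S_n} \vec{X}(n+1)$ as $\vec{X}(n)$ plus a second summand (from the coset that places $n+1$ into the $X$-factor). Then $\phi_n$ is the inclusion into the first summand, hence injective, and translating the generating subspace $X \boxtimes V_{n-m} \subset \vec{X}(n)$ by $S_{n+1}$ sweeps out $X \boxtimes V_{n+1-m} \subset \vec{X}(n+1)$, so the $S_{n+1}$-orbit of $\phi_n(\vec{X}(n))$ generates the target.

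For sharpness at $N$, I will pick a $\mu$ with $c_\mu \neq 0$ achieving $\mu_1 = w(X)$ and set $\lambda := \mu$. Then $|\lambda| + \lambda_1 = N$, so $V(\lambda)_n$ is undefined for $n < N$, while at $n = N$ the three Pieri conditions are met with equalities $\mu_i = \lambda_i$, producing $V(\lambda)_N$ with multiplicity at least $c_\mu \geq 1$. This new irreducible family appearing only at $N$ violates axiom (3) at $N - 1$, yielding sharpness. The principal bookkeeping subtlety is the reparametrization from Pieri's native $\nu$-indexing into the $V(\lambda)$-parametrization; once the bounds $\lambda_1 \leq w(X)$ and $|\lambda| \leq m$ are recorded, $N = m + w(X)$ drops out as the universal stability point and the explicit $\lambda = \mu$ witnesses sharpness.
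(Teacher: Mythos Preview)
Your proof is correct and follows essentially the same approach as the paper, which simply states that the lemma ``follows immediately from the Littlewood-Richardson rule.'' You have spelled out the details: Pieri's rule is precisely the relevant special case of Littlewood-Richardson for inducing against the trivial representation $V_{n-m}$, and your explicit reparametrization, your Mackey argument for axioms (1) and (2), and your sharpness witness $\lambda=\mu$ constitute a careful unpacking of what the paper leaves implicit.
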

\begin{proof}
	This follows immediately from the Littlewood-Richardson rule.
\end{proof}

\subsubsection{Direct sum.}

\begin{lemma}\label{directsumlem}  Let $X_i$ be a representation of $S_{m_i}$ and let $q_i := w(X_i)+m_i$. 
	Define the consistent sequence $\vec{X} = \oplus_i \vec{X}_i$.  Then $\vec{X}$ is representation stable and stabilizes sharply at $\text{max}_i\{q_i\}$. 
\end{lemma}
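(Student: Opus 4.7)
My plan is to establish (a) representation stability at $N := \max_i q_i$ and (b) failure of representation stability at $N-1$, and to conclude sharpness. Stability at $N$ will follow because the three axioms of Definition~\ref{rsdef} all pass through direct sums; sharpness at $N$ will be inherited from sharpness of any one summand $\vec{X}_{i_0}$ with $q_{i_0} = N$.

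For stability at $N$: the stabilization map is $\phi_n = \bigoplus_i \phi_n^{(i)}$, which is injective whenever each summand map is, yielding axiom (1) for $n \geq N$. Axiom (2) holds because the $\mathbb{Q}[S_{n+1}]$-action respects the direct-sum decomposition, so the $S_{n+1}$-span of $\phi_n\bigl(\vec{X}(n)\bigr)$ is the direct sum of the $S_{n+1}$-spans of the $\phi_n^{(i)}\bigl(\vec{X}_i(n)\bigr)$, each of which fills its summand once $n \geq q_i$. Axiom (3) is immediate, since multiplicities add, $c_{\lambda,n}^{\vec{X}} = \sum_i c_{\lambda,n}^{\vec{X}_i}$, and each summand multiplicity is already stable for $n \geq q_i$.

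For sharpness, fix any $i_0$ with $q_{i_0} = N$, so $\vec{X}_{i_0}$ fails representation stability at $N-1$ by Lemma~\ref{stablem}. Each $\phi_n^{(i)}$ in the induced sequence construction is injective for every $n$ (it is induced from the inclusion $V_n \hookrightarrow \text{Res}(V_{n+1})$ by the exact functor of induction), so the failure must appear in axiom (2) or (3). I would then establish the following monotonicity claim: for every partition $\lambda$ and every $i$, the sequence $n \mapsto c_{\lambda,n}^{\vec{X}_i}$ is weakly non-decreasing. This follows from the Pieri rule, since for each irreducible constituent $V_\mu$ of $X_i$, the multiplicity of $V(\lambda)_n$ in $V_\mu \circ V_{n-m_i}$ equals $0$ or $1$ according to whether the skew shape $(n-|\lambda|,\lambda_1,\ldots,\lambda_k)/\mu$ is a horizontal strip, a condition that persists once it is satisfied. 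Given monotonicity, sharp failure for $\vec{X}_{i_0}$ supplies some $\lambda^*$ with $c_{\lambda^*,N}^{\vec{X}_{i_0}} > c_{\lambda^*,N-1}^{\vec{X}_{i_0}}$, while every other summand contributes $c_{\lambda^*,N}^{\vec{X}_i} \geq c_{\lambda^*,N-1}^{\vec{X}_i}$; summing over $i$ yields a strict inequality and axiom (3) fails for $\vec{X}$ at $N-1$.

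The main obstacle is precisely the potential cancellation that monotonicity rules out: without it, two summands with $q_i = N$ could in principle have opposing jumps in the multiplicity of some $V(\lambda)$ across the threshold, causing axiom (3) to hold spuriously for the direct sum. An alternative, more combinatorially explicit route would be to exhibit a specific witness, e.g.\ a $V(\lambda)$ with $|\lambda| = m_{i_0}$ and $\lambda_1 = w(X_{i_0})$, and verify via the Pieri rule that its multiplicity in $\vec{X}_i(N-1)$ vanishes for every $i$ but is positive in $\vec{X}_{i_0}(N)$.
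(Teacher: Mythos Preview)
Your proposal is correct and follows essentially the same approach as the paper's proof: both establish stability at $N=\max_i q_i$ by passing the three axioms through the direct sum, and both establish sharpness by invoking monotonicity of the multiplicities $n\mapsto c_{\lambda,n}^{\vec{X}_i}$ in each summand (you via the Pieri rule, the paper via the Littlewood--Richardson rule, of which Pieri is the relevant special case) to rule out cancellation. Your write-up is considerably more explicit than the paper's, which dispatches the lemma in three sentences; the one small wrinkle is that you note the failure at $N-1$ for $\vec{X}_{i_0}$ could be in axiom (2) or (3) but then only treat the axiom (3) case---this is harmless, since a failure of axiom (2) in a summand passes directly to the direct sum without any cancellation concern (the $S_{n+1}$-span respects the decomposition), and in fact axiom (2) always holds for induced sequences anyway.
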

\begin{proof}
	The fact that a direct sum of representation stable sequences is representation stable is straight-forward.  The fact that a direct sum of representation stable sequences stabilizes at or before the last of its constituent pieces is also straight-forward.  
	
	The only concern, then, is to establish that it doesn't stabilize before hand.  This follows from the fact that the multiplicity of each $V(\lambda)$ is non-decreasing in each $\vec{X}_i$ -- even before hitting the stabilization point, again invoking the Littlewood-Richardson rule.
\end{proof}

\subsubsection{Transposition} \label{altsec}
	The	alternating (aka sign) representations form a consistent sequence via the isomorphism $ V_{1^n} \stackrel{\iota_n}\to Res_{S_n}^{S_{n+1}}(V_{1^{n+1}})$.  This sequence is not representation stable.  However, the induced sequences which appear most naturally for us will be formed by tensoring with this consistent sequence.  We thus introduce the following notation. If $A=\{A(n),\phi_n\}$ is a consistent sequence we define $A^\top$ to be the consistent sequence 
	$$
	\dots\to	A(n)\tensor V_{1^n}\stackrel{\phi_n\tensor \iota_n}\longrightarrow A(n+1)\tensor V_{1^{n+1}}\to\dots.
	$$
	We say that $A$ and $A^\top$ are {\bf conjugate} sequences.  If will be convenient to introduce the following shorthand terminology:
	
	\begin{definition}\label{conjstab}  We say a consistent sequence $A$ is conjugate stable, stabilizing sharply at $n$, if the sequence $A^\top$ is representation stable, stabilizing sharply at $n$.
\end{definition}

\begin{definition}\label{widecheck}
	Given an $S_m$-module $X$, 
	we define the consistent sequence $\widehat{X}:=(\overrightarrow{X\tensor V_{1^m}})^\top$. 
\end{definition}
In particular 
$$\widehat{X}(n)=
\begin{cases}
	X & \text{ if } n=m \\ 
	X\circ V_{1^{n-m}} & \text{ if } n>m \\ 
	0 & \text{else}
\end{cases}
$$
Define $\rho(X) = w(X\tensor V_{1^m})$, i.e. $\rho(X)$ is the maximum number of rows appearing in a Young diagram associated to an irreducible summand of $X$.  Lemma $\ref{stablem}$ immediately implies:
\begin{lemma}\label{rhos}  Given an $S_m$-module $X$, the consistent sequence $\widehat{X}$ is conjugate stable, stabilizing sharply at $n=m+\rho(X)$.
\end{lemma}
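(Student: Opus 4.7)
The plan is to reduce Lemma \ref{rhos} to Lemma \ref{stablem} by showing that the conjugate $(\widehat{X})^\top$ is exactly the induced sequence $\overrightarrow{X\tensor V_{1^m}}$. Since $\widehat{X}$ is defined as $(\overrightarrow{X\tensor V_{1^m}})^\top$, the key observation is that the operation $A\mapsto A^\top$ is involutive up to canonical isomorphism: $(A^\top)^\top(n) = A(n)\tensor V_{1^n}\tensor V_{1^n} \cong A(n)$, with connecting maps that match under the isomorphism $V_{1^n}\tensor V_{1^n}\cong \mathbb{Q}$. So $(\widehat{X})^\top \cong \overrightarrow{X\tensor V_{1^m}}$ as consistent sequences.

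Next I would verify this identification at the level of each $n>m$ directly to make sure the degree counting is correct. The formula $\widehat{X}(n) = X\circ V_{1^{n-m}}$ gives
\[
(\widehat{X})^\top(n) = \mathrm{Ind}_{S_m\times S_{n-m}}^{S_n}\bigl(X\boxtimes V_{1^{n-m}}\bigr)\tensor V_{1^n},
\]
and the push-pull identity, together with the restriction $V_{1^n}\downarrow_{S_m\times S_{n-m}} \cong V_{1^m}\boxtimes V_{1^{n-m}}$, rewrites this as
\[
\mathrm{Ind}_{S_m\times S_{n-m}}^{S_n}\bigl((X\tensor V_{1^m})\boxtimes V_{n-m}\bigr) = \overrightarrow{X\tensor V_{1^m}}(n).
\]
This is the only nontrivial calculation in the proof, and it is routine.

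Finally I would apply Lemma \ref{stablem} to $Y := X\tensor V_{1^m}$. That lemma gives sharp representation stability of $\vec{Y}$ at $n = m + w(Y)$. Since $\rho(X)$ is defined as $w(X\tensor V_{1^m}) = w(Y)$, we conclude that $(\widehat{X})^\top = \vec{Y}$ is representation stable, stabilizing sharply at $n = m + \rho(X)$, which by Definition \ref{conjstab} is exactly the statement that $\widehat{X}$ is conjugate stable with the same sharp bound. The only place where care is needed is that the sharpness transfers through the conjugation equivalence, which is automatic since conjugation is an isomorphism of consistent sequences.
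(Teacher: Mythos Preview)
Your proof is correct and follows exactly the approach the paper intends: the paper simply says ``Lemma~\ref{stablem} immediately implies'' the result, and you have spelled out that immediate implication by noting that $(\cdot)^\top$ is involutive so $(\widehat{X})^\top\cong\overrightarrow{X\tensor V_{1^m}}$, and then applying Lemma~\ref{stablem} with $w(X\tensor V_{1^m})=\rho(X)$. Your push--pull verification is a helpful sanity check but not strictly needed given Definition~\ref{widecheck}.
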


\subsection{Representation Stability for Chain Complexes.}

We now lift the definition of representation stability from vector spaces to chain complexes.  This is essentially straight-forward.  

First, by a chain complex of $S_n$-modules we mean a chain complex $(A_\bullet,d)$ along with an action of $S_n$ on each vector space $A_i$ for which each $d_i$ is equivariant.  By a consistent sequence of symmetric group representations in the category of chain complexes we mean a chain complex of $S_n$-modules $A(n)$ for every $n\geq 1$, along with a sequence of chain maps $\phi_n\colon A(n)\to A(n+1)$ which is equivariant in each degree.  Specifically this means $\phi_{n,i}\colon A(n)_i \to Res (A(n+1)_i)$ is $S_n$-equivariant for all $i$, where $\phi_{n,i}$ denotes the restriction of $\phi_n$ to degree $i$.

A consistent sequence of chain complexes gives rise to a consistent sequence in each degree and we may ask if such a sequence is stable in the sense of Definition $\ref{rsdef}$.  With this in mind we define:

\begin{definition}\label{rsdefch}  Let $A=\{A(n),\phi_n\}$ be a consistent sequence of symmetric group representations in the category of chain complexes and let $N$ be a natural number.  We say $A$ is representation stable at $N$ if each consistent sequence
$$A_i=\{...\to A(n)_i\stackrel{\phi_{n,i}}\to A(n+1)_i\to ...\}$$	
is representation stable at $N$ in the sense of Definition $\ref{rsdef}$.
\end{definition}

\subsubsection{Passage to homology and multiplicity stability} In general, there is no reason to expect that the homology of a representation stable sequence of chain complexes will itself be representation stable, or even that it would be multiplicity stable. However, as we now show, if we know the sharp stable bound of a representation stable sequence of chain complexes then there are at least {\it some} multiplicities which are guaranteed to stabilize on homology.

\begin{theorem}\label{3h}  Fix $\{A(n),\phi_n\}$, a representation stable sequence of chain complexes which stabilizes sharply at $N$ and let $\lambda = (\lambda_1\cdc \lambda_k)$ be a partition satisfying $|\lambda|+\lambda_1 =N$.  
Then for each degree $i$, the multiplicity of $V(\lambda)_n$ in $H_i(A(n))$ is stable (aka constant) for $n\geq N$. 
\end{theorem}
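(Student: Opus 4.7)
The plan is to recast the problem as a statement about the induced chain complex on multiplicity spaces and to argue that this complex stabilizes canonically once $n\geq N$.  For each $n\geq N$ and each degree $j$, set
$$U_n^j := \mathrm{Hom}_{S_n}(V(\lambda)_n, A(n)_j).$$
By representation stability at the chain level, $\dim U_n^j = c_{\lambda,j}$ is independent of $n\geq N$.  Since each differential $d:A(n)_{j+1}\to A(n)_j$ is $S_n$-equivariant it preserves the $V(\lambda)_n$-isotypical component $V(\lambda)_n\otimes U_n^j$, inducing differentials $\bar d_n^j : U_n^{j+1}\to U_n^j$; the dimension of the homology of this chain complex equals the multiplicity of $V(\lambda)_n$ in $H_j(A(n))$.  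So it suffices to exhibit isomorphisms of chain complexes $U_n^\bullet \cong U_{n+1}^\bullet$ for all $n\geq N$.

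The central observation, isolating the role of the hypothesis $|\lambda|+\lambda_1 = N$, concerns the $S_n$-restriction of $A(n+1)_j$.  By the branching rule, $V(\lambda)_n$ occurs in $V(\mu)_{n+1}|_{S_n}$ precisely when the Young diagram of $V(\mu)_{n+1}$ is obtained from that of $V(\lambda)_n$ by appending a single box.  Enumerating the legal positions for this box, one checks that every resulting partition $\mu$ not equal to $\lambda$ itself satisfies $|\mu|+\mu_1 \in \{N+1, N+2\}$.  Axiom (3) of representation stability at $N$ forces $c_\mu = 0$ for every such $\mu$, so only $\mu = \lambda$ contributes to the isotype in question.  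Consequently the $V(\lambda)_n$-isotype of $A(n+1)_j|_{S_n}$ is $V(\lambda)_n\otimes U_{n+1}^j$, identified via the multiplicity-one branching inclusion $V(\lambda)_n\hookrightarrow V(\lambda)_{n+1}|_{S_n}$.

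It follows that the restriction of $\phi_n$ to the $V(\lambda)_n$-isotype takes the shape $\iota \otimes \alpha_n^j$ for a linear map $\alpha_n^j : U_n^j \to U_{n+1}^j$, where $\iota$ denotes the fixed branching inclusion.  Axiom (1) (injectivity of $\phi_n$) forces $\alpha_n^j$ to be injective, and the equality $\dim U_n^j = \dim U_{n+1}^j$ promotes this to an isomorphism.  The chain-map property of $\phi_n$ passes to the multiplicity spaces, yielding $\bar d_{n+1}^j \circ \alpha_n^{j+1} = \alpha_n^j \circ \bar d_n^{j+1}$.  The $\alpha_n^\bullet$ therefore assemble into an isomorphism of chain complexes $U_n^\bullet \xrightarrow{\sim} U_{n+1}^\bullet$, and the multiplicity of $V(\lambda)_n$ in $H_j(A(n))$ is independent of $n\geq N$.

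The main obstacle is the combinatorial enumeration underlying the second paragraph: one must verify that adding a box to the partition $(n-|\lambda|,\lambda_1,\dots,\lambda_k)$ at any location other than the end of the first row yields a partition whose associated stable weight strictly exceeds $N$.  It is precisely this rigidity — absent for $\lambda$ with $|\lambda|+\lambda_1 < N$, where other isotypes of the same stable weight can leak into the $V(\lambda)_n$-component under $\phi_n$ — that explains why the conclusion is asserted only at the sharp threshold.  Granted this enumeration, the remainder of the argument is a routine manipulation of isotypical components via Schur's lemma.
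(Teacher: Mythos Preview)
Your proof is correct, and it rests on the same combinatorial observation as the paper's: for $\lambda$ with $|\lambda|+\lambda_1=N$ and $n\geq N$, the branching rule together with axiom~(3) ensures that the only irreducible $V(\mu)_{n+1}$ in $A(n+1)$ whose restriction contains $V(\lambda)_n$ is $V(\lambda)_{n+1}$ itself.  The paper phrases this identically (in the paragraph establishing Equation~\ref{ineq}).

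Where you diverge is in how this observation is deployed.  The paper applies it separately to the subspaces of cycles and boundaries, deducing that the multiplicities $z^i_{\lambda,n}$ and $b^i_{\lambda,n}$ are non-decreasing, and then uses the constancy of the total multiplicity $c^i_{\lambda,n}=b^{i-1}_{\lambda,n}+z^i_{\lambda,n}$ to squeeze these into equalities.  You instead pass at once to the multiplicity complex $U_n^\bullet=\mathrm{Hom}_{S_n}(V(\lambda)_n,A(n)_\bullet)$ and show that $\phi_n$ induces an isomorphism of chain complexes $U_n^\bullet\cong U_{n+1}^\bullet$.  Your route is somewhat more direct: it avoids the inequality-and-squeeze step and makes transparent that the entire $V(\lambda)$-isotypical chain complex, not merely its homology dimensions, is canonically identified along the stabilization maps.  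The paper's route has the minor advantage of never needing to name the branching inclusion $\iota$ or invoke Schur's lemma explicitly, working instead with multiplicities throughout.
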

\begin{proof}
	Each $A(n)_i$ contains an equivariant subspace of cycles $Z_i(n)$.  This subspace has an equivariant subspace of boundaries $B_i(n)$.  We can choose (rationally) equivariant complements.  The complement of the cycles maps (via $d$) isomorphically to the boundaries one dimension lower.  The complement of the boundaries is identified with the homology (mapping a cycle to its homology class).  Thus we have an $S_n$-equivariant decomposition:
	\begin{equation}\label{decomp}
		A(n)_i  \cong B_{i-1}(n)\oplus Z_i(n) \cong B_{i-1}(n)\oplus B_i(n) \oplus H_i(n)
	\end{equation}
	The differential with respect to this decomposition has the form $d(a,b,c) = (0,a,0)$.
	
	Let $\lambda$ be a partition of the specified form, i.e.\ a partition such that $(\lambda_1,\lambda)$ is a partition of $N$.   Define $c_{\lambda,n}^i$ to be the coefficient of $V(\lambda)_n$ appearing in $A(n)_i$. Similarly,
	write $z^i_{\lambda,n}$, $b^i_{\lambda,n}$, $h^i_{\lambda,n}$ for the number of copies of $V(\lambda)_n$ appearing in the cycles, boundaries and homology of degree $i$ for the given $n$.   From the Equation $\ref{decomp}$ we know that
	\begin{equation} \label{dim}
		c_{\lambda,n}^i  = b_{\lambda,n}^{i-1} +z_{\lambda,n}^i = b_{\lambda,n}^{i-1} + b_{\lambda,n}^i + h_{\lambda,n}^i.
	\end{equation}

	We then claim that for $n\geq N$, both
\begin{equation}\label{ineq}
	z^i_{\lambda, n} \leq z^i_{\lambda, n+1} \text{ and }b^i_{\lambda, n} \leq b^i_{\lambda, n+1}.
\end{equation}	
	This follows from the injectivity assumption on the map $\phi\colon A(n)_i\to A(n+1)_i$ as follows.  First, restricting $\phi$ to cycles, the injectivity implies that the number of copies of $V(\lambda)_n$ appearing in $Res(Z_i(n+1))$ is greater than or equal to the number of copies of $V(\lambda)_n$ appearing in $Z_i(n)$.  Second, a copy of $V(\lambda)_n$ appearing in $Res(Z_i(n+1))$ could only arise from a partition of $n+1$ found by adding $1$ to an entry of $(n-|\lambda|, \lambda_1\cdc \lambda_k)$, or by adding $1$ as a new, last entry.  However since $A$ stabilizes at $N$, the multiplicity of such a partition in $A(n+1)$ will be zero unless we add $1$ to the first entry.  In other words, $V(\lambda)_n$ can only arise in $Res(Z_i(n+1))$ as the restriction of $V(\lambda)_{n+1}.$  Hence the multiplicity of $V(\lambda)_n$ in $Res(Z_i(n+1))$ is equal to the multiplicity of $V(\lambda)_{n+1}$ in $Z_i(n+1)$, establishing Equation $\ref{ineq}$ for cycles.  The claim for boundaries follows similarly. %

To conclude,  for $n\geq N$ we know $c_{\lambda, n}^i = c_{\lambda, n+1}^i$, thus by Equation $\ref{dim}$,
	$$
	b_{\lambda,n}^{i-1} +z_{\lambda,n}^i = b_{\lambda,n+1}^{i-1} +z_{\lambda,n+1}^{i}.
	$$
But since both terms on the right are greater than or equal to their respective associate on the left, they must be equal.  In particular $b_{\lambda,n}^{i-1} = b_{\lambda,n+1}^{i-1}$.  But we also know (again from Equation $\ref{dim}$) that 
	$$
	b_{\lambda,n}^{i-1} + b_{\lambda,n}^i + h_{\lambda,n}^i = b_{\lambda,n+1}^{i-1} + b_{\lambda,n+1}^i + h_{\lambda,n+1}^i,
	$$
	from which we conclude that $h_{\lambda,n}^i = h_{\lambda,n+1}^i$.
\end{proof}
Note that the second statement of Corollary $\ref{3h2}$ given in the introduction follows from this result.  To see this, conjugate the partition in the statement of the Corollary and apply Theorem $\ref{3h}$ to the associated stable family.

The above result could be viewed as a (partial) answer to the general question of how representation stability at the chain level constrains the possible multiplicities on homology.  There are additional features in our example of interest which we will exploit to strengthen this statement in that case.

\section{The graph complex $B(g,n,r)$.}  In this section we recall the construction of the graph complex $B(g,n,r)$ from \cite{PW}.  We then prove a few technical lemmas which will be of use in the proof or our main theorem (Theorem $\ref{mainthm}$) in the following section.

\subsection{Conventions for graphs}  To begin we fix terminology and notation regarding graphs. 

\subsubsection{Graphs}
A graph is a 4-tuple $(V,F,a,\iota)$ consisting of a non-empty set $V$, called the vertices of the graph, a set $F$ called the flags of the graph, a function $a\colon F\to V$ called the adjacency map, and an involution $\iota\colon F\to F$.  In this paper we restrict attention to the case when both $V$ and $F$ are finite sets.

The intuition behind this definition views a flag $f\in F$ as half of an edge which is adjacent to the vertex $a(f)$.  Two distinct flags (half-edges), call them $f$ and $f^\prime$, are joined together to form an edge provided that $f^\prime=\iota(f)$.  The fixed points of the involution, i.e.\ those flags having $f=\iota(f)$, are not considered edges.  They are called legs, and can be represented graphically as half-edges, terminating only at one vertex.  Given a graph, we write $E$ for the set of edges and $L$ for the set of legs.  In particular $|L|+2|E| = |F|$.   Note that we allow tadpoles, i.e.\ edges consisting of two flags which are both adjacent to the same vertex.  We also allow parallel edges, i.e.\ multiple edges adjacent to the same pair of vertices (or single vertex in the case of parallel tadpoles).

A morphism of graphs is a pair of functions $V\to V^\prime$ and $F\to F^\prime$ commuting with the adjacencies and involutions.  If both such functions are bijective, the morphism is an isomorphism.

The connected components of a graph are the blocks of the partition of the vertices determined by the equivalence relation generated by declaring $v_1\sim v_2$ if there exists an edge $\{f,\iota(f)\}$ for which $a(f)=v_1$ and $a(\iota(f))=v_2$.  For the remainder of this paper we assume all graphs have one connected component, i.e.\ are connected.  Given such a graph with vertices $V$ and edges $E$ we define $\beta:=|E|-|V|+1$.

\subsubsection{Marked Graphs}\label{markedgraphsec}

All of the graphs in this paper will come with an additional decoration which we call a marking.
\begin{definition}  A marking $(v,D)$ of a graph $(V,F,a,\iota)$ is a choice of a vertex $v\in V$ along with the choice of a subset $D$ of the flags adjacent to $v$, i.e.\ $D\subset a^{-1}(v)$.  Given a marking, the vertex $v$ is called the distinguished vertex (often ``DV'' for short), the flags of $D$ are called the marked flags, and the vertices in $V\setminus v$ (if any) are called the neutral vertices.
	
We remark that given a marking $(v,D)$ for which $D$ is not empty, we can read off the distinguished vertex as the unique element of $a(D)$.  However, $D$ may be empty.	
	
A marking $(v,D)$ is called admissible if it satisfies the following conditions:
\begin{enumerate}
	\item (stability) Every neutral vertex $w\neq v$ satisfies $|a^{-1}(w)| \geq 3$,
	\item (no neutral tadpoles)  If $\{f,\iota(f)\}$ forms a tadpole then $a(f)=a(\iota(f))=v$,
	\item (no double marked tadpoles)  If $\{f,\iota(f)\}$ is a subset of $D$ then $f = \iota(f)$. 
\end{enumerate}
\end{definition}

An unlabeled marked graph is a graph (connected by assumption) along with an admissible marking.  A morphism of unlabeled marked graphs is assumed to preserve the distinguished vertex and set of marked flags.

A marked graph is an unlabeled marked graph along with a total order on $L$, the set of legs of the graph.  This total order will be manifest as a bijection $L \stackrel{\eta}\to \{1\cdc |L|\}$.  A morphism of marked graphs is a morphism of unlabeled marked graphs which preserves the given total order.  In particular, an isomorphism of marked graphs must preserve the leg labeling.

A marked graph will typically be denoted by a single character, $\gamma$ say.  We then refer to its constituent pieces by the above letters $\gamma=(V,F,\iota,a,v,D,\eta)$, with subscript ($V_\gamma$, $F_\gamma$, etc) as needed.  We similarly refer to derived information by a subscript, e.g. $L_\gamma$ for the set of legs of $\gamma$ as well as $\beta_\gamma$, $E_\gamma$, etc..

\begin{definition}\label{markedgraphdef} A marked graph $\gamma$ is of type $(g,n,r)$ provided $g=\beta_\gamma+1$, $|L_\gamma|=n$ and $|D_\gamma|=r$.  We write $\Gamma(g,n,r)$ for the set of such graphs.
\end{definition}

\begin{figure}
	\centering
	\includegraphics[scale = .8]{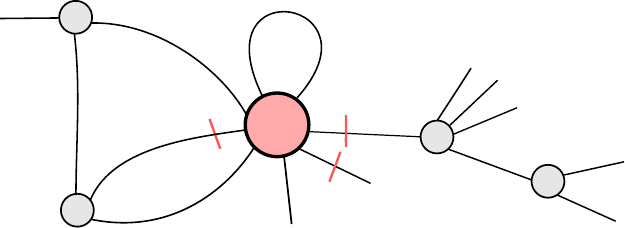}
	\caption{A marked graph of type (4,8,3) with leg labeling suppressed.}
	\label{fig:mg2}
\end{figure}

We call $g$ the genus of the graph.  This shift by $+1$ when comparing $g$ and $\beta$ is, at this point, a convention which may be thought of as asserting that the distinguished vertex $v$ has itself genus $1$.  This convention comes from the examples of interest, as we will see below.

Our convention for drawing marked graphs is to color the distinguished vertex red, and to place red tic-marks on the marked flags.  See Figure $\ref{fig:mg2}$.

\subsubsection{Determinants}

Given a finite set $X$ we define $det(X)$ to be the top exterior power of the $\mathbb{Q}$-span of $X$.  Specifically, this is a one-dimensional vector space concentrated in degree $|X|$, and carrying the alternating action of the symmetric group $S_X$.  We also define $det^{-1}(X):= \Sigma^{-2|X|} det(X)$, i.e.\ the same underlying vector space, but concentrated in degree $-|X|$.  Note the set $X$ could be empty, with the resulting $det(X)$ concentrated in degree 0.

Given a marked graph $\gamma$, with edge set $E=E_\gamma$ and distinguished flags $D=D_\gamma$ we define the vector space
$$
det(\gamma) :=  det(E)\tensor det^{-1}(D).
$$
\subsubsection{Edge Contraction}
Given a marked graph $\gamma$ and an edge $e=\{f,f^\prime\}$ of $\gamma$ we define an associated edge contraction map $\partial_e$ as follows.  First, if $e$ is a tadpole then we define $\partial_e=0$.

Second, if $e$ is an edge which is not a tadpole and if both flags $f$ and $f^\prime$ are unmarked (i.e.\ do not belong to $D$), we first define the marked graph $\gamma/e$ by contracting the edge $e$.  The edge contraction specifies a surjection on the vertices, and the distinguished vertex $\bar{v}$ of $\gamma/e$ is the image of the distinguished vertex $v$ of $\gamma$.  Since both contracted flags were unmarked we may identify the distinguished flags $D_\gamma\subset F_\gamma$ with a subset of the flags of $F_{\gamma/e}$, which form the distinguished flags of $\gamma/e$.  We then define
$$
\partial_e\colon det(\gamma) \to det(\gamma/e)
$$
by asserting that $e$ was dropped in the last position of the wedge product, with all other positions remaining unchanged.

Third, and finally, if $e$ is an edge which is not a tadpole then at most one of the flags $f$ and $f^\prime$ is marked (i.e.\ does belong to $D$).  The case where neither is marked having just been addressed, let us assume without loss of generality that $f$ is marked.   Define the set $N= a^{-1}(\bar{v})\setminus a^{-1}(v)$, i.e.\ $N$ is the set of flags newly adjacent to the distinguished vertex $\bar{v}$ after contracting $e$.  Given an element $f_i\in N$, we define the marked graph $\gamma/e \wedge f_i$ to be the graph formed by contracting $e$, thus discarding $f$, and marking $f_i$ in its place, while retaining the other marked flags.  In particular the resulting set of marked flags may be identified with $(D\setminus{f}) \cup f_i$.

We then define a map
$$
\partial_e\colon det(\gamma) \to \bigoplus_{f_i\in N}det(\gamma/e\wedge f_i)  
$$
by declaring $\partial_e$ removes the edge $e$ in the last position of the wedge product of the edges, while replacing $f$ with $f_i$ in any given position in the wedge product of distinguished flags.

This concludes the definition of $\partial_e$.  For an unmarked flag $f\in a^{-1}(v)\setminus D$ we also define the marking map
$$
\partial_f\colon det(\gamma) \to det(\gamma\wedge f)
$$
which, by definition, adds the flag $f$ in the first (aka leftmost) position of the wedge product.

\subsection{The graph complex $B(g,n,r)$}

In this section we define a chain complex of marked graphs, which we denote $B(g,n,r)$.  This construction is due to Payne and Willwacher \cite{PW}, with a few minor differences.  First, the linear dual (edge contraction) differential is more natural for our purposes.  Second, a shift of degree will be needed to make the stabilization maps of degree 0.  Finally,  the emphasis of {\it loc.\ cit.\ }is for the case $r=11$.  So the reader making a direct comparison should observe that what is called $B(g,n)$ in \cite{PW} is the linear dual of what we call $\Sigma^{-n} B(g,n,11)$. 

First, we write $[\gamma]$ for the isomorphism class of a marked graph $\gamma$.  Given integers $g \geq 1$ and $n,r\geq 0$, we then define the graded vector space $$B(g,n,r) = \bigoplus_{\substack{[\gamma] \text{ of type } \\ (g,n,s) \text{ with } s\geq r}} \Sigma^{n} det(\gamma)_{Aut(\gamma)},$$
where $\Sigma^{n}$ denotes an upward shift in degree by $n$.  We emphasize that all graphs here are marked, which in particular means leg-labeled, and that all (auto)morphisms preserve these extra decorations by assumption.

We then make this graded vector space a chain complex by introducing the differential induced on each summand $[\gamma]$ via the map
$$
\partial = \sum_{e\in E_\gamma} \partial_e + \sum_{f\in a^{-1}(v)\setminus D} (-1)^{|E_\gamma|} \partial_f.
$$

\begin{lemma} $\partial^2=0$.
\end{lemma}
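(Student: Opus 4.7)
The plan is to verify $\partial^2 = 0$ by expanding the composition and grouping terms by the type of each constituent operation. Writing out $\partial \circ \partial$ applied to a class $x \in \det(\gamma)$ with $|E_\gamma|$ edges, I would split the result into four sums: edge-edge ($\partial_{e'}\partial_e$), flag-flag ($\partial_{f'}\partial_f$), and the two mixed orders ($\partial_{e'}\partial_f$ and $\partial_{f'}\partial_e$). Because contracting an edge decreases the edge count by one, the sign $(-1)^{|E|}$ in front of the flag-marking piece becomes $(-1)^{|E|-1}$ after an edge contraction. A bookkeeping computation then shows that the mixed contribution collapses to $(-1)^{|E|-1}\sum_{e,f}(\partial_f\partial_e - \partial_e\partial_f)$, while the edge-edge and flag-flag contributions are their own standard double sums. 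So it suffices to establish three pointwise identities: $\partial_{e'}\partial_e + \partial_e\partial_{e'} = 0$ for $e \neq e'$, $\partial_{f'}\partial_f + \partial_f\partial_{f'} = 0$ for $f \neq f'$, and $\partial_f\partial_e = \partial_e\partial_f$.

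The two easy cases I would dispatch first. For the flag-flag identity, the definition of $\partial_f$ inserts $f$ in the first position of the wedge product in $\det^{-1}(D)$, so $\partial_{f'}\partial_f$ produces the wedge $f' \wedge f \wedge (\cdots)$ while $\partial_f \partial_{f'}$ produces $f \wedge f' \wedge (\cdots)$, and these differ by a single transposition. For the mixed identity, I would observe that marking an unmarked flag $f$ at the distinguished vertex and contracting an edge $e$ act on disjoint parts of the data (the marking edits $\det^{-1}(D)$ in the leftmost slot, while the contraction edits $\det(E)$ in the rightmost slot), so the two operations commute on the nose once one checks that the underlying marked graphs coincide — which is a routine case check based on whether $e$ has a marked flag (in which case the replacement flag $f_i \in N$ is chosen independently of whether $f$ has been appended to $D$).

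The substantive work is the edge-edge identity $\partial_{e'}\partial_e + \partial_e\partial_{e'} = 0$, which I would handle by cases on how $e, e'$ interact with each other and with the marking $(v, D)$. When neither edge is adjacent to the distinguished vertex through a marked flag, this is the classical graph-complex anticommutation: both orders produce $\gamma/\{e,e'\}$, and the signs are opposite because $\partial_e$ drops $e$ from the last position of the edge-wedge, so contracting $e$ then $e'$ versus $e'$ then $e$ differs by the transposition $(e\, e')$ in $\det(E)$. The remaining subcases arise when at least one of the edges carries a marked flag, and I would handle each of them: (i) if the contraction of $e$ converts $e'$ into a tadpole (e.g.\ parallel edges to the distinguished vertex), then both $\partial_{e'}\partial_e$ and $\partial_e\partial_{e'}$ vanish by the tadpole clause; (ii) if $e$ has a marked flag and $e'$ does not, one must verify that summing over the set $N_e$ of newly-adjacent flags commutes with the subsequent contraction of $e'$, and that the resulting graphs (with their new marking set) match the ones produced in the opposite order; (iii) the symmetric situation; (iv) if both $e$ and $e'$ are marked edges at the distinguished vertex, one checks that the double sum over $f_i \in N_e$ then $f_j' \in N_{e'}$ matches the one in the reverse order, as the two replacement choices are independent and insertion into $\det^{-1}(D)$ is at fixed positions.

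I expect the main obstacle to lie in case (iv), where a marked-edge contraction changes the combinatorial structure of the distinguished vertex's flag set — in particular, the set $N_{e'}$ that one sums over after first contracting $e$ is not literally the same set as before $\partial_e$ acted, so the identification of the two double sums and their signs requires careful tracking of how the marking swap interacts with the wedge positions in $\det^{-1}(D)$. The signs should line up exactly because each marked flag is replaced in its original position, but writing the bijection between the two iterated sums is the step where a careful case analysis is essential.
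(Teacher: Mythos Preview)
Your edge--edge and flag--flag analyses are fine, but the mixed case has a real gap: the index sets for $f$ do not match up the way your ``same $e$, same $f$'' pairing assumes. In the sum $\sum_e\sum_{f'}\partial_{f'}\partial_e$, the flag $f'$ ranges over unmarked flags at the distinguished vertex of $\gamma/e$, which may include flags $f'\in N$ that were \emph{not} adjacent to the distinguished vertex in $\gamma$ (namely, flags at the neutral endpoint of $e$). These terms have no counterpart of the form $\partial_e\partial_{f'}$, since $\partial_{f'}$ is undefined on $\gamma$. Conversely, in $\sum_f\sum_{e'}\partial_{e'}\partial_f$ there are terms where $f$ is the DV-flag of the edge $e'$ itself; marking $f$ turns $e'$ into a marked edge, so $\partial_{e'}\partial_f$ is a marked-edge contraction summing over $N$, whereas $\partial_f\partial_{e'}$ is meaningless because $f$ disappears upon contracting $e'$. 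Your remark that ``the replacement flag $f_i\in N$ is chosen independently of whether $f$ has been appended to $D$'' only covers the case where $f$ is disjoint from $e$.

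The correct cancellation --- and this is the ``slightly more complicated'' step the paper isolates --- pairs these two leftover families against each other: contracting an unmarked edge $e$ and then marking a newly adjacent flag $f_i\in N$ yields the same marked graph as first marking the DV-flag $f_e$ of $e$ and then performing the marked-edge contraction of $e$ with replacement $f_i$. The sign factor $(-1)^{|E_\gamma|}$ versus $(-1)^{|E_\gamma|-1}$ makes these cancel. So your overall architecture is sound, but the mixed identity is not simply $\partial_f\partial_e=\partial_e\partial_f$; you need this cross-pairing between ``mark the edge-flag then contract'' and ``contract then mark a new flag'' to close the argument.
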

\begin{proof}
The fact that $\partial^2=0$ can be seen, as in \cite{PW}, by restricting to a summand indexed by $\gamma$ and arguing that the terms in $\partial^2$ each appear twice with opposite sign.

For example each pair of unmarked edges of $\gamma$ indexes a pair of terms in $\partial^2$ with opposite sign, corresponding to the two possible orders of the edge contractions.  Similarly each pair of unmarked flags at the distinguished vertex indexes a pair of terms in $\partial^2$ corresponding to the two possible orders in which the flags can be marked.

Slightly more complicated are terms corresponding to an unmarked edge and a flag $f$ adjacent to a neutral vertex which is itself connected to the DV by an edge $e$ .  We can mark $e$, then contract it and mark $f$.  Or we can first contract $e$ then mark $f$.  The sign factor of 
$(-1)^{|E_\gamma|}$
assures that these two terms carry opposite sign reflecting the number of graph edges present upon applying the marking.

Finally, terms corresponding to contracting a marked edge followed by another edge contraction are handled similarly to the case of two unmarked edges.  We observe that if we contract a marked edge $e$ followed by a newly adjacent and now marked edge $e^\prime$, the possible recipients of a marking will coincide with the result of first contracting $e^\prime$ and then $e$.
\end{proof}


A few remarks about the chain complex $B(g,n,r)$ are in order.  First, note that our degree conventions are homological.  Degrees are encoded combinatorially by declaring that edges have degree $1$ and marked flags have degree $-1$, along with a global shift by $n$ which is a convention chosen to ensure that our stabilization maps are degree $0$.

Second, observe that the chain complex $B(g,n,r)$ is bounded. The maximum degree occurs when edges are fully expanded and there are the minimum number of markings.  Counting flags, we see this occurs when $|F|=2|E|+n = 3(|V|-1)+r$, hence $|E|=3(|E|-|V|)+(n-r)+3$ or $|E|=3(g-1) + (n-r)$.    Thus the maximum degree (after the global shift by $n$) is $3(g-1)+2(n-r).$  On the other hand, the number of markings $s$ must always be less than or equal to the number of edges and legs, $|E|+n$, so the minimum degree is clearly bounded below by $0$ (after our global shift by $n$).

Following \cite{PW} we define the excess of the chain complex $B(g,n,r)$ to be $$m:=3(g-1) + 2(n-r).$$  Then in our conventions, a chain complex $B(g,n,r)$ of excess $m$ is bounded below by $0$ and bounded above by $m$.  In particular $(g,n,r)$ is of negative excess $m<0$ only if $\Gamma(g,n,r)=\emptyset$, and so we restrict attention to triples $(g,n,r)$ having $m\geq 0$ from here on.  We remark that this condition almost implies the classical stability condition for modular operads $2g+n\geq 3$, but there is one exception, namely $(g,n,r) = (1,0,0)$, which our conventions allow.  Specifically $B(1,0,0)\cong \mathbb{Q}$, concentrated in degree $0$.


\subsection{The Orbit of a Marked Graph}
In this subsection we consider the sub-representation of $B(g,n,r)$ determined by a marked graph $\gamma\in \Gamma(g,n,r)$.  

Given a permutation $\sigma \in S_n$, define the marked graph $\sigma\gamma$ whose underlying unlabeled marked graph coincides with $\gamma$ but whose leg labels are permuted by $\sigma$.  Specifically the leg labeled by $i$ in $\gamma$ is labeled by $\sigma(i)$ in $\sigma\gamma$.

Next, define the group $I_\gamma\subset S_n$ to be those permutations $\sigma$ for which $\sigma\gamma$ and $\gamma$ are isomorphic.  Crucially, an isomorphism is presumed to preserve the leg labels, else this would be a vacuous condition. To see that $I_\gamma$ is a group, note that if $\phi_\sigma \colon \sigma\gamma \to \gamma$ is a leg-label preserving isomorphism, then the same is true for $\phi_\sigma \colon \sigma\tau\gamma \to \tau \gamma$.  Here, our use of the notation $\phi_\sigma$ to refer to both maps reflects the fact that the unmarked graphs underlying the sources and targets of these morphisms have the same underlying sets of flags and vertices -- only the leg labeling is different.


\begin{lemma}\label{indlem}  Let $\gamma\in\Gamma(g,n,r)$.  The vector space $det(\gamma)_{Aut(\gamma)}$ carries an action of the group $I_\gamma$ for which there is a canonical injection of $S_n$ modules $Ind^{S_n}_{I_\gamma}(det(\gamma)_{Aut(\gamma)})\hookrightarrow B(g,n,r)$.
\end{lemma}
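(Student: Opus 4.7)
The plan is to first construct a canonical action of $I_\gamma$ on $det(\gamma)_{Aut(\gamma)}$ using a short exact sequence that compares label-preserving automorphisms with the full automorphism group of the underlying unlabeled marked graph, and then realize the induced representation explicitly as the sub-representation of $B(g,n,r)$ spanned by the $S_n$-orbit of $\gamma$.

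For the first step, let $\widetilde{Aut}(\gamma)$ denote the automorphism group of the underlying unlabeled marked graph $\tilde\gamma$, so leg labels need not be preserved. Each $\phi \in \widetilde{Aut}(\gamma)$ permutes $L_\gamma$, and the labeling $\eta_\gamma\colon L_\gamma \to \{1\cdc n\}$ converts this to a permutation $\eta_\gamma \circ \phi|_{L_\gamma} \circ \eta_\gamma^{-1} \in S_n$. Unwinding definitions, this gives a homomorphism $\widetilde{Aut}(\gamma) \to S_n$ whose kernel is $Aut(\gamma)$ and whose image is exactly $I_\gamma$, yielding a short exact sequence
$$1 \to Aut(\gamma) \to \widetilde{Aut}(\gamma) \to I_\gamma \to 1.$$
Since $\widetilde{Aut}(\gamma)$ acts on $det(\gamma) = det(E_\gamma) \tensor det^{-1}(D_\gamma)$ by signed permutation of edges and marked flags, the quotient $I_\gamma$ acts naturally on the coinvariants $det(\gamma)_{Aut(\gamma)}$.

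For the second step, for any $\sigma \in S_n$ the graph $\sigma\gamma$ shares the unlabeled marked graph $\tilde\gamma$ with $\gamma$, so $Aut(\sigma\gamma) = Aut(\gamma)$ and there is a canonical identification $det(\sigma\gamma)_{Aut(\sigma\gamma)} = det(\gamma)_{Aut(\gamma)}$. Moreover, $[\sigma\gamma] = [\sigma^\prime \gamma]$ as labeled iso classes precisely when $\sigma^{-1}\sigma^\prime \in I_\gamma$, so the $S_n$-orbit of $[\gamma]$ is in bijection with $S_n/I_\gamma$. Let $V_\gamma \subset B(g,n,r)$ denote the sum of summands indexed by this orbit. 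Define
$$\Psi\colon Ind_{I_\gamma}^{S_n}\bigl(det(\gamma)_{Aut(\gamma)}\bigr) \to V_\gamma \subset B(g,n,r), \qquad \sigma \tensor x \mapsto \iota_{\sigma\gamma}(x),$$
where $\iota_{\sigma\gamma}$ is the summand inclusion and $x$ is interpreted in $det(\sigma\gamma)_{Aut(\sigma\gamma)}$ via the canonical identification.

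The main work, which I expect to be the only real obstacle, is checking well-definedness of $\Psi$: for $\tau \in I_\gamma$ one must verify $\iota_{\sigma\tau\gamma}(x) = \iota_{\sigma\gamma}(\tau \cdot x)$ in $B(g,n,r)$. Lifting $\tau$ to $\phi_\tau \in \widetilde{Aut}(\gamma)$, one first checks that $\phi_\tau$ is simultaneously an isomorphism of labeled marked graphs $\sigma\tau\gamma \to \sigma\gamma$ (the label-preservation calculation is the same as for $\phi_\tau\colon \tau\gamma \to \gamma$, post-composed with the permutation $\sigma$). The identification of determinants induced by this $\phi_\tau$ is, by construction in the first step, exactly the $I_\gamma$-action of $\tau$ on $det(\gamma)_{Aut(\gamma)}$; the ambiguity in the choice of lift lies in $Aut(\gamma)$ and is killed on coinvariants. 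With well-definedness in hand, $S_n$-equivariance follows from $\rho \cdot (\sigma \tensor x) = (\rho\sigma) \tensor x \mapsto \iota_{\rho\sigma\gamma}(x)$ matching the permutation action on summands of $B(g,n,r)$, and bijectivity onto $V_\gamma$ follows by a dimension count, both sides having dimension $[S_n:I_\gamma]\cdot \dim det(\gamma)_{Aut(\gamma)}$; composition with $V_\gamma \hookrightarrow B(g,n,r)$ yields the desired injection.
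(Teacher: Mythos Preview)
Your proof is correct and follows essentially the same approach as the paper's. Your short exact sequence $1 \to Aut(\gamma) \to \widetilde{Aut}(\gamma) \to I_\gamma \to 1$ is a clean repackaging of what the paper does by hand: the paper's composition $det(\gamma)\stackrel{\iota}\to det(\sigma\gamma)\stackrel{\phi}\to det(\gamma)$ is precisely the action of your lift $\phi_\sigma\in\widetilde{Aut}(\gamma)$ on $det(\gamma)$, and the paper's two well-definedness checks (independence of $\phi$ and descent to coinvariants) are subsumed by the general fact that a quotient $G/N$ acts on $V_N$ when $N\trianglelefteq G$. Likewise, your explicit $\Psi$ is exactly the adjoint map the paper invokes, and your dimension count for bijectivity onto $V_\gamma$ is equivalent to the paper's observation that distinct cosets of $I_\gamma$ give non-isomorphic labeled graphs.
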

\begin{proof}
The group $I_\gamma$ acts on $det(\gamma)_{Aut(\gamma)}$ in the following fashion.  For $\sigma \in I_\gamma$, the choice of an isomorphism of labeled graphs $\sigma\gamma \stackrel{\phi}\to \gamma$ induces a linear map $det(\sigma\gamma)\stackrel{\phi}\to det( \gamma)$. The identity map on flags, although only a morphism of unlabeled marked graphs, nevertheless induces an isomorphism
$det(\gamma)\stackrel{\iota}\to det(\sigma \gamma)$.

The composition
$$det(\gamma)\stackrel{\iota}\to det(\sigma \gamma)\stackrel{\phi}\to det(\gamma)\twoheadrightarrow det(\gamma)_{Aut(\gamma)}$$
is independent of the choice of isomorphism $\phi$.  Indeed given two such, say $\phi_1,\phi_2$, then 
$\phi_1\circ \iota(x) - \phi_2\circ\iota(x) = \phi_1\circ\phi^{-1}_2(\phi_2 (\iota(x)) - \phi_2(\iota(x))$
is zero in the coinvariants since $\phi_1\circ \phi_2^{-1}\in Aut(\gamma)$.

We further claim that the above composition depends only on the choice of $Aut(\gamma)$ coinvariant class of the input.  To see this choose $\psi \in Aut(\gamma)$.  By abuse of notation we also write $\psi$ for the induced map $det(\gamma) \to det(\gamma)$.  There is a corresponding $\psi_\sigma\in Aut(\sigma\gamma)$, chosen to commute with $\iota$.  Choose also $\phi$ as above, then
$$
\phi \iota (x-\psi(x)) = \phi \iota(x) - \phi \iota\psi(x) = \phi \iota (x) - \phi \psi_\sigma (\iota(x)) = \phi \iota(x) - \phi\psi_\sigma \phi^{-1}\phi \iota(x),$$
with $\phi\psi_\sigma\phi^{-1}\in Aut(\gamma)$, hence 
is zero in $Aut(\gamma)$-coinvariants.  Hence $\sigma \in I_\gamma$ gives a map $det(\gamma)_{Aut(\gamma)} \to det(\gamma)_{Aut(\gamma)}$, and it is straight forward to confirm this is a group action.

Finally, note that there is an $I_\gamma$-equivariant map $det(\gamma)_{Aut(\gamma)} \to B(g,n,r)$ given by inclusion to the summand of the target indexed by $[\gamma]$.  The adjoint of this map is the stated $S_n$-equivariant map.  The fact that this map is injective follows from the fact that no two representatives of distinct cosets in $S_n/I_\gamma$ are isomorphic as labeled graphs.
\end{proof}

\subsection{Edge Cutting.}\label{ecsec}

We conclude this section with a technical lemma (Lemma $\ref{rowlem}$).  The proof of our Main Theorem (Theorem $\ref{mainthm}$) will be by induction on the genus.  This will require a way to compare the stability bound for graphs in genus $g$ with those in genus $g-1$.  For this comparison we study the operation of cutting apart a non-disconnecting edge in a graph.

First, for any marked graph $\gamma$ we define $\rho_\gamma := \rho(Ind_{I_\gamma}^{S_n}(det(\gamma)_{Aut(\gamma)}))$, with $\rho$ as defined in Subsection $\ref{altsec}$.  Then suppose $\gamma$ is a marked graph of type $(g,n,r)$ with $g\geq 2$.   Since $g\geq 2$, we may choose an edge $e$ whose removal doesn't disconnect the graph, and form a new graph $\gamma_c$ by cutting $e$ and labeling the resulting flags by $n+1$ and $n+2$ in either order.  

\begin{lemma}\label{rowlem}  For any such $\gamma$ and non-disconnecting edge $e$, we have $\rho_\gamma\leq \rho_{\gamma_c}$. 
\end{lemma}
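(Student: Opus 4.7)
My plan is to construct an $S_n$-equivariant injection
$$
Ind_{I_\gamma}^{S_n}\bigl(det(\gamma)_{Aut(\gamma)}\bigr) \hookrightarrow Res_{S_n}^{S_{n+2}} Ind_{I_{\gamma_c}}^{S_{n+2}}\bigl(det(\gamma_c)_{Aut(\gamma_c)}\bigr),
$$
and then invoke the branching rule: any irreducible $V_\lambda$ appearing in the right-hand side arises from some $V_\mu$ of $S_{n+2}$ by removing two corner boxes, which cannot increase the number of rows. Since $\rho$ records the largest number of rows among the irreducible constituents of the argument (after tensoring with sign, cf.\ Subsection $\ref{altsec}$), this will force $\rho_\gamma \leq \rho_{\gamma_c}$.

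To construct the injection, set $V = det(\gamma)_{Aut(\gamma)}$ and $W = det(\gamma_c)_{Aut(\gamma_c)}$; if $V = 0$ the inequality is trivial, so assume $V$ is one-dimensional. The non-disconnecting hypothesis on $e$ ensures $\gamma_c \in \Gamma(g-1,n+2,r)$ is connected. First I would observe that $Aut(\gamma_c)$ is the subgroup of $Aut(\gamma)$ preserving the edge $e$, and acts trivially on the factor $det(\{e\})$ in the canonical splitting $det(E_\gamma) \cong det(E_{\gamma_c}) \otimes det(\{e\})$; this forces $W$ to be one-dimensional as well. Letting $I_{\gamma_c}^{(n)} := I_{\gamma_c} \cap S_n$ with $S_n \subset S_{n+2}$ the standard subgroup of permutations fixing $n+1$ and $n+2$, automorphisms of $\gamma_c$ fixing both labels $n+1$ and $n+2$ correspond to automorphisms of $\gamma$ fixing $e$ pointwise, yielding an inclusion $I_{\gamma_c}^{(n)} \hookrightarrow I_\gamma$. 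A direct sign computation using the splitting above shows that the characters by which $I_\gamma$ and $I_{\gamma_c}$ act on $V$ and $W$ respectively agree upon restriction to $I_{\gamma_c}^{(n)}$, giving an isomorphism $W \cong V$ of $I_{\gamma_c}^{(n)}$-modules.

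From this, Frobenius reciprocity produces an $I_\gamma$-equivariant injection $V \hookrightarrow Ind_{I_{\gamma_c}^{(n)}}^{I_\gamma}(W)$, which induces up via exactness and transitivity to $Ind_{I_\gamma}^{S_n}(V) \hookrightarrow Ind_{I_{\gamma_c}^{(n)}}^{S_n}(W)$. Mackey's formula then identifies $Ind_{I_{\gamma_c}^{(n)}}^{S_n}(W)$ with the direct summand of $Res_{S_n}^{S_{n+2}} Ind_{I_{\gamma_c}}^{S_{n+2}}(W)$ indexed by the double coset $S_n \cdot I_{\gamma_c}$. Composing these maps yields the desired injection, and the branching argument closes the proof. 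The most delicate step will be the character comparison, since one must track the sign of an automorphism fixing $e$ through coinvariant quotients by the (generally distinct) groups $Aut(\gamma) \supset Aut(\gamma_c)$ and verify that elements of $Aut(\gamma) \setminus Aut(\gamma_c)$, which may send $e$ to a different edge, do not alter the $I_{\gamma_c}^{(n)}$-character on $V$.
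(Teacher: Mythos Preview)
Your approach is correct and essentially the same as the paper's. Your intermediate subgroup $I_{\gamma_c}^{(n)}$ is exactly the paper's $J=I_\gamma\cap I_{\gamma_c}$, and your Mackey step is the paper's Step~1. The only cosmetic difference is that where the paper uses the counit surjection $Ind_J^{I_\gamma}Res_J^{I_\gamma}V\twoheadrightarrow V$ together with a surjection $W|_J\twoheadrightarrow V|_J$ (Steps~2--3) to bound rows from above, you instead use the adjoint injection $V\hookrightarrow Ind_J^{I_\gamma}(W|_J)$ via Frobenius reciprocity; since everything is semisimple these carry the same information, and your observation that $V\neq 0$ forces $W\neq 0$ and $V|_J\cong W|_J$ is precisely what makes the paper's surjection in Step~3 an isomorphism.
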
 
\begin{proof}
	We use the following short hand notation: $I:=I_\gamma$ and $I_c:=I_{\gamma_c}$.  We then define $J:=I\cap I_c$.  Observe that $J$ can be equivalently defined in either of the following two ways.  First, $J$ is the subgroup of $I$ consisting of those permutations $\sigma$ for which there exists an isomorphism $\sigma\gamma \to \gamma$ which fixes both flags belonging to the edge $e$.  Second, $J$ is the subgroup of $I_c$ which fixes $n+1$ and $n+2$, i.e.\ $J = I_c\cap S_n$.

	{\bf Step 1:}  There is an injection between the cosets $ S_n/J \to S_{n+2} /I_c, $
	induced by the inclusion $S_n\to S_{n+2}$. 
	This injection on cosets induces an injective $S_n$-equivariant map
	\begin{equation}\label{inj1}
		Ind_{J}^{S_n} Res^{I_c}_{J} det(\gamma_c)_{Aut(\gamma_c)} \hookrightarrow Res^{S_{n+2}}_{S_n} Ind_{I_c}^{S_{n+2}} det(\gamma_c)_{Aut(\gamma_c)} 
	\end{equation}
	where we view $det(\gamma_c)_{Aut(\gamma_c)}$ as an $I_c$ module as in the proof of Lemma $\ref{indlem}$.  We remark that the existence of this injection can be seen by restricting \cite[Proposition 7.3]{Serre} to the double coset represented by the identity element, so we omit a detailed verification.

	{\bf Step 2:} The counit of the adjunction $Ind_{J}^{I} \dashv Res_{J}^{I}$ is a surjective map $$ Ind_{J}^{I}
	( Res_{J}^{I}( det(\gamma)_{Aut(\gamma)})) 	\twoheadrightarrow	det(\gamma)_{Aut(\gamma)}.$$  Applying the functor $Ind_I^{S_n}$ to this surjection results in a surjection:
	\begin{equation}\label{surj1}
		Ind_{J}^{S_n}(Res_{J}^{I} (det(\gamma)_{Aut(\gamma)}))  =	Ind_{I}^{S_n}(Ind_{J}^{I}
		( Res_{J}^{I} (det(\gamma)_{Aut(\gamma)})) )	\twoheadrightarrow	Ind^{S_n}_I(det(\gamma)_{Aut(\gamma)})
	\end{equation}

	{\bf Step 3:}  View the group $J$ as the permutations which have an iso $\sigma\gamma \to \gamma$ which fixes $e$.  From this we establish a surjection
	$Res_{J}^{I_c}(det(\gamma_c)_{Aut(\gamma_c)}) \twoheadrightarrow
	Res_{J}^{I}( det(\gamma)_{Aut(\gamma)}),
	$
	hence a surjection
	\begin{equation}\label{iso1}
	Ind_{J}^{S_n} ( Res_{J}^{I_c}(det(\gamma_c)_{Aut(\gamma_c)}) \twoheadrightarrow
		Ind_{J}^{S_n} ( Res_{J}^{I}( det(\gamma)_{Aut(\gamma)}).
	\end{equation}

	{\bf Step 4:}  The number $\rho_{\gamma_c}$ counts the maximum number of rows appearing in an irreducible subspace of $Ind_{I_c}^{S_{n+2}} det(\gamma_c)_{Aut_{\gamma_c}}$.  This number is greater than or equal to the number of rows appearing in the target of morphism $\ref{inj1}$, which is in turn greater than or equal to the number appearing in the source of morphism $\ref{inj1}$, since this morphism is an injection.  This latter number is also the number of rows appearing in the source of Equation $\ref{iso1}$ and hence greater than the number of rows appearing in the target, since this morphism is a surjection.  This latter number is also the number of rows appearing in the source of Equation $\ref{surj1}$.  Since the morphism in Equation $\ref{surj1}$ is also surjection, this latter number is greater than the number of rows appearing in the target which, by definition, is $\rho_\gamma$.  
\end{proof}

\section{Representation stability for $B(g,n,r)$.}  

The goal of this section is to prove our main theorem -- that the graph complexes $B(g,n,r)$ are representation stable with the given sharp stability bound.  To begin, we define the stabilization map.

Given a marked graph $\gamma \in \Gamma(g,n,r)$ we may form a new marked graph, call it $\gamma\wedge l_{n+1} \in \Gamma(g,n+1,r+1)$ by adding a new marked leg to $\gamma$ at the distinguished vertex which is labeled by $n+1$.  Under the convention that this marked leg is added in the last position, this correspondence induces an isomorphism $det(\gamma)\to det(\gamma\wedge l_{n+1})$.  This isomorphism is equivariant with respect to the automorphism group actions on both sides (note the automorphism groups of these two graphs are canonically isomorphic).  Extending linearly defines a map $\psi_{g,n,r}\colon B(g,n,r) \to B(g,n+1,r+1)$.  Note that by our degree conventions each $\psi_{g,n,r}$ is degree $0$, and in addition preserves the top non-zero degree of the chain complexes, namely $m:=3(g-1)+2(n-r)$.


\begin{lemma}\label{consist}  For each $g$ and $\ell$,
$$\dots\to B(g,n,n-\ell) \stackrel{\psi}\to B(g,n+1,n+1-\ell) \to \dots$$
is a consistent sequence of chain complexes.
\end{lemma}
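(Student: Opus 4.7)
The claim requires two things, following the informal definition of a consistent sequence of chain complexes given just before Definition \ref{rsdefch}: for each $n$, the map $\psi = \psi_{g,n,n-\ell}$ must be (i) $S_n$-equivariant in every homological degree, and (ii) a chain map, i.e.\ $\partial\circ\psi=\psi\circ\partial$. My plan is to unwind the definition of $\psi$ summand by summand and compare directly with the combinatorial formula for the differential.

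The equivariance is essentially immediate. The map $\psi$ sends the summand indexed by $[\gamma]\in\Gamma(g,n,n-\ell)$ to the one indexed by $[\gamma\wedge l_{n+1}]$ via the canonical isomorphism $det(\gamma)\to det(\gamma\wedge l_{n+1})$ obtained by appending the new marked leg in the final position of the wedge of marked flags. Under the identification $S_n\subset S_{n+1}$ as the stabilizer of $n+1$, an element $\sigma\in S_n$ only permutes the legs labeled by $\{1,\dots,n\}$; permuting those legs and then appending the leg labeled $n+1$ produces the same marked graph, and the same element of $det$, as appending first and permuting after. Passage to $Aut$-coinvariants is harmless since $Aut(\gamma)$ and $Aut(\gamma\wedge l_{n+1})$ are canonically identified.

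The substantive content is (ii). I would fix a marked graph $\gamma$ and decompose $\partial$ on both $det(\gamma)$ and $det(\gamma\wedge l_{n+1})$ into its two kinds of summands, $\partial_e$ for edges $e$ and $(-1)^{|E_\gamma|}\partial_f$ for unmarked flags $f$ at the DV. The key observation is that adjoining $l_{n+1}$ as a marked leg at $v$ changes neither the edge set, nor the set of tadpoles, nor the set of unmarked flags at $v$, nor the cardinality $|E_\gamma|$ (so the sign $(-1)^{|E_\gamma|}$ agrees on both sides). Hence the index sets over which $\partial$ is summed match under $\psi$, and it suffices to check that each corresponding pair of summands commutes with $\psi$ on the nose. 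This is a direct check: $\partial_e$ removes $e$ from the rightmost slot of the $E$-wedge and, when $e$ carries a marked half-edge at $v$, replaces it by some $f_i\in N$ in the same slot of the $D$-wedge; meanwhile $\partial_f$ inserts $f$ in the leftmost slot of the $D$-wedge. In every case $l_{n+1}$ occupies the rightmost slot of the $D$-wedge throughout and is untouched, so postpending it commutes with both types of operation.

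The step requiring the most care, and the one most likely to hide a sign error, is this last compatibility: verifying that the convention ``$l_{n+1}$ in the last position'' lines up correctly with the ``drop-on-the-right'' convention for $\partial_e$ and the ``insert-on-the-left'' convention for $\partial_f$, and that no extra sign creeps in from the factor $(-1)^{|E_\gamma|}$ or from the edge-contraction case where a marked half-edge is replaced. Once this is verified on a single representative basis element of $det(\gamma)$, extending linearly and descending to coinvariants completes the argument.
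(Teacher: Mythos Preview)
Your proposal is correct and follows essentially the same approach as the paper: both treat equivariance as immediate, observe that $\psi$ alters neither the edge set nor the unmarked flags at the distinguished vertex (so the index sets and the $(-1)^{|E_\gamma|}$ factors match), and then isolate the sign check $\partial_f\psi=\psi\partial_f$ as the one nontrivial point, resolving it by the leftmost/rightmost wedge conventions. The paper's proof is terser but the logic is the same.
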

\begin{proof}
The fact that the maps $\psi_{g,n,r}$ are $S_n$ equivariant is immediate.  It remains to verify that each $\psi_{g,n,r}$ is a chain map.  The differential is given by a sum over edge contractions and flag markings, so since $\psi$ changes neither the set of edges nor the set of unmarked flags, the differential terms correspond.  It thus remains to verify that each differential term commutes with $\psi$.  The only subtlety in this verification lies in verifying that our sign conventions were chosen such that $\partial_f \psi = \psi \partial_f$.  But since $\partial_f$ places $f$ in the first/leftmost position of the wedge product and $\psi$ adds a leg $l_{n+1}$ in the last/rightmost position, both are equal to $f \wedge - \wedge  l_{n+1}$. 
\end{proof}

This is the family of consistent sequences whose representation stability, after tensoring with the sign representation, will be our subject of our main theorem:

\begin{theorem}\label{mainthm}  Fix $g$ and $\ell$ with $m:=3(g-1) +2\ell \geq 0$.  The consistent sequence
	$$\dots \to B(g,n,n-\ell)\tensor V_{1^n} \stackrel{\psi}\to B(g,n+1,n+1-\ell)\tensor V_{1^{n+1}} \to \dots $$
	is representation stable.  Furthermore, this representation stable sequence stabilizes sharply at
	$$
	n =	\left\lceil \frac{9(g-1)}{2}\right\rceil +3\ell = \left\lceil \frac{3m}{2}\right\rceil.
	$$

\end{theorem}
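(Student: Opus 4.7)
The plan is to decompose $B(g,n,n-\ell)$ into a direct sum of induced sequences in the format of Definition $\ref{widecheck}$, invoke Lemmas $\ref{rhos}$ and $\ref{directsumlem}$ to read off the sharp stability bound, and then establish that this bound equals $\lceil 3m/2\rceil$ by an induction on $g$ using Lemma $\ref{rowlem}$.

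First I parametrize $S_n$-orbits by their \emph{leg-stripped cores}: for a marked graph $\gamma$, its core $\bar\gamma_\emptyset$ is the unlabeled marked graph of type $(g,u,j)$ obtained by forgetting leg labels and removing all $k$ marked legs. Every graph with core $\bar\gamma_\emptyset$ arises by choosing a labeling of its $u$ unmarked legs from $\{1,\dots,n\}$ and reintroducing $k=n-u$ marked legs carrying the remaining labels. A computation with the determinant, hinging on the fact that the $k$ added marked legs sit in $D$ and so transform under $S_k$ via the sign representation, shows that the total contribution of all graphs with fixed core is exactly $\widehat{Z_{\bar\gamma_\emptyset}}(n)$, where $Z_{\bar\gamma_\emptyset}:=Ind_{I_{\gamma_\emptyset}}^{S_u}(det(\gamma_\emptyset)_{Aut(\gamma_\emptyset)})$ is the $S_u$-module attached to any chosen labeling of $\bar\gamma_\emptyset$. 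Therefore $B(g,n,n-\ell)\cong\bigoplus_{\bar\gamma_\emptyset:\,u-j\leq\ell}\widehat{Z_{\bar\gamma_\emptyset}}(n)$ as consistent sequences, and Lemmas $\ref{rhos}$ and $\ref{directsumlem}$ identify the sharp stability bound as $N=\max(u+\rho_{\gamma_\emptyset})$ taken over such cores.

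It then suffices to establish the intrinsic inequality $u+\rho_{\gamma_\emptyset}\leq\lceil 9(g-1)/2\rceil+3(u-j)$ for every core of genus $g$, by induction on $g$. The inductive step chooses a non-disconnecting edge $e$ of $\bar\gamma_\emptyset$ and invokes Lemma $\ref{rowlem}$ to pass to the cut graph $\gamma_{\emptyset,c}$ of genus $g-1$. When $e$ is unmarked, $\gamma_{\emptyset,c}$ is again a stripped core with parameters $(u+2,j)$, the inductive hypothesis applies directly, and the arithmetic increment $\lceil 9(g-1)/2\rceil-\lceil 9(g-2)/2\rceil\in\{4,5\}$ exactly absorbs the cost of unwinding the cut. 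The base case $g=1$, where the stripped core is a tree admitting no non-disconnecting edge, is handled by a direct combinatorial analysis of how $Aut(\gamma_\emptyset)$ acts on the unmarked legs, together with the constraint that neutral vertices carry at least three flags.

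For sharpness I would exhibit a core achieving equality. At $g=1$ the canonical example is the ``$\ell$-leg star'' consisting of $\ell$ arms, each a neutral vertex with two unmarked legs joined to the distinguished vertex by a single marked edge: here $u=2\ell$, $j=\ell$, the group $Aut\cong S_2\wr S_\ell$ acts trivially on the determinant, and $Ind_{S_2\wr S_\ell}^{S_{2\ell}}(\mathbb{Q})=\bigoplus_{\mu\vdash\ell}V_{2\mu}$ attains $\rho=\ell$ through $V_{(2^\ell)}$, so $u+\rho=3\ell$ realizes the bound. For larger $g$ one grafts further cycles onto this construction; e.g., at $g=2$, $\ell=0$, a core with two parallel marked edges to one neutral vertex (carrying one unmarked leg) and a single marked edge to a second neutral vertex (carrying two unmarked legs) gives $u+\rho=5=\lceil 9/2\rceil$. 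The main obstacle will be the inductive step in the ``all-marked'' regime, where every non-disconnecting edge of the stripped core carries a marked flag (all cycles pass through the distinguished vertex and have length $1$ or $2$). Cutting such an edge necessarily introduces a marked leg in $\gamma_{\emptyset,c}$, and the identity $\rho_{\gamma_{\emptyset,c}}=\rho_{\gamma_{\emptyset,c}^{\mathrm{stripped}}}+1$ (from inducing along $S_{u+1}\subset S_{u+2}$) means that the naive combination of Lemma $\ref{rowlem}$ with the inductive hypothesis for $\gamma_{\emptyset,c}^{\mathrm{stripped}}$ overshoots the target. Closing this gap requires a sharpened analysis exploiting the canonical correspondence between the removed marked edge-flag of $e$ and the marked leg that the cut creates, yielding the stronger inequality in the marked-cut case.
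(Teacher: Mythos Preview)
Your overall strategy matches the paper's: decompose over core graphs $\xi$ into sequences $\widehat{A}_\xi$, apply Lemmas $\ref{rhos}$ and $\ref{directsumlem}$, then bound $u+\rho_\xi$ by induction on $g$ via Lemma $\ref{rowlem}$. Your base case sketch and your sharpness witnesses (the $\ell$-leg star for $g$ odd, and your $g=2$ example) are exactly the paper's $\theta_{g,\ell}(0)$ and $\theta_{g,\ell}(1)$.

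The gap is precisely where you locate it, but the paper's resolution is \emph{not} the sharpened marked-edge cut you propose. When no unmarked non-disconnecting edge exists, the paper abandons Lemma $\ref{rowlem}$ and splits into two subcases. If $\xi$ has a (necessarily marked) tadpole $e$, one simply deletes it to obtain a core of type $(g-1,u,j-1)$, checks $\rho_\xi\leq\rho_{\xi\setminus e}$ from the inclusion $I_{\gamma\setminus e}\subset I_\gamma$, and applies the induction hypothesis; the gain of $1$ in $u-j$ more than offsets the drop in $g$. If there is no tadpole, then every circuit has length exactly two and passes through the distinguished vertex, so the neutral neighbours $v_1,\dots,v_k$ of the DV with edge-multiplicities $e(v_i)$ satisfy $r\leq\sum e(v_i)$ and $\sum(e(v_i)-1)\leq\beta$, whence $r\leq k+\beta$. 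Partitioning these neighbours according to whether their leg-block has size $\geq 2$ (there are $s+q$ of these) or exactly $1$ with $e(v_i)=2$ (there are $t$ of these), one combines the $(S_2)^{s+q}$ argument from the tree base case with an $S_t$ symmetry permuting the doubly-attached single-leg branches---which acts trivially on the determinant---to obtain $\rho_\xi\leq u-s-t-q+1$. Together with Lemma $\ref{satlemma}$ this gives $u+\rho_\xi\leq 4\beta+3(u-j)+1\leq\lceil 4.5\beta\rceil+3(u-j)$, \emph{without} invoking the induction hypothesis at all. Your alternative---cut a marked edge and track the resulting marked leg---would have to manufacture an extra unit of slack that the bare combination of Lemma $\ref{rowlem}$ with the stripped inductive bound does not supply, and you have not shown how (nor does it address the tadpole case, where cutting is not even the right operation).
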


The remainder of this section will be dedicated to the proof of Theorem $\ref{mainthm}$.

\subsection{Core Graphs}
To prove Theorem $\ref{mainthm}$ we will decompose each $B(g,n,n-\ell)_i$ over what we call core graphs.   To begin the proof we introduce and study this notion.  Note that the terminology ``core graph'' appears in the literature in other contexts, but here-in it is defined by:

\begin{definition}  An unlabeled marked graph $\xi$ is called a core graph if it has no marked legs.  
\end{definition}

Let us define a family of core graphs which will play a key part in the identification of the sharp stability point.  Fix $g$ and $\ell$ with $m:= 3(g-1)+2\ell\geq 0$.  Notice that $g$ and $m$ have opposite parity.  For each non-negative integer $p$ having the opposite parity as $g$ (hence the same parity as $m$) and satisfying $p < g$ and $p \leq m$ we construct a core graph of type $(g,m,m-\ell)$ via the following four steps:
\begin{enumerate}
	\item Connect a distinguished vertex $v_0$ to $p$ neutral vertices with a pair of parallel edges to each.  Add a leg at each of these $p$ neutral vertices, ensuring stability.
	\item Define $t = (g-1-p)/2$, a non-negative integer by assumption.  Connect the distinguished vertex to $t$ additional neutral vertices by $3$ edges each.
	\item Define $y = (m-p)/2$, a non-negative integer by assumption. Connect $v_0$ to $y$ additional neutral vertices by a single edge to each.  Then add a pair of legs to each of these $y$ neutral vertices to ensure stability.
	\item Mark each of the $y+2p+3t$ edges of this graph, forming an unlabeled marked graph of type $(p+2t+1, 2y+p, y+2p+3t)$.
\end{enumerate}
The graph thus formed has no marked legs, hence is a core graph, and is easily seen to be of type $(g,m,m-\ell)$.  

\begin{definition}\label{theta}  Fix integers $g,\ell,p$ with $g$ and $p$ of opposite parity and satisfying $0\leq p<g$ and $ p \leq m$.  The marked graph of type $(g,m,m-\ell)$ constructed above is denoted  $\theta_{g,\ell}(p)$.  See Figure $\ref{fig:sat2}$.
\end{definition}

The first manifestation of stability is that eventually (as $n$ increases but $m = 3(g-1)+2(n-r)$ stays fixed) there are no core graphs, as we now establish: 

\begin{lemma}\label{satlemma} If $\xi$ is a core graph of type $(g,n,r)$ then $$ n \leq m := 3(g-1) + 2(n-r).$$  Moreover, $n=m$ if and only if $\xi\cong \theta_{g,\ell}(p)$ for some $p$.
\end{lemma}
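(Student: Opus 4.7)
The plan is to prove $2r \le n + 3(g-1)$ (which is equivalent to $n \le m$) by bounding $r$ edge-by-edge and then using the valence constraint coming from stability; the equality case then drops out by tracing back each inequality. Let $k$ denote the number of neutral vertices of $\xi$ and partition its edge set as $E = E_v \sqcup E'$, where $E_v$ is the set of edges incident to the distinguished vertex $v$ (comprising $t$ tadpoles and $a$ non-tadpole edges) and $E'$ is the set of edges joining two neutral vertices. Because $\xi$ is a core graph, every marked flag is a half-edge at $v$, and the axiom ``no double marked tadpoles'' together with the fact that a non-tadpole edge of $E_v$ has only one flag at $v$ implies that each edge of $E_v$ carries at most one marked flag. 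Hence $r \le |E_v| = t + a$, which by the Euler relation $|E| = g - 1 + k$ rewrites as $r \le g - 1 + k - |E'|$.

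For the complementary bound I would count valences at neutral vertices two ways: stability gives $\sum_i d_i \ge 3k$, while a direct count gives $\sum_i d_i = 2|E'| + a + (n - l_v)$, where $l_v$ is the number of legs at $v$. Substituting the Euler relation $a = g - 1 + k - |E'| - t$ and rearranging $\sum_i d_i \ge 3k$ yields
\[
2k - |E'| + t + l_v \;\le\; n + g - 1.
\]
Dropping the non-negative terms $t, l_v, |E'|$ on the left gives $2(k - |E'|) \le n + g - 1$, so doubling the bound $r \le g - 1 + k - |E'|$ produces $2r \le 2(g-1) + 2(k - |E'|) \le 3(g-1) + n$, which rearranges to $n \le m$.

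For the equality case I would trace back each step. Equality forces simultaneously that every edge of $E_v$ is marked with exactly one flag ($r = t + a$), that every neutral vertex is trivalent ($\sum_i d_i = 3k$), and that $t = l_v = |E'| = 0$. Thus each neutral vertex $w_i$ is connected to $v$ by some $h_i \in \{1, 2, 3\}$ parallel edges and carries $3 - h_i$ legs, with every DV-edge marked. Letting $n_j = |\{i : h_i = j\}|$ and setting $p := n_2$, the identities $g - 1 = n_2 + 2 n_3$ and $n = 2 n_1 + n_2 = m$ force $0 \le p < g$ with $p$ of the same parity as $g-1$ and $p \le m$, matching exactly the combinatorial data of $\theta_{g,\ell}(p)$ from Definition \ref{theta}. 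The main subtlety throughout is the simultaneous use of all three admissibility axioms together with the core graph condition: stability underlies the main inequality, while the two tadpole axioms are what pin the equality case onto the specific parallel-edge-plus-leg structure of the $\theta_{g,\ell}(p)$ family, in particular forcing $t = 0$ at the extremum.
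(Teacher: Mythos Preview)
Your proof is correct, and it reaches the same conclusion as the paper by a slightly different route. The paper argues via a two-sided global flag count,
\[
3(|V|-1)+r \;\le\; |F| = 2|E|+n \;\le\; 3|E| + (n-r),
\]
the left coming from stability (each neutral vertex has at least $3$ flags, the DV at least $r$) and the right from the fact that each marking sits on a distinct edge. You instead decompose $E = E_v \sqcup E'$, bound $r \le |E_v|$, and run a valence count at neutral vertices. Both arguments are elementary and of comparable length for the inequality; the paper's is a touch more streamlined, while yours keeps closer track of the individual combinatorial pieces $(t, l_v, |E'|)$.

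Where your organization pays off is in the equality case. Because you carry $t$, $l_v$, and $|E'|$ explicitly through the chain of inequalities, the condition $t = l_v = |E'| = 0$ falls out immediately when you trace equality back. The paper, by contrast, gets ``every edge marked'' and ``every neutral vertex trivalent'' from its two flag inequalities, but then needs a separate step to rule out tadpoles: it removes a putative tadpole to produce a core graph of type $(g-1,n,r-1)$ with excess $n-1$, contradicting the already-proven inequality. That removal argument is slick but adds a small detour; your approach avoids it.
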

\begin{proof}  Let $\xi$ be a core graph of type $(g,n,r)$.
	On the one hand, by our exclusion of doubly-marked tadpoles, if $\xi$ has no marked legs then every marking is on a distinct edge.   Hence $$|F|=2|E|+n = 2|E| + n-r + r \leq 3|E| + (n-r).$$	
	On the other hand, by stability, then using $|E|-|V|=\beta-1=g-2$, gives us
	$$|F| \geq 3(|V|-1) + r = 3|E|-3\beta +r.$$
	Combining these inequalities establishes the first statement.
	
		For the second statement, the ``if'' implication is immediate, so suppose that $m=n$.  This implies that both inequalities above are equalities and hence would require both that $r = |E|$ and that every neutral vertex is trivalent.  Since $\xi$ is a core graph, $r=|E|$ implies that every edge is marked, hence adjacent to the distinguished vertex.  Each neutral vertex must then be connected to the distinguished vertex via 1,2, or 3 edges and adjacent to 2,1, or 0 legs respectively.  The number of such neutral vertices of $\xi$ of each type will correspond to the statistics $y,p,t$ in the construction of $\theta_{g,\ell}(p)$ above.  In particular, let $p$ be the number of neutral vertices connected to the distinguished vertex by a pair of parallel edges.  Then $y = (n-p)/2 =(m-p)/2$ as desired.  
	
	It remains to verify that $t = (g-1-p)/2$.  Suppose that $\xi$ has $q$ tadpoles.  Then the genus of $\xi$ is $2t+q+p+1$, so it suffices to show that $q=0$.  But if $\xi$ had a tadpole, we could remove it to find another core graph of type $(g-1,n,r-1)$ and excess $3((g-1)-1) + 2(n-(r-1)) = n-1$, which contradicts the first statement of the Lemma, as proven above. \end{proof}

\begin{figure}
	\centering
	\includegraphics[scale=.6]{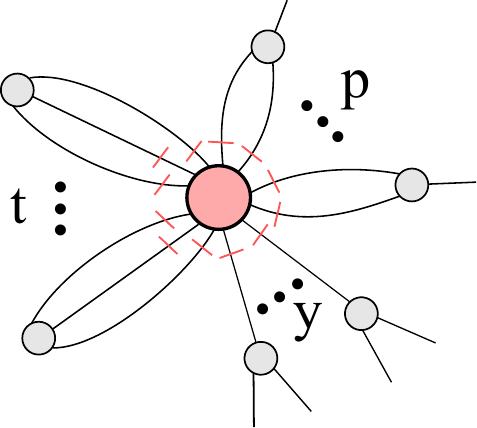}
	\caption{The core graph $\theta_{g,\ell}(p)$ with distinguished vertex in red. The red tic marks denote the marked flags.  The ...\ refers to the number of copies of said feature appearing in the graph.} 
	\label{fig:sat2}
\end{figure}

\begin{corollary}\label{legscor} Let $\gamma$ be a marked graph of type $(g,n,r)$ with $n > m= 3(g-1) + 2(n-r)$.  Then $\gamma$ has at least $n-m$ marked legs.  
\end{corollary}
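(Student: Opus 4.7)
The plan is to reduce to Lemma \ref{satlemma} by stripping the marked legs away from $\gamma$. Suppose $\gamma\in\Gamma(g,n,r)$ has exactly $k$ marked legs; I want to show $k\geq n-m$.

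First I would construct from $\gamma$ a new graph $\gamma'$ by deleting each marked leg (equivalently, removing each fixed point of $\iota$ that lies in $D$). Since marked legs are half-edges attached only to the distinguished vertex $v$, deleting them does not disconnect the graph, does not create or destroy tadpoles, and does not affect the valence of any neutral vertex, so admissibility conditions (stability, no neutral tadpoles, no doubly marked tadpoles) are automatically inherited. The genus $\beta+1 = g$ is unchanged since $\beta$ depends only on edges and vertices. Thus $\gamma'$ is an admissible marked graph of type $(g,n-k,r-k)$. By construction $\gamma'$ has no marked legs, so it is a core graph in the sense of Section 4.

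Next, observe that the excess of $\gamma'$ agrees with that of $\gamma$:
$$3(g-1) + 2\bigl((n-k)-(r-k)\bigr) = 3(g-1) + 2(n-r) = m.$$
Now Lemma \ref{satlemma} applies to the core graph $\gamma'$ of type $(g,n-k,r-k)$ and tells us that $n-k \leq m$, i.e.\ $k \geq n-m$, which is the desired conclusion.

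I do not anticipate a genuine obstacle here; this is essentially a bookkeeping corollary of Lemma \ref{satlemma}. The only point that needs a moment of care is verifying that stripping marked legs preserves admissibility (in particular that removing flags adjacent to the distinguished vertex is harmless, which uses the fact that $v$ is exempt from the stability constraint $|a^{-1}(w)|\geq 3$). Once this is noted, the reduction to a core graph with the same excess is immediate and the inequality follows directly.
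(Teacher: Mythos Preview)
Your argument is correct and is exactly the intended one: the paper states this corollary without proof immediately after Lemma~\ref{satlemma}, and your construction of $\gamma'$ is precisely the ``core'' operation $\epsilon(\gamma)$ defined later in Section~4, so the deduction is just Lemma~\ref{satlemma} applied to $\epsilon(\gamma)$. Your care in checking that removing marked legs preserves admissibility (using that the distinguished vertex is exempt from the valence condition) is appropriate and the only point requiring comment.
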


\subsection{Core sequences}  Write $B(g,n,r)_i$ for the degree $i$ piece of the chain complex $B(g,n,r)$.   Core graphs will index a direct sum decomposition of the
consistent sequence
$(B(g,n,n-\ell)_i,\psi)$
as we now indicate.
	
\begin{definition}	
	Given a marked graph $\gamma$ we define the core of $\gamma$, denoted $\epsilon(\gamma)$ to be the core graph formed by forgetting the leg labels and removing any marked legs.  In particular
	$F_{\epsilon(\gamma)} = F_\gamma \setminus (D_\gamma\cap L_\gamma)$, in the notation of Section $\ref{markedgraphsec}$.
\end{definition}
Given a core graph $\xi$ of type $(g,k,s)$ we define the graded $S_k$-module
$$ A_{\xi}:= 
\bigoplus_{\substack{ [\gamma] \in \text{Iso}\Gamma(g,k,s) \\ \text{such that } \epsilon(\gamma) \cong\xi}  } 
\Sigma^{k} det(\gamma)_{Aut(\gamma)}.
$$
Note that $A_{\xi}$ is concentrated in a single degree $i:= |E_\xi|+k-s$.  We then define the consistent sequence $\widehat{A}_\xi$ as above (Definition $\ref{widecheck}$).  In particular for each $n\geq k$,
$\widehat{A}_\xi(n) := A_{\xi} \circ V_{1^{n-k}}$.

\begin{proposition}\label{directsumprop} 
	The $S_n$-module $B(g,n,r)_i$ decomposes as $B(g,n,r)_i \cong \oplus \widehat{A}_\xi(n)$, with direct sum taken over isomorphism classes of core graphs of degree $i$ and type $(g,k,u)$ for $k-u\leq n-r$ and $k\leq n$.	
\end{proposition}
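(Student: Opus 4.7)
The plan is to group the summands of $B(g,n,r)_i$ by the isomorphism class of the core $\epsilon(\gamma)$, identify each block with a single $S_n$-orbit, and recognize the resulting induced representation as $\widehat{A}_\xi(n)$ via induction in stages. First I would verify that partitioning by core is well-defined and $S_n$-equivariant: $\gamma\cong \gamma'$ implies $\epsilon(\gamma)\cong\epsilon(\gamma')$, and the $S_n$-action on leg labels commutes with erasing marked legs, yielding an $S_n$-equivariant decomposition $B(g,n,r)_i = \bigoplus_{[\xi]} B_{[\xi]}$. Bookkeeping pins down the range of $[\xi]$: if $\gamma$ has type $(g,n,s)$ with $s\geq r$ and $\xi=\epsilon(\gamma)$ has type $(g,k,u)$, reconstructing $\gamma$ from $\xi$ by attaching $n-k$ marked legs at the distinguished vertex forces $s-u=n-k$, so $s\geq r$ becomes $k-u\leq n-r$, and of course $k\leq n$. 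The degree identity $|E_\gamma|+n-s = |E_\xi|+k-u$ further guarantees each $\gamma$ with core $\xi$ lies in degree $i=|E_\xi|+k-u$.

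For fixed $\xi$ of type $(g,k,u)$, the block $B_{[\xi]}$ is a single $S_n$-orbit, since every $\gamma$ with $\epsilon(\gamma)\cong\xi$ is obtained by labeling the $n$ legs of the underlying marked graph ``$\xi$ with $n-k$ marked legs attached at the distinguished vertex'' bijectively by $\{1\cdc n\}$, and any two labelings differ by a permutation in $S_n$. Picking a canonical $\gamma_0$ with $\xi$'s legs labeled $\{1\cdc k\}$ and its marked legs labeled $\{k{+}1\cdc n\}$, and the corresponding $\bar\gamma\in\Gamma(g,k,u)$ obtained from the same labeling of $L_\xi$, the key step is identifying the stabilizer $I_{\gamma_0}=I_{\bar\gamma}\times S_{n-k}$: any $\sigma\in S_n$ with $\sigma\gamma_0\cong \gamma_0$ must preserve the partition $\{1\cdc k\}\sqcup\{k{+}1\cdc n\}$ (marked versus unmarked legs are distinguishable), must act on the first block through an element of $Aut(\xi)$ via the chosen labeling, and may act freely on the second block (each transposition of marked legs is realized by a graph automorphism). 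A parallel argument gives $Aut(\gamma_0)=Aut(\bar\gamma)$, both equal to the subgroup of $Aut(\xi)$ fixing each leg individually, while the set identities $E_{\gamma_0}=E_\xi$ and $D_{\gamma_0}=D_\xi \sqcup (L_{\gamma_0}\setminus L_{\bar\gamma})$ produce an $I_{\gamma_0}$-equivariant factorization $det(\gamma_0)\cong det(\bar\gamma)\tensor det^{-1}(L_{\gamma_0}\setminus L_{\bar\gamma})$, whose second factor is the sign representation of $S_{n-k}$ placed in degree $-(n-k)$.

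Combining Lemma $\ref{indlem}$ with the identifications above,
\[
B_{[\xi]} \;=\; \Sigma^n Ind_{I_{\gamma_0}}^{S_n} det(\gamma_0)_{Aut(\gamma_0)} \;\cong\; \Sigma^{k} Ind_{I_{\bar\gamma}\times S_{n-k}}^{S_n}\big(det(\bar\gamma)_{Aut(\bar\gamma)}\boxtimes V_{1^{n-k}}\big),
\]
absorbing the $\Sigma^{-(n-k)}$ from the sign factor into the degree shift. The same lemma together with induction in stages expresses
\[
\widehat{A}_\xi(n) \;=\; A_\xi \circ V_{1^{n-k}} \;\cong\; \Sigma^{k} Ind_{I_{\bar\gamma}\times S_{n-k}}^{S_n}\big(det(\bar\gamma)_{Aut(\bar\gamma)}\boxtimes V_{1^{n-k}}\big),
\]
matching $B_{[\xi]}$ term-for-term, so summing over $[\xi]$ finishes the proof. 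The main obstacle is the stabilizer computation $I_{\gamma_0}=I_{\bar\gamma}\times S_{n-k}$ together with the clean factorization of $det(\gamma_0)$, since the $Aut(\xi)$-ambiguity in the labels of the unmarked legs and the sign twist from permuting the marked legs must interact correctly with the sign representation built into $\widehat{A}_\xi(n)$. The remaining assembly is a routine application of Frobenius reciprocity and careful tracking of the degree shifts $\Sigma^n$ versus $\Sigma^k$.
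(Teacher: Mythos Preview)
Your proposal is correct and follows essentially the same approach as the paper: partition the summands of $B(g,n,r)_i$ by the isomorphism class of the core, identify the stabilizer of a canonical representative as $I_{\bar\gamma}\times S_{n-k}$ with $S_{n-k}$ acting by sign on the marked legs, and recognize the resulting induced representation as $A_\xi\circ V_{1^{n-k}}=\widehat{A}_\xi(n)$ via induction in stages. Your write-up is slightly more explicit than the paper's in tracking the factorization $det(\gamma_0)\cong det(\bar\gamma)\otimes det^{-1}(L_{\gamma_0}\setminus L_{\bar\gamma})$ and the interaction of the degree shifts $\Sigma^n$ versus $\Sigma^k$, but the argument is the same.
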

\begin{proof}
	By definition
\begin{equation}\label{bdef}B(g,n,r)_i = \bigoplus_{\substack{[\gamma]  \text{ of type } (g,n,s) \text{ with }\\   s\geq r \text{ and } i=|E_\gamma|+n-s}} \Sigma^{n} det(\gamma)_{Aut(\gamma)}.\end{equation}
	
For each isomorphism class $[\gamma]$ indexing this sum, we can take the core of any representative $\epsilon(\gamma)$, then pass to the isomorphism class of unlabeled graphs $[\epsilon(\gamma)]$.  If $\gamma$ was of type $(g,n,s)$ and degree $i$, then the core graph is of type $(g,n-t,s-t)$ and degree $i$, where $t$ is the number of marked legs on $\gamma$.
Setting $k=n-t$, we can nest the direct sum in Equation $\ref{bdef}$ as:
\begin{equation}\label{bdef2}B(g,n,r)_i = \bigoplus_{\substack{[\xi]  \text{ core graphs }\\ \text{ of type } (g,k,u) \text{ with }  k\leq n,  \\ u+n-k\geq r   \text{ and } i=|E_\xi|+k-u}} \left( \bigoplus_{\substack{[\gamma] \text{ with }\\  \xi\cong\epsilon(\gamma) \\ \text{ and } |L_\gamma|=n}} \Sigma^{n} det(\gamma)_{Aut(\gamma)}\right).\end{equation}

Fix a core graph $\xi$ indexing the outer summand.   To complete the proof it suffices to identify $\widehat{A}_\xi(n) = A_\xi \circ V_{1^{n-k}}$ with the direct sum inside parentheses in Equation $\ref{bdef2}$.  Label the $k$ legs of $\xi$.  Call the resulting marked graph $\eta\in \Gamma(g,k,u)$.  Lemma $\ref{indlem}$ gives an isomorphism 
$$A_\xi \cong Ind^{S_k}_{I_\eta} (det(\eta)_{Aut(\eta)}),$$	 induced by the $I_\eta$-equivariant map $det(\eta)_{Aut(\eta)} \to Res^{S_k}_{I_\eta}(A_\xi)$.

We then add marked legs labeled by $\{k+1\cdc n\}$ to the distinguished vertex of $\eta$.  Call the resulting marked graph $\gamma\in \Gamma(g,n,s)$.  Here $s= n-k+u$, hence $s\geq r$.  The expression inside parentheses in Equation $\ref{bdef2}$ is isomorphic to $Ind^{S_n}_{I_{\gamma}} (det(\gamma)_{Aut(\gamma)})$ (after Lemma $\ref{indlem}$).  Since permuting the $n-k$ marked legs of $\gamma$ results in an isomorphic marked graph, and since these are the only marked legs of $\gamma$ we know  $I_\gamma= I_\eta \times S_{n-k}$.  Thus $Ind^{S_n}_{I_{\gamma}} (det(\gamma)_{Aut(\gamma)}) = Ind^{S_n}_{I_{\eta}} (det(\eta)_{Aut(\eta)}) \circ V_{1^{n-k}}, $ since $S_{n-k}$ acts by the alternating representation on the marked legs.
\end{proof}

\begin{corollary}\label{dscor}  The consistent sequence formed by $B(g,n,n-\ell)_i$ is a direct sum of the consistent sequences $\widehat{A}_\xi$, with direct sum taken over isomorphism classes of core graphs $\xi$ of degree $i$ and of type $(g,k,u)$ with $k\leq n$ and $k-u \leq \ell$.
\end{corollary}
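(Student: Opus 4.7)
The plan is to deduce the corollary directly from Proposition \ref{directsumprop} by (i) specializing the parameter $r = n-\ell$ to see that the indexing data of Proposition \ref{directsumprop} becomes exactly that asserted in the corollary, and (ii) checking that the stabilization map $\psi$ is compatible with the core-graph decomposition, so the vector space isomorphisms assemble into an isomorphism of consistent sequences.

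First I would set $r = n-\ell$ in Proposition \ref{directsumprop}. The condition $k - u \leq n - r$ becomes $k - u \leq \ell$, which is independent of $n$, while the condition $k \leq n$ becomes a moving upper bound. For a fixed core graph $\xi$ of type $(g,k,u)$ with $k - u \leq \ell$, the summand $\widehat{A}_\xi(n)$ is defined (and nonzero) exactly when $n \geq k$, while by convention $\widehat{A}_\xi(n) = 0$ for $n < k$. Hence the vector-space decomposition in Proposition \ref{directsumprop} can be rewritten as a sum over all admissible core graphs $\xi$ of degree $i$ satisfying $k - u \leq \ell$, regardless of $n$, by interpreting the terms with $k > n$ as zero. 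This is the indexing set of the corollary.

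Next, I would verify compatibility with $\psi$. The stabilization map sends $\gamma \in \Gamma(g,n,s)$ to $\gamma \wedge l_{n+1} \in \Gamma(g,n+1,s+1)$, adjoining a marked leg at the distinguished vertex. Since a marked leg is discarded when passing to the core, one has $\epsilon(\gamma \wedge l_{n+1}) \cong \epsilon(\gamma)$ as unlabeled marked graphs. Therefore $\psi$ sends the summand of $B(g,n,n-\ell)_i$ indexed by $\xi$ into the summand of $B(g,n+1,n+1-\ell)_i$ indexed by the same $\xi$, so the decomposition respects the consistent sequence structure.

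Finally, I would check that under the identifications of Proposition \ref{directsumprop}, the restriction of $\psi$ to the $\xi$-summand agrees with the structure map of $\widehat{A}_\xi$ from Definition \ref{widecheck}. In the proof of the Proposition, the summand was identified with $\mathrm{Ind}^{S_n}_{I_\eta}(\det(\eta)_{\mathrm{Aut}(\eta)}) \circ V_{1^{n-k}}$, where the alternating action on the extra factor comes precisely from the antisymmetry of the marked legs in the wedge product $\det^{-1}(D_\gamma)$. Adding one more marked leg in the last position is therefore the standard inclusion $V_{1^{n-k}} \otimes \mathrm{sgn} \hookrightarrow V_{1^{n+1-k}}$ tensored with the identity on the core factor, which is exactly the structure map defining $\widehat{A}_\xi$ as a conjugate-induced sequence. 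The main potential obstacle is this last sign-bookkeeping verification, but the sign convention built into $\psi$ (adjoining $l_{n+1}$ in the rightmost position) is precisely the one used in Definition \ref{widecheck}, so the identification is natural.
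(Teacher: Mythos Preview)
Your proposal is correct and follows essentially the same approach as the paper's proof, which is a one-line appeal to Proposition~\ref{directsumprop} together with the observation that $\psi$ preserves the core of a marked graph. You have simply made explicit the details the paper leaves implicit, in particular the reindexing when $r=n-\ell$ and the identification of the restricted stabilization map with the structure map of $\widehat{A}_\xi$.
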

\begin{proof}  This follows from the Proposition along with the observation that the stabilization maps $\psi$ preserve the core of a marked graph.  
\end{proof}

Combining this corollary with Lemma $\ref{directsumlem}$ proves the first statement of Theorem $\ref{mainthm}$.  The remainder of the proof concerns the stability bound.

\subsection{A stability bound via cutting.}

For a core graph $\xi$, write $\rho_{\xi}: = \rho(A_\xi)$, i.e.\ the maximum number of rows appearing in an irreducible summand of the $S_n$-module $A_\xi$.  Invoking Lemma $\ref{rhos}$ proves:

\begin{lemma}\label{sumlem}  Let $\xi$ be a core graph of type $(g,n,r)$.  The conjugate of the consistent sequence $\widehat{A}_\xi$ is representation stable, stabilizing sharply at $n+\rho_\xi$.
\end{lemma}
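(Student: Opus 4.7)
The plan is to recognize this statement as an immediate instance of Lemma \ref{rhos}.

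First I would unwind the notation. Since $\xi$ is a core graph of type $(g,n,r)$, its legs form a labeled set of size $n$, so the direct sum
\[ A_\xi = \bigoplus_{\substack{ [\gamma]\in\text{Iso}\Gamma(g,n,r) \\ \epsilon(\gamma)\cong\xi}} \Sigma^{n} det(\gamma)_{Aut(\gamma)} \]
carries a natural $S_n$-action (by relabeling the legs of those $\gamma$ having core isomorphic to $\xi$) and is concentrated in the single degree $|E_\xi| + n - r$. Thus $A_\xi$ is an $S_n$-module to which Definition \ref{widecheck} applies, and the consistent sequence $\widehat{A}_\xi$ defined there is precisely the one appearing in Proposition \ref{directsumprop} and Corollary \ref{dscor}.

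Next I would match statistics: by definition $\rho_\xi := \rho(A_\xi)$, the maximum number of rows appearing in a Young diagram indexing an irreducible summand of $A_\xi$. Applying Lemma \ref{rhos} with $X = A_\xi$ (so that the index $m$ in that lemma is taken to be $n$) then yields that $\widehat{A}_\xi$ is conjugate stable, stabilizing sharply at $n + \rho(A_\xi) = n + \rho_\xi$, which is exactly the claim.

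There is essentially no obstacle in the lemma itself; the real work was already done, both in Lemma \ref{stablem} (where the Littlewood--Richardson rule pins down the sharp stability point of an induced consistent sequence) and in Proposition \ref{directsumprop} (where the modules $A_\xi$ were singled out so that the decomposition of $B(g,n,r)_i$ falls exactly into the framework of Definition \ref{widecheck}). The genuine challenge still ahead -- controlling $\rho_\xi$ as $\xi$ varies so that these core-by-core bounds combine to produce the global sharp stability point of Theorem \ref{mainthm} -- is a separate matter, to be handled by a genus induction leveraging the edge-cutting Lemma \ref{rowlem}.
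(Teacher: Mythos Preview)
Your proposal is correct and matches the paper's own argument exactly: the paper simply states that the lemma follows by invoking Lemma~\ref{rhos}, and you have spelled out precisely why that invocation applies. There is nothing to add.
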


We now arrive at the fundamental step of the proof, bounding this expression by induction on $g$.

\begin{proposition}\label{sharpprop} Let $\xi$ be a core graph of type $(g,n,r)$ and let $\widehat{A}_\xi$ be the associated core sequence.  
	Then 
	$$ n +\rho_\xi \leq \ds\lceil\frac{9(g-1)}{2} \ds\rceil + 3(n-r).  $$
\end{proposition}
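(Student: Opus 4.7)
The plan is to prove Proposition \ref{sharpprop} by induction on the genus $g$, with the inductive step driven by the edge-cutting Lemma \ref{rowlem}.

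For the base case $g=1$, a core graph $\xi$ has Betti number zero and is therefore a tree. The inequality reduces to $n + \rho_\xi \leq 3(n-r)$, which I would establish by a direct flag count: the tree identity $|E_\xi|=|V_\xi|-1$, the identity $|F_\xi|=2|E_\xi|+n$, and the stability condition $|a^{-1}(w)|\geq 3$ at every neutral vertex together pin down the combinatorial shape of $\xi$ tightly. Because the $I_\eta$-action on legs is determined by the action of graph automorphisms on leg-orbits, this combinatorial control translates directly to a bound on $\rho_\xi$ via Lemma \ref{rhos} and the Littlewood--Richardson rule.

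For the inductive step with $g\geq 2$, I would pick a labeling $\eta$ of $\xi$ and a non-disconnecting edge $e$ of $\eta$, and form $\eta_c$ by cutting $e$ and attaching the labels $n+1,n+2$ to the resulting flags. Lemma \ref{rowlem} then gives $\rho_\xi=\rho_\eta\leq\rho_{\eta_c}$. By the admissibility axiom for core graphs, $e$ carries at most one marked flag, so $\eta_c$ has type $(g-1,\,n+2,\,r-\ell_e)$ with at most $\ell_e\in\{0,1\}$ new marked legs. Taking the core $\xi_c:=\epsilon(\eta_c)$, which has type $(g-1,\,n+2-\ell_e,\,r-\ell_e)$, and comparing $\rho_{\eta_c}$ to $\rho_{\xi_c}$ via the same orbit analysis as in Proposition \ref{directsumprop}, the inductive hypothesis applied to $\xi_c$ yields
\[
\rho_{\xi_c}\;\leq\;\lceil 9(g-2)/2\rceil + 3\bigl((n+2-\ell_e)-(r-\ell_e)\bigr)-(n+2-\ell_e).
\]
Since $\lceil 9(g-1)/2\rceil-\lceil 9(g-2)/2\rceil\in\{4,5\}$, a direct numerical check closes the induction in the $\ell_e=0$ case for every $g$, and in the $\ell_e=1$ case cleanly for $g$ even.

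The main obstacle I anticipate is the residual case in which every non-disconnecting edge of $\xi$ is marked and $g$ is odd, which is precisely the situation realized by the extremal core graphs $\theta_{g,\ell}(p)$ that determine the sharp stability bound. For such graphs the naive comparison above leaves a slack of one, so a supplementary argument is needed: I would compute $\rho_\xi$ for this extremal family directly from the description of $\mathrm{Ind}_{I_\eta}^{S_n}(\det(\eta)_{\mathrm{Aut}(\eta)})$, using that the relevant groups are products of the form $S_p\times(S_2\wr S_y)$ and that the interior automorphisms act trivially on $\det(\eta)$ (the sign contributions from edges and marked flags cancel). The Littlewood--Richardson/Pieri rules applied to the resulting permutation representation will then exhibit $\rho_\xi\leq\lceil m/2\rceil$ with equality attainable, both closing the induction and dovetailing with the sharpness portion of Theorem \ref{mainthm}.
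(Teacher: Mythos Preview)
Your overall architecture---induction on $g$, with the base case handled by a direct tree argument and the inductive step driven by Lemma~\ref{rowlem}---matches the paper.  But two points in the inductive step are genuine gaps.

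First, the $\ell_e=1$ arithmetic does not close, for either parity of $g$.  When you pass from $\eta_c$ to its core $\xi_c$ via the orbit analysis of Proposition~\ref{directsumprop}, the $S_{n+2}$-orbit of $\eta_c$ is exactly $A_{\xi_c}\circ V_{1}$, and inducing with $V_1$ always adds a row: $\rho_{\eta_c}=\rho_{\xi_c}+1$, not $\rho_{\eta_c}\leq\rho_{\xi_c}$.  With this correction the inductive hypothesis yields $\rho_\xi+n\leq\lceil 9(g-2)/2\rceil+3(n-r)+6$, and since $\lceil 9(g-1)/2\rceil-\lceil 9(g-2)/2\rceil\in\{4,5\}$ you overshoot by at least one regardless of parity.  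So the residual case is \emph{all} core graphs whose every non-disconnecting edge is marked, not just those with $g$ odd.

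Second, that residual case is far larger than the family $\theta_{g,\ell}(p)$.  The $\theta$'s are precisely the \emph{saturated} core graphs (those with $n=m$, Lemma~\ref{satlemma}); a generic core graph with no unmarked non-disconnecting edge can have disconnecting edges, non-trivalent neutral vertices, or $n<m$, and your proposed direct computation for the $\theta$'s says nothing about these.

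The paper avoids both problems by a finer case split.  It only ever cuts an \emph{unmarked} non-disconnecting edge (a ``good'' edge), so that $\xi_c$ is already a core graph and the clean $\ell_e=0$ arithmetic applies.  When no good edge exists, it does not cut at all: if there is a (necessarily marked) tadpole, \emph{deleting} it gives a core graph of type $(g-1,n,r-1)$, and the induction closes with room to spare.  If there is neither a good edge nor a tadpole, then every circuit in $\xi$ has exactly two edges, both marked and both adjacent to the distinguished vertex; this structural constraint is strong enough to bound $\rho_\xi$ directly (partitioning the neutral vertices adjacent to the DV by how many parallel edges connect them) and obtain $\rho_\xi+n\leq 4(g-1)+3(n-r)+1\leq\lceil 9(g-1)/2\rceil+3(n-r)$ without any further induction.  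This last direct argument is the piece your proposal is missing.
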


\begin{proof} 
{\bf Induction on Genus -- Base Case.}  To begin, let $\xi$ be a core graph of type $(1,n,r)$.  This means in particular that the graph underlying $\xi$ is a tree, having a distinguished vertex.  If $r=0$ the statement is trivial, so we may assume that $r>0$.  This in turn implies that $\xi$ has at least one edge, hence at least two vertices.

Define a partial order on the set of vertices of $\xi$ by declaring that $v>v^\prime$ iff the unique shortest path connecting $v$ to the DV contains $v^\prime$.  A vertex is maximal with respect to this partial order only if it is adjacent to a unique edge.  Therefore, by stability considerations, every maximal vertex is adjacent to at least two legs.

Choose an order on the maximal vertices $v_1\cdc v_t$.  Note that $r\leq t$, since every marking lies on an edge, and the graph has no cycles.  Since each maximal vertex must be adjacent to at least two legs, we may choose a leg labeling of $\xi$ so that the maximal vertex $v_i$ is adjacent to the legs labeled by $2i$ and $2i-1$.  Denote the resulting marked graph by $\gamma$.  By construction, the group $I_\gamma\subset S_n$ contains the transpositions $(2i-1 \ 2i)$ for $1\leq i \leq t$ and so contains the group $(S_2)^r$.

By definition $\rho_\xi$ is the maximum number of rows appearing in an irreducible summand of $A_\xi\cong Ind_{I_\gamma}^{S_n}( det(\gamma)_{Aut(\gamma)})$.  Since $(S_2)^r\subset I_\gamma$, this number is not more than the maximum number of rows appearing in an irreducible summand of $Ind_{(S_2)^r}^{S_n} Res^{I_\gamma}_{(S_2)^r} (det(\gamma)_{Aut(\gamma)})$.  Since the $S_2$ action on the given pair of legs at each maximal vertex is trivial, the Littlewood-Richardson rule tells us that the number of rows of the latter term is at most $n-r$.  Thus $\rho_\xi \leq n-r$.

Finally, since $\xi$ is a core graph, it must satisfy 
$n\leq 3(g-1) + 2(n-r)$ by Lemma $\ref{satlemma}$.  Here $g-1=0$, so we conclude $n+\rho_\xi \leq 2(n-r) + (n-r) = 3(n-r)$ as desired.

{\bf Induction Step.}  We now assert the induction hypothesis that the stated bound holds for all core graphs of genus less than a given $g\geq 2$.  Given a core graph $\xi$ of type $(g,n,r)$ with $\beta:= g-1 \geq 1$, we say an edge of $e$ is ``good'' if $e$ contains no marked flag and its removal would not disconnect the graph. To establish the bound we divide the analysis into three cases.

{\bf Case 1:}  $\xi$ contains a good edge.  Call it $e$.   Define $\xi_c$ to be graph formed by cutting $\xi$ at the edge $e$, as in  Subsection $\ref{ecsec}$. 
Since $e$ is good it contains no marked flags, hence $\xi_c$ contains no marked legs, and hence is a core graph.  Thus the induction hypothesis applies to it, and we know that 
$$ \rho_{\xi_c} + (n+2) \leq 4.5(g-2) + 3(n+2-r)+.5.$$
By Lemma $\ref{rowlem}$ we know $\rho_\xi \leq \rho_{\xi_c}$, and thus
$$
\rho_\xi+n \leq 4.5(g-2) +3(n+2-r) + .5 -2 = 4.5(g-1)  +3(n-r) 
$$
as desired.

{\bf Case 2:} $\xi$ contains no good edge and contains a tadpole.  Since the tadpole is not a good edge it must be marked and marked only once by our conventions.  Call the tadpole edge $e$.

Let $\xi\setminus e$ be the graph formed by deleting $e$.  The result is still stable because there is no stability condition at the DV.  So $\xi\setminus e$ is an unlabeled marked graph of type $(g-1,n,r-1)$.  Also, $\xi\setminus e$ has no marked legs, hence is a core graph, so applying the induction hypothesis we have
$\rho_{\xi\setminus e} + n \leq 4.5(g-2) + 3(n-r+1)+.5 < 4.5(g-1)+3(n-r)$.  It thus suffices to argue that  $\rho_{\xi}\leq \rho_{\xi\setminus e}$.  They are in fact equal.

For this, label the legs of $\xi$ in any fashion, again calling the result $\gamma$.  This also gives us a leg labeling of $\xi\setminus e$, which we denote by $\gamma\setminus e$.   
View $Aut(\gamma\setminus e) \subset Aut(\gamma)$ as the subgroup of automorphisms which fix $e$.  Any automorphism of $\gamma$ not appearing in this subgroup can be written in the form $\gamma = \gamma_1\circ \gamma_2$ where $\gamma_2\in Aut(\gamma\setminus e)$ and $\gamma_2$ permutes the marked tadpoles of $\gamma$, but fixes all other edges.  Since marked tadpoles have degree $1-1=0$, permutation of marked tadpoles acts trivially on $det(\gamma)$, from which we conclude $$det(\gamma)_{Aut(\gamma)}\cong det(\gamma)_{Aut(\gamma\setminus e)} \cong det(\gamma\setminus e)_{Aut(\gamma\setminus e)}$$ as representations of the group $I_{\gamma\setminus e}$.  It remains to observe that $I_{\gamma\setminus e}=I_\gamma$.

By definition $I_{\gamma\setminus e} \subset I_\gamma$ is the subgroup of permutations in $S_n$ for which there exists a (leg preserving) isomorphism $\sigma\gamma \to \gamma$ which fixes the edge $e$.  It remains to prove $I_{\gamma} \subset I_{\gamma\setminus e}$.  Given $\sigma \in I_\gamma$ we may pick an isomorphism $\psi\colon \sigma \gamma \to \gamma$.  This $\psi$ necessarily permutes the marked tadpoles of $\gamma$ and we may write $\psi = \psi_1\circ \psi_2$ where $\psi_2\colon \sigma \gamma \to \gamma$ is an isomorphism which fixes all marked tadpoles and $\psi_1\colon \gamma\to \gamma$ is a leg fixing automorphism which fixes all edges except the marked tadpoles.  In particular $\psi_2$ fixes $e$, hence realizes $\sigma \in I_{\gamma\setminus e}$ as desired.



{\bf Case 3:} $\xi$ contains no good edge and no tadpole.  
Since $\xi$ has no good edge, it has no circuit containing 3 or more distinct edges. 
Since $\xi$ has no tadpole, it has no circuit containing only one edge.  Thus every circuit in $\xi$ has exactly two edges and two vertices.   And since every edge belonging to such a circuit must be marked, lest there be a good edge, one of these two vertices must be the DV.  Denote the distinguished vertex by $v_0$, and let $v_1\cdc v_k$ be the set of neutral vertices which are adjacent to the distinguished vertex.  For $1\leq i \leq k$, define $e(v_i)\geq 1$ to be the number of edges connecting $v_i$ to $v_0$.

For $1\leq i \leq k$, define $L_i$ to be the subset of the legs of $\xi$ having the property that the unique shortest path connecting said leg to $v_0$ contains $v_i$.  Each leg not adjacent to $v_0$ is contained in a unique $L_i$, although some $L_i$ may be empty.  Define $b_0$ to be the number of indices $1\leq i \leq k$ for which $|L_i|=0$, define $b_1$ to be the number of indices $1\leq i \leq k$ for which $|L_i|=1$ and  define $b_2$ to be the number of indices $1\leq i \leq k$ for which $|L_i|\geq 2$.  In particular, $k=b_0+b_1+b_2$.  Notice that if $|L_i|=0$ then $e(v_i) \geq 3$ and if $|L_i|=1$ then $e(v_i) \geq 2$.  We divide the rest of proof into subcases when $b_0$ is or is not equal to $0$.


{\bf Subcase: $b_0\neq 0$.}  Choose a vertex $v_j$ such that $L_j = \emptyset$.  Since every edge of $\xi$ contained in a circuit is adjacent to $v_0$, the $L_j=\emptyset$ condition implies that the only edges adjacent to $v_j$ are those connecting $v_j$ to $v_0$.  Form a new core graph $\xi\setminus v_j$ by removing the vertex $v_j$ and each of the $e:=e(v_j)$ edges adjacent to it.  The core graph $\xi\setminus v_j$ is of type $(g - e +1, n, r-e)$.  By the induction hypothesis $\rho_{\xi\setminus v_j}+n \leq \lceil 4.5(g-e) \rceil + 3(n-r+e)$ which, using $e\geq 3$, is easily seen to imply 
$\rho_{\xi\setminus v_j}+n\leq \lceil 4.5(g-1) \rceil + 3(n-r)$.  

Thus to complete this subcase it suffices to show $\rho_{\xi\setminus v_j}=\rho_\xi$.  This follows similarly to case 2 above, where the role of a marked tadpole is replaced by the set of $e$ marked parallel edges connecting $v_0$ and $v_j$.    Choose a leg labeling of $\xi$, call it $\gamma$.  This in turn induces a leg labeling of $\xi\setminus v_j$, call it $\gamma\setminus v_j$.    Any isomorphism $\psi\colon \sigma \gamma \to \gamma$ must permute the set of vertices $v_i$ satisfying $|L_{i}| = 0$ and $e(v_i)=e$, and can be factored into an isomorphism $\psi_2$ which fixes each edge running between $v_0$ and such a $v_i$, followed by an automorphism $\psi_1$ which permutes only those vertices and the edges adjacent to them.   This allows us to establish $I_\gamma = I_{\gamma \setminus v_j}$ (as in Case 2 above).  Also, since all edges running from $v_0$ to such a $v_i$ are marked,  $\psi_1$ acts trivially on $det(\gamma)$ from which we may establish $det(\gamma)_{Aut(\gamma)} \cong det(\gamma\setminus v_j)_{Aut(\gamma\setminus v_j)}$.


{\bf Subcase: $b_0 = 0$.}  Define $h$ to be the number of indices $1\leq i \leq k$ for which $|L_i|=1$ and $e(v_i) > 2$.  Define $t = b_1-h$.  We proceed by induction on $h$.

The base case is $h=0$, hence $k = b_2+b_1+b_0 = b_2+t$ in this case.  Notice on the one hand that $\sum_{i=1}^k e(v_i) \geq r$, and on the other hand $\sum_{i=1}^k (e(v_i)-1) \leq\beta$ (where $\beta:=g-1$).  We thus conclude $r \leq k+\beta$, hence  
\begin{equation}\label{blocks}
	r\leq t+b_2 + \beta.
\end{equation}

To find an upper bound for $\rho_\xi$, consider the restriction of  $A_\xi$ to an $S_{n-t}\times S_t$-module, where $S_t$ permutes the $t$ legs associated to those blocks of size 1 corresponding to exactly two parallel edges.  By assumption $\xi$ has an automorphism which permutes the $t$ pairs of parallel edges which must, by stability considerations, be adjacent to the unique leg in said block.   Since all such edges are marked, this automorphism acts by the identity, $V_t$.

On the other hand, each of the blocks of size at least 2 (of which there are $b_2$) must have two legs adjacent to a common vertex.  In particular, removing the DV and its adjacent legs results in a (non-rooted) forest, and so this follows simply by stability considerations.  So repeating the argument from the base case (above), the maximum number of rows in the $S_{n-t}$ module corresponding to permuting all but the $t$ isolated legs has at most $n-t-b_2$ rows.

Applying the Littlewood-Richardson rule to the induced representation from $S_{n-t}\times S_t$ to $S_n$ we see that $A_\xi$ has at most $n-t-b_2+1$ rows, i.e.\ $\rho_\xi \leq n-t-b_2+1$.  We then proceed to bound $\rho_\xi+n$. Since $\xi$ is a core graph, Lemma $\ref{satlemma}$ ensures $ n \leq 3\beta + 2(n-r)$, hence:
$$\rho_\xi+n \leq (n-b_2-t+1) + (3\beta+2(n-r)) 
= 3\beta +3(n-r) +r-b_2-t +1 \leq 4\beta +3(n-r)+1$$
where the last inequality holds due to Equation $\ref{blocks}$.  To conclude the base case, we observe that $4\beta+1 \leq 4\beta + \lceil .5\beta \rceil = \lceil 4.5\beta \rceil$ since $\beta>0$.

Having proven the base case, in which $h=0$, we proceed with the induction step.  If $h>0$ then we may choose a neutral vertex $v_j$ with  $e(v_j) \geq  3$ and $|L_j|=1$.   Define $e:=e(v_j)$. Removing the vertex $v_j$, its $e$ adjacent edges and its one adjacent leg forms a new core graph, call it $\zeta$, to which the induction hypothesis applies. Since each of the removed edges is marked, the core graph $\zeta$ is of type $(g-e+1,n-1,r-e)$.  In addition, since $\zeta$ was created by removing one leg from $\xi$, we must have $\rho_{\xi} \leq \rho_{\zeta}+1$.  Thus the induction hypothesis implies
$$\rho_{\xi}+n\leq \rho_{\zeta} + (n-1)+2 \leq \lceil 4.5(g-e) \rceil  + 3(n-1-r+e)+2 $$
Using the fact that $e\geq 3$, we can easily deduce
$\rho_{\xi} +n \leq 4.5(g-1) +3(n-r)+.5$
which implies the claimed upper bound.
\end{proof}

\begin{corollary}\label{boundcor} Fix $g$ and $\ell$ with $m:= 3(g-1) + 2\ell \geq 0$.  The representation stable sequence of chain complexes $B(g,n,n-\ell)^\top$ stabilizes at or before $\lceil \frac{3m}{2} \rceil$.
\end{corollary}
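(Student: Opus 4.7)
The plan is to assemble previously established results; this corollary is essentially a bookkeeping consequence of Proposition~\ref{sharpprop}, Corollary~\ref{dscor}, Lemma~\ref{sumlem}, and Lemma~\ref{directsumlem}. The heavy combinatorial work has already been carried out in Proposition~\ref{sharpprop} via the induction on genus, so I do not anticipate a substantive new obstacle here; the work is in arranging the pieces correctly.

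First I would work one chain degree at a time. Fix a chain degree $i$. By Corollary~\ref{dscor}, the consistent sequence $\{B(g,n,n-\ell)_i\}_n$ decomposes as
\[
\bigoplus_{[\xi]} \widehat{A}_\xi,
\]
where $\xi$ ranges over isomorphism classes of core graphs of degree $i$ and type $(g,k,u)$ with $k \leq n$ and $k-u \leq \ell$. Since tensoring with the (one-dimensional) sign representation commutes with direct sums, applying $(-)^\top$ yields a corresponding decomposition of $(B(g,n,n-\ell)_i)^\top$ as $\bigoplus_{[\xi]} (\widehat{A}_\xi)^\top$.

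Next I would apply Lemma~\ref{sumlem}: each conjugate $(\widehat{A}_\xi)^\top$ is representation stable and stabilizes sharply at $k+\rho_\xi$. Combining Proposition~\ref{sharpprop} (applied to $\xi$ of type $(g,k,u)$) with the indexing constraint $k - u \leq \ell$, this sharp bound satisfies
\[
k + \rho_\xi \;\leq\; \left\lceil \frac{9(g-1)}{2} \right\rceil + 3(k-u) \;\leq\; \left\lceil \frac{9(g-1)}{2} \right\rceil + 3\ell \;=\; \left\lceil \frac{3m}{2} \right\rceil.
\]
Lemma~\ref{directsumlem} then says the direct sum is representation stable and stabilizes at or before the maximum of its constituent sharp bounds, hence at or before $\lceil 3m/2 \rceil$.

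Finally, since $B(g,n,n-\ell)$ is supported in the finitely many chain degrees $0$ through $m$, the degreewise stabilization bound is a uniform bound for the whole chain complex, giving the stated conclusion. The only minor points requiring a sentence of justification are (i) that $(-)^\top$ distributes over the direct sum, which is immediate from its definition, and (ii) that the constraint $k \leq n$ in the decomposition is not an obstruction, since any core graph contributing to the sum has $k \leq \lceil 3m/2 \rceil$ and is therefore already ``in play'' whenever $n$ is at least the claimed stability point.
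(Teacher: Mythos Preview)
Your proposal is correct and follows essentially the same route as the paper: decompose each degree via Corollary~\ref{dscor}, bound each core summand's conjugate stability point using Proposition~\ref{sharpprop} together with $k-u\leq \ell$, and then invoke Lemma~\ref{directsumlem}. Your additional remarks (i) and (ii) are harmless clarifications the paper leaves implicit.
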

\begin{proof}  For each degree $0\leq i \leq m$, the consistent sequence $B(g,n,n-\ell)_i$ can be written as a direct sum of consistent sequences $\widehat{A}_\xi$ by Corollary $\ref{dscor}$.  These sequences are indexed by core graphs $(g,k,u)$ with $k\leq n$ and $k-u\leq \ell$.  Thus, by Proposition $\ref{sharpprop}$, the conjugate of each $\widehat{A}_\xi$ stabilizes at or before $\lceil \frac{9(g-1)}{2} \rceil +3(k-u)\leq \lceil \frac{3m}{2} \rceil$.  Applying Lemma $\ref{directsumlem}$ to this direct sum decomposition establishes the result.	 \end{proof}

\subsection{Sharpness of the Bound}\label{fpss}

It remains to prove that the bound established in Corollary $\ref{boundcor}$ is sharp.  Continue with $g$ and $\ell$ satisfying $m:= 3(g-1) + 2 \ell \geq 0$.    By Lemma $\ref{directsumlem}$ it is sufficient to show that there exists a core graph, call it $\theta$, of type $(g,m,m-\ell)$ and satisfying $\rho_\theta + m = \lceil \frac{3m}{2} \rceil$.  We proceed in two cases depending on the (necessarily opposite) parities of $g$ and $m$.

{\bf Case: $g$ is odd and $m$ is even}. Consider the core graph $\theta_{g,\ell}:=\theta_{g,\ell}(0)$ of Definition $\ref{theta}$.  This is a core graph of type $(g,m,m-\ell)$, and it remains to calculate $\rho_{\theta_{g,\ell}}$.  The $S_m$ representation $A_{\theta_{g,\ell}}$ is the induced representation of a 1-dimensional representation of the hyper-octahedral group $S_2\wr S_{m/2}$.  This one dimensional representation is characterized by transpositions acting by +1 and permutation of pairs also acting by +1, so it the trivial one dimensional representation.  The irreducibles appearing in this induced representation are all the doubles, i.e.\ partitions of the form $2\lambda$ where for a partition $\lambda$ of $m/2$, and each double appears with multiplicity $1$ \cite[Proposition 2.3'(ii)]{KT}.  The irreducible decomposition with the most rows appearing in this induced representation is the double of the alternating (so $V_{2^{m/2}}$), which has $m/2$ rows. We thus conclude that the conjugate of $\widehat{A}_{\theta_{g,\ell}}$ stabilizes at $m+m/2$ as desired.  

{\bf Case: $g$ is even and $m$ is odd}. Form a core graph $\theta_{g,\ell}:= \theta_{g,\ell}(1)$ as in Definition $\ref{theta}$.  In this case, $\theta_{g,\ell}$ has a unique unmarked leg adjacent to a pair of marked parallel edges, so the $S_m$ representation $A_{\theta_{g,\ell}}$ is the induced representation of the $S_{m-1}$ representation  $A_{\theta_{g-1,\ell+1}}$ of the previous case. Thus the maximum number of rows appearing in an irreducible is one more than the maximum number of rows appearing in the former, which we saw in the previous case was $3(\beta-1)/2+\ell+1$.  We thus find the stability bound
$
m+\rho = 3\beta + 2\ell + 3(\beta-1)/2 + \ell+1+1 = 4.5\beta + 3\ell +.5
$
for the conjugate of the core sequence $\widehat{A}_{\theta_{g,\ell}}$ as desired.

\begin{remark}  The above proof shows that among the graphs $\theta_{g,\ell}(p)$ constructed in Definition $\ref{theta}$, the cases $p=0,1$ witness the sharpness of the stability bound.  We remark that for $g\geq 3$ and odd, it is also possible to witness this bound by $\theta_{g,\ell}(2)$.  A computation with the Littlewood-Richardson rule, however, shows that these are the only such core graphs whose associated sequences witness the bound. 
\end{remark}

\section{Homology classes via sharp stability.}\label{stabsec}
We now turn to the implication of our main theorem on homology and the proof of Theorem $\ref{odd2}$.  As Theorem $\ref{3h}$ above indicates, knowing precisely where a representation stable sequence of chain complexes stabilizes can be used to glean some information about the multiplicities in homology.  In our example of interest we can further exploit the fact that the highest degree is the last to stabilize to say more.
  
Continue with $g,\ell$ fixed such that $m:=3(g-1) + 2\ell \geq 0$.  From the work we've already done, the first two statements of Theorem $\ref{odd2}$ are rather straight-forward.  The first is:
\begin{lemma}  Let $\lambda = (\lambda_1\cdc\lambda_k)$ be a partition of $\lceil 3m/2\rceil$.  If $k <\lceil m /2 \rceil$ then the multiplicity of $(\lambda,1^{n-\lceil 3m/2 \rceil })$ in $H_i(B(g,n,n-\ell))$ is $0$ for all $i$.
\end{lemma}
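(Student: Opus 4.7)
The plan is to prove the stronger statement that $V_{(\lambda,1^{n-N})}$ already vanishes in the chain complex $B(g,n,n-\ell)_i$ itself, from which the assertion for $H_i$ is immediate. By Proposition~\ref{directsumprop} the chains decompose as a direct sum of $\widehat{A}_\xi(n)$ over core graphs $\xi$ of type $(g,k_\xi,u_\xi)$ with $k_\xi-u_\xi\leq \ell$, so I would show that $V_{(\lambda,1^{n-N})}$ does not appear in any single summand. Since $\widehat{A}_\xi(n)=A_\xi\circ V_{1^{n-k_\xi}}$, the Pieri rule for vertical strips tells us that $V_\kappa$ appears there only when $\kappa/\mu$ is a vertical strip of size $n-k_\xi$ for some $V_\mu\subset A_\xi$; in particular $|\mu|=k_\xi$.

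Specializing to $\kappa=(\lambda,1^{n-N})$, the vertical strip condition forces $\mu_i\in\{\lambda_i,\lambda_i-1\}$ for $i\leq k$ and $\mu_i\in\{0,1\}$ for $k<i\leq k+n-N$, with $\mu_i=0$ otherwise. Writing $a$ for the number of indices $i\leq k$ at which $\mu_i=\lambda_i-1$ and $b$ for the number of indices $i\in(k,k+n-N]$ at which $\mu_i=1$, the size identity $(N-a)+b=k_\xi$ rearranges to $a=b+(N-k_\xi)$. The bounds $b\geq 0$ and $a\leq k$ then imply $k_\xi\geq N-k$.

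To close the argument I would invoke Lemma~\ref{satlemma}, which gives $k_\xi\leq 3(g-1)+2(k_\xi-u_\xi)\leq m$, together with the direct computation $N-\lceil m/2\rceil=m$ (valid in both parities of $m$, since $N=\lceil 3m/2\rceil$). The standing assumption $k<\lceil m/2\rceil$ then gives $N-k > m\geq k_\xi$, contradicting $k_\xi\geq N-k$. Hence no core graph contributes a copy of $V_{(\lambda,1^{n-N})}$, and its multiplicity is already zero on chains.

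I do not expect a real obstacle here: once the core graph decomposition is in hand, the argument is a direct bookkeeping with the Pieri rule and the a priori bound $k_\xi\leq m$ on core graphs. The one identity to keep track of is $N-\lceil m/2\rceil=m$, which is the clean numerical shadow of the sharpness calculations already carried out in Proposition~\ref{sharpprop} and Section~\ref{fpss}.
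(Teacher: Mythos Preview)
Your proof is correct and follows essentially the same route as the paper: decompose into core summands $\widehat{A}_\xi(n)=A_\xi\circ V_{1^{n-k_\xi}}$, use the Pieri rule to see that any irreducible appearing there has at least $n-k_\xi$ rows, and then invoke the bound $k_\xi\le m$ from Lemma~\ref{satlemma} together with the identity $\lceil 3m/2\rceil-\lceil m/2\rceil=m$. The only difference is packaging: the paper argues at the single value $n=\lceil 3m/2\rceil$ (phrasing the row bound via Corollary~\ref{legscor}) and then appeals to representation stability to propagate to all $n$, whereas you carry out the Pieri bookkeeping directly at arbitrary $n$ and thereby avoid invoking Theorem~\ref{mainthm}.
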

\begin{proof}
By Corollary $\ref{legscor}$ we know that, every marked graph of type $(g,\lceil 3m/2 \rceil,\lceil 3m/2 \rceil-\ell)$ has at least $\lceil m/2 \rceil$ marked legs.  Let $\gamma$ be such a marked graph having $h\geq \lceil m/2 \rceil$  marked legs and let $\xi=\epsilon(\gamma)$ be its underlying core graph.  

Then $\widehat{A}_\xi(\lceil 3m/2 \rceil) = A_\xi \circ V_{1^{h}}$, hence every irreducible supported on $A_\xi(\lceil 3m/2 \rceil)$ has at least $h$, and hence at least $\lceil m / 2 \rceil$, rows.  In particular there are no copies of $\lambda$ appearing in $B(g,\lceil 3m/2 \rceil ,\lceil 3m/2 \rceil -\ell)$.  Since $B(g,n,n-\ell)\tensor V_{1^n}$ is representation stable sharply at $\lceil 3m/2 \rceil$, it follows that the multiplicity of $(\lambda,1^{n-\lceil 3m/2 \rceil })$ is $0$ on $B(g,n,n-\ell)$, and hence on homology as well.
\end{proof}

The second statement of Theorem $\ref{odd2}$ is:
\begin{lemma}  Let $\lambda = (\lambda_1\cdc\lambda_k)$ be a partition of $\lceil 3m/2\rceil$ with $k=\lceil m /2 \rceil$.  Let $i<m$.  Then the multiplicity of $(\lambda,1^{n-\lceil 3m/2 \rceil })$ in $H_i(B(g,n,n-\ell))$ is $0$.
\end{lemma}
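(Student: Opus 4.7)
The plan is to sharpen the strategy of the preceding lemma by refining Corollary~$\ref{legscor}$: rather than bounding the number of marked legs of a graph by its overall type, I will bound it in terms of the chain degree to which it contributes. As before, it suffices to establish vanishing on the chain complex at $n = N := \lceil 3m/2\rceil$, since sharp representation stability (in its conjugate form) then transports the conclusion to $B(g, n, n-\ell)_i$ for all $n \geq N$ and hence to $H_i$.

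The first step is a flag count. Let $\gamma$ be a marked graph of type $(g, N, s)$ whose summand lies in degree $i = |E_\gamma| + N - s$, and let $h_\gamma$ denote its number of marked legs. The remaining $s - h_\gamma$ markings lie on edge flags at the distinguished vertex, and axiom (3) of admissibility forbids two marked flags from sharing an edge; hence $|E_\gamma| \geq s - h_\gamma$, and the degree formula yields $h_\gamma \geq N - i$.

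The second step is a branching argument. The subgroup $S_{h_\gamma} \subset I_\gamma$ permuting the labels of the $h_\gamma$ marked legs (such permutations lift to automorphisms of the underlying marked graph since all marked legs are at the distinguished vertex) acts on the one-dimensional space $det(\gamma)_{Aut(\gamma)}$ by the sign character, as the action is trivial on $det(E_\gamma)$ and alternating on $det^{-1}(D_\gamma)$. Frobenius reciprocity applied to the induction $Ind_{I_\gamma}^{S_N}$, combined with the branching observation that $V_\mu|_{S_{h_\gamma}}$ contains the sign representation $V_{1^{h_\gamma}}$ if and only if $\mu$ has at least $h_\gamma$ rows, then implies that every irreducible $V_\mu$ appearing in the $[\gamma]$-summand of $B(g, N, N-\ell)_i$ satisfies $\ell(\mu) \geq h_\gamma$.

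Combining the two steps, every irreducible summand of $B(g, N, N-\ell)_i$ has at least $N - i$ rows. For $i < m$ this is at least $N - m + 1 = \lceil m/2 \rceil + 1$, strictly more than the number of rows of $\lambda$, so $V_\lambda$ does not appear in $B(g, N, N-\ell)_i$. Theorem~$\ref{mainthm}$ then forces the multiplicity of $V_{(\lambda, 1^{n-N})}$ in $B(g, n, n-\ell)_i$ to vanish for all $n \geq N$, and hence also in $H_i$. No step appears to pose a serious obstacle: the flag-counting step is in the spirit of Corollary~$\ref{legscor}$ and the branching step is a standard application of Frobenius reciprocity; the main point is simply recognizing that keeping track of the chain degree refines the marked-leg count just enough to kill all partitions with exactly $\lceil m/2\rceil$ rows when $i<m$.
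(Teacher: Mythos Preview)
Your proof is correct. Both your argument and the paper's reduce to showing that $V_\lambda$ does not appear in $B(g,N,N-\ell)_i$ for $N=\lceil 3m/2\rceil$ and $i<m$, by proving that every irreducible occurring there has more than $\lceil m/2\rceil$ rows. The paper reaches this via the core-graph decomposition: it places each summand inside some $\widehat{A}_\xi(N)=A_\xi\circ V_{1^{N-q}}$ for a core graph $\xi$ of type $(g,q,u)$, invokes Lemma~\ref{satlemma} to get $q\leq m$, and then observes that equality $q=m$ forces $\xi\cong\theta_{g,\ell}(p)$, which lives in degree $m$, contradicting $i<m$. You instead work directly on an arbitrary marked graph $\gamma$ in degree $i$: the inequality $|E_\gamma|\geq s-h_\gamma$ combined with $i=|E_\gamma|+N-s$ gives $h_\gamma\geq N-i$, and the Frobenius reciprocity step with $S_{h_\gamma}\subset I_\gamma$ forces at least $h_\gamma$ rows. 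Your route is more elementary in that it bypasses the core-graph machinery and Lemma~\ref{satlemma} entirely; in effect you are proving the sharper bound $q\leq i$ (in the paper's language) rather than $q\leq m$ followed by a separate degree analysis. The paper's route, on the other hand, fits naturally into the framework already built for the main theorem and makes the role of the extremal graphs $\theta_{g,\ell}(p)$ explicit.
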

\begin{proof}  It is enough to prove that the multiplicity of $\lambda$ in   $B(g,\lceil 3m/2 \rceil,\lceil 3m/2\rceil-\ell)_i$ is $0$.  
Suppose otherwise.  Then $\lambda$ would be supported on some summand $\widehat{A}_\xi(\lceil 3m/2\rceil)$ corresponding to a core graph $\xi$ of type $(g,q,r)$ for some $q \leq m$.  If $q<m$ then $\lambda$ being supported on $\widehat{A}_\xi(\lceil 3m/2 \rceil) = A_\xi\circ V_{1^{\lceil 3m/2 \rceil - q}}$ would imply that $\lambda$ has at least $\lceil 3m/2\rceil -q> \lceil m/2 \rceil$ rows, a contradiction.  So it must be the case that $q=m$, but this in turn implies that $\xi$ is of the form $\theta_{g,\ell}(p)$ (by Lemma $\ref{satlemma}$) and hence of degree $m$, contradicting our assumption on $i$. 
\end{proof}

We now turn to the final statement of Theorem $\ref{odd2}$.  We first recall that our notation and conventions regarding partitions and sums of partitions were fixed in Section $\ref{repstabsec}$.  In particular, we regard partitions as finite, non-increasing sequences of positive integers.  In this section we will also have occasion to add partitions to an arbitrary finite sequence of integers by the same convention.  That is, we always pad by zeros as needed and add the sequences point-wise.  If the result is a non-increasing sequence of non-negative integers which ends in zeros, we refer to it as a partition, tacitly disregarding these zeros without further ado.  For example we may regard $(4,3,1)+(0,0,1,0)$ as the partition $(4,3,2)$.

\begin{definition}\label{lambdadef2} Let $(y,p)$ be a pair of non-negative integers such that $2y+p=m$. 
We define $\Lambda(y,p)$ to be the set of partitions of $\lceil 3m/2 \rceil$, counted with multiplicity, of the form $1^{\lceil m/2 \rceil}+\eta+\pi$ where:
	\begin{itemize}
		
		\item $\eta$ is a partition of $2y$, all of whose entries (block sizes) are even.
		
		\item $\pi$ is a sequence of $y+1$ non-negative integers which sum to $p$.  

		\item The pair $\eta$ and $\pi$ satisfy the Pieri rule: $\pi_{i+1}+\eta_{i+1} \leq \eta_i$.
	\end{itemize}
The last condition implies that $\eta+\pi$ forms a partition of $m$ (after disregarding possible padded zeros).    	We allow the case $y=0$, hence the empty partition of $0$, which would in turn force $\pi= (p,0,...,0)$.  We also allow $p=0$ in which case adding $\pi$ has no effect and the Pieri condition is vacuous.  Finally we observe that each partition of the form $\eta+\pi$ has at most $\lceil m/2 \rceil$ blocks, hence all partitions in $\Lambda(y,p)$ have exactly $\lceil m/2 \rceil$ blocks.  
\end{definition}

For $n \geq \lceil 3m/2\rceil$ we define the $S_n$ representation
$$X(y,p)(n):= \ds\bigoplus_{\lambda\in \Lambda(y,p)} V_{\lambda,1^{n-\lceil 3m/2 \rceil}}. $$
Finally, define the $S_n$-representation 
$$\mathsf{Stab}(g,n,n-\ell) = \ds\bigoplus_{\substack{m=2y+p \\ p< g}} X(y,p)(n).$$

\begin{theorem}\label{stab}  Let $\lambda=(\lambda_1\cdc \lambda_k)$ be a partition of $\lceil 3m/2 \rceil$ such that $k=\lceil m/2 \rceil$.  For each $n\geq \lceil 3m/2 \rceil$ the partition $(\lambda, 1^{n-\lceil 3m/2 \rceil})$ has the same multiplicity in $\mathsf{Stab}(g,n,n-\ell)$ as it has in $H_m(B(g,n,n-\ell))$.
\end{theorem}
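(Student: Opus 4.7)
The plan is to reduce the homology computation to a multiplicity computation on the top-degree chain space $B_m := B(g,n,n-\ell)_m$, identify which core-graph summands can contribute, and then apply the Pieri rule twice to match $\mathsf{Stab}(g,n,n-\ell)$. Since $m$ is the top non-zero degree we have $H_m = \ker(d_m)$, so equivariance of $d_m$ yields a short exact sequence of $S_n$-modules $0 \to H_m \to B_m \to d_m(B_m) \to 0$ with $d_m(B_m) \subseteq B(g,n,n-\ell)_{m-1}$. The second statement of Theorem $\ref{odd2}$, proved in the lemma just above, asserts that the multiplicity of $(\lambda, 1^{n-\lceil 3m/2 \rceil})$ in $B(g,n,n-\ell)_{m-1}$ vanishes; hence so does its multiplicity in $d_m(B_m)$, and the multiplicity in $H_m$ coincides with that in $B_m$.

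To pin down which core-graph summands contribute, Corollary $\ref{dscor}$ writes $B_m$ as $\bigoplus_\xi \widehat{A}_\xi(n)$ over core graphs $\xi$ of degree $m$ and type $(g,k,u)$ with $k \leq n$ and $k - u \leq \ell$. Every irreducible in $\widehat{A}_\xi(n) = A_\xi \circ V_{1^{n-k}}$ has at least $n - k$ rows, while $(\lambda, 1^{n-\lceil 3m/2 \rceil})$ has exactly $n - m$ rows (using $\lceil 3m/2 \rceil - \lceil m/2 \rceil = m$). Thus only $k = m$ can contribute, and by Lemma $\ref{satlemma}$ such cores are precisely the graphs $\theta_{g,\ell}(p)$ for $p$ of the same parity as $m$ with $0 \leq p < g$ and $p \leq m$.

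To decompose each $A_{\theta_{g,\ell}(p)}$ as an $S_m$-module, a sign check along the lines of Section $\ref{fpss}$ shows that every generator of $Aut(\theta_{g,\ell}(p))$ --- swapping the two parallel edges of a double-edge bond, permuting the three edges of a triple-edge bond, exchanging two double-edge vertices, exchanging two single-edge vertices, or swapping the two legs at a single-edge vertex --- acts on $\det(\theta_{g,\ell}(p)) = \det(E) \otimes \det^{-1}(D)$ with cancelling signs from the two factors. Hence $\det(\theta_{g,\ell}(p))_{Aut} \cong \mathbb{Q}$ is the trivial representation of $I_{\theta_{g,\ell}(p)} = S_p \times (S_2 \wr S_y)$, where $y = (m-p)/2$. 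Using induction in stages and the classical identity $\bigoplus_{\tau \vdash y} V_{2\tau} \cong Ind_{S_2 \wr S_y}^{S_{2y}}(\mathbb{Q})$, one obtains
$$A_{\theta_{g,\ell}(p)} \;\cong\; \bigoplus_{\tau \vdash y} Ind_{S_p \times S_{2y}}^{S_m}\bigl(V_p \boxtimes V_{2\tau}\bigr).$$

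Finally, two applications of the Pieri rule identify the resulting multiplicity with the count on the $\mathsf{Stab}$ side. Set $\nu := (\lambda, 1^{n-\lceil 3m/2 \rceil})$. For any $\mu \vdash m$, the multiplicity of $V_\nu$ in $V_\mu \circ V_{1^{n-m}}$ counts vertical $(n-m)$-strips $\nu/\mu$; since $\nu$ has exactly $n-m$ rows, every row must lose a cell, so $\mu = \lambda'$ is forced. Thus the multiplicity of $V_\nu$ in $\widehat{A}_{\theta_{g,\ell}(p)}(n)$ equals the multiplicity of $V_{\lambda'}$ in $A_{\theta_{g,\ell}(p)}$, which by a second Pieri computation (multiplication of $V_{2\tau}$ by the trivial $V_p$) is the number of $\tau \vdash y$ for which $\lambda'/2\tau$ is a horizontal $p$-strip. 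Setting $\eta = 2\tau$ and $\pi = \lambda' - 2\tau$ identifies these with the pairs $(\eta,\pi) \in \Lambda(y,p)$ associated to $\lambda$, and summing over admissible $p$ matches $\mathsf{Stab}(g,n,n-\ell)$. The main obstacle is the sign bookkeeping for $Aut(\theta_{g,\ell}(p))$; once the triviality of $\det(\theta_{g,\ell}(p))_{Aut}$ is established, the $\Lambda(y,p)$-to-Pieri bijection is a routine unwinding.
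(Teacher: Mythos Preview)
Your proposal is correct and follows essentially the same line as the paper's proof, with one organizational difference worth noting. The paper establishes that $X(y,p)(n)$ consists of cycles by directly analyzing the differential: contracting any (necessarily marked) edge of $\theta_{g,\ell}(p)\wedge(n-m)$ produces a graph with at least $n-m+1$ marked legs, hence supports only irreducibles with more than $n-m$ rows. You instead invoke the short exact sequence $0\to H_m\to B_m\to d_m(B_m)\to 0$ together with the chain-level vanishing in degree $m-1$, which has the virtue of reusing the previous lemma rather than re-arguing the row count. Both arguments rest on the same underlying observation and the same computation of $A_{\theta_{g,\ell}(p)}$ via induction from $(S_2\wr S_y)\times S_p$, followed by the same Pieri bookkeeping.

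One small point of precision: the second statement of Theorem~\ref{odd2} as stated concerns $H_i$, not $B(g,n,n-\ell)_{m-1}$. What you actually need is the chain-level vanishing established in the \emph{proof} of the preceding lemma (shown there for $n=\lceil 3m/2\rceil$ and extended to all $n\geq\lceil 3m/2\rceil$ by representation stability, or re-derived directly by applying your row-counting argument from the second paragraph to core graphs of degree $m-1$). This is a citation issue only; the substance of your argument is sound.
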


\begin{proof} 
Having fixed $g,\ell,m$ as above we choose a pair of non-negative integers $(y,p)$ with $m=2y+p$ and $p<g$.  This ensures $p$ satisfies the conditions of Definition $\ref{theta}$, and we define the core graph $\theta:=\theta_{g,\ell}(p)$ as in this definition.  In particular, this $\theta$ is a graph of type $(g,m,m-\ell)$.

Recall (Lemma $\ref{indlem}$) the $S_m$-representation $A_\theta$ is an induced representation as follows.  First, for any leg labeling of the graph $\theta$ we can consider the group of permutations of $S_m$ which fix the graph, called $I_\theta$.  In this case $I_{\theta} \cong (S_2 \wr S_y) \times S_p$.  The action of $I_\theta$ on $det(\theta)_{Aut(\theta)}$ is trivial on both factors, since the action only permutes marked edges (which have degree $1-1=0$) or unmarked legs.
  	
  	Therefore, writing 1 for the trivial representation of each factor, the $S_m$ representation $A_{\theta} \cong Ind_{S_2 \wr S_y \times S_p}^{S_m}(1\boxtimes 1) $ has the following description.  We first induce $Ind_{S_2 \wr S_y}^{S_{2y}}(1)$ to find the sum of all partitions $\eta$ of $2y$ consisting of only even entries, \cite[Proposition 2.3' (ii)]{KT}.  We then induce the result $Ind_{S_{2y}\times S_p}^{S_{m}}(-)$, by applying the Pieri rule.  The irreducible decomposition of the resulting $S_m$-module is indexed precisely by those partitions of the form $\eta+\pi$ where the pair $(\eta,\pi)$ is as described in Definition $\ref{lambdadef2}$.

  	Since the irreducible decomposition of $A_\theta$ is indexed by such $(\eta,\pi)$, this implies that for each such $(\eta,\pi)$ the partition $\eta+\pi+1^{n-m}$ indexes a corresponding irreducible of $\widehat{A}_{\theta}(n):= A_\theta \circ V_{1^{n-m}}$.   Hence each partition in $\Lambda(y,p)$ indexes a corresponding irreducible factor of $\widehat{A}_{\theta}(\lceil 3m/2 \rceil)$, from which we find
  	$$
  	X(y,p)(n) \subset \widehat{A}_{\theta}(n) \subset B(g,n,n-\ell).
	$$
	We remark that this first inclusion is far from being an equality, rather $X(y,p)$ corresponds to the irreducibles having the minimum possible number of rows (aka blocks).  Put another way, it corresponds to only those terms in $A_\theta \circ V_{1^{n-m}}$ formed by adding one box to each row in the associated Young diagram.
	
	We now apply the differential of $B(g,n,n-\ell)$ to $X(y,p)(n)$.  Let $\theta\wedge (n-m)$ denote the unmarked graph formed by adjoining $n-m$ marked legs to $\theta$.  Since every flag adjacent to the DV of $\theta$, hence $\theta\wedge (n-m)$, is marked, the only possible differential terms supported on $\widehat{A}_{\theta}(n)$ correspond to edge contraction.  Note however $d(\widehat{A}_{\theta}(n))$ is supported on graphs having a minimum of $n-m+1$ marked legs.  Indeed, every edge of $\theta \wedge (n-m)$ is marked, and upon contraction this marking can only result in either a double marked tadpole, which would be zero by definition, or an additional marked leg.  Therefore every irreducible appearing in $d(\widehat{A}_{\theta}(n))$ has at least $n-m+1$ rows.  However, each irreducible appearing in $X(y,p)(n)$ has at most $n-m$ rows.  Thus $d(X(y,p)(n))=0$, and so $X(y,p)(n)$ is contained within the $d$-cycles in top degree, hence contained within the homology. 

	Summing over all $p$, we thus conclude that the multiplicity of $(\lambda,1^{n-\lceil 3m/2 \rceil})$ in $\mathsf{Stab}(g,n,n-\ell)$ is less than or equal to its multiplicity in $H_m(B(g,n,n-\ell))$.  But if they weren't equal there would have to be a core graph $\xi$ distinct from each $\theta_{g,\ell}(p)$ such that $\widehat{A}_\xi(n)$ supports a copy of $(\lambda,1^{n-\lceil 3m/2 \rceil})$.  This could only happen if $\xi$ was of type $(g,m,m-\ell)$, otherwise every irreducible would have more than $\lceil m/2 \rceil$ rows, which in turn would contradict Lemma $\ref{satlemma}$.
	\end{proof}

Note that the third statement of Theorem $\ref{odd2}$ follows from this result.  In particular the Littlewood-Richardson coefficient $N_{\lambda^\prime, 2\eta, p}$ appearing in that statement counts the ways in which $\lambda$ can be written as a sum $1^{\lceil m/2\rceil}+2\eta+\pi$ according to the conditions of Definition $\ref{lambdadef2}$.

\subsection{Examples.}

Combining Corollary $\ref{3h2}$ and Theorem $\ref{odd2}$, we can compute the multiplicity of any irreducible $\lambda$ in $H_m(B(g,n,r))$ provided that $(g,n,r)$ falls in the stable range and that $\lambda$ has at most $n-m$ rows.  Let us give several examples.  As above we denote this piece of the homology by $\mathsf{Stab}(g,n,r)$.  In what follows we modify the notation, denoting $V_\lambda$ simply by $\lambda$.

{\bf Example: Excess 3.}  When $m=2y+p=3$ we find $(y,p)=(0,3)$ or $(1,1)$.  We then tabulate the relevant partitions of $\lceil 3m/2\rceil = 5$ to be 
$\Lambda(1,1) = \{(4,1), (3,2)\}$, and 
$\Lambda(0,3) = \{(4,1)\}$. 
We thus find:
$$\mathsf{Stab}(g,n,r) = 2(4,1,1^{n-5})\oplus (3,2,1^{n-5})\hookrightarrow H_3(B(g,n,r))$$ for $n \geq \lceil 3m/2\rceil =5$ and $g \geq 3$.  In the case $g=2$ the condition $p<g$ implies that  $(4,1,1^{n-5})\oplus (3,2,1^{n-5})$ will appear in the homology.  Comparing this result to the computations of \cite[Section 4.4]{PW} confirms this result in the case $r=11$.

{\bf Example: Excess 4.} When $m=2y+p=4$ we find $(y,p)=(0,4)$, $(1,2)$, or $(2,0)$.  We then tabulate the relevant partitions of $\lceil 3m/2\rceil = 6$ to be 
 $\Lambda(0,4)= \{(5,1)\}$,
$\Lambda(1,2) = \{ (5,1), (4,2), (3,3) \}$, and
$\Lambda(2,0) =\{(5,1), (3,3) \}$. We thus find that if $(g,n,r)$ satisfies $m=4$, $n\geq 6$ and $g>4$:
$$
\mathsf{Stab}(g,n,r) = 
3(5,1,1^{n-6})\oplus(4,2,1^{n-6}) \oplus 2(3,3,1^{n-6}) \hookrightarrow H_{4}(B(g,n,r)).
$$
Imposing the condition on summands that $p<g$, we can also draw conclusions in genus $1$ and $3$.  For example if $g=3$, $m=4$ and $n\geq  6$ then
$$
\mathsf{Stab}(3,n,r) = 2(5,1^{n-5})\oplus(4,2,1^{n-6}) \oplus 2(3,3,1^{n-6}).
$$
Specializing to the case $r=11$ we find stable terms $(g,n,r) = (1,13,11),(3,10,11)$ and $(5,7,11)$, in which case these coefficients are confirmed by \cite[Theorem 1.2]{Burk}.   Specializing to the case $r=15$ we get an additional stable term of excess $4$, namely $(g,n,r)=(7,8,15)$.  Applying this calculation in that case recovers the example discussed in the introduction.

{\bf Example: Excess 7.} As a final example, we consider the implications of Theorem $\ref{stab}$ in an example of excess beyond the explicit computations of \cite{PW}, \cite{Burk}, \cite{CLPW}.

Suppose $(g,n,r)$ is of excess $m=2y+p=7$.  We find $(y,p)=(1,5), (2,3)$ or $(3,1)$.  We then tabulate the relevant partitions of  $\lceil 3m/2 \rceil = 11$ (with multiplicity).  They are:

$\Lambda(1,5) = \{(8,1^3), (7,2,1,1), (6,3,1,1) \}$

$\Lambda(2,3) = \{(8,1^3), (7,2,1,1), 2(6,3,1,1), (5,4,1,1), (5,3,2,1), (4,3,3,1)\}$


$\Lambda(3,1) = \{(8,1^3), (7,2,1,1),  (6,3,1,1),  (5,4,1,1),  (5,3,2,1),  (4,3,3,1),  (3,3,3,2)\}$

We thus conclude that if $(g,n,r)$ satisfies $m=7,n\geq 11$ and $g > 5$ then
\begin{align*}
\mathsf{Stab}(g,n,r) =  3(8,1^3,1^{n-11}) \oplus 3(7,2,1^2,1^{n-11}) \oplus  4(6,3,1^2,1^{n-11})  \oplus 2(5,4,1^2,1^{n-11}) \oplus \\  2(5,3,2,1,1^{n-11})   \oplus 2(4,3^2,1,1^{n-11}) \oplus (3^3,2,1^{n-11})
\hookrightarrow H_{7}(B(g,n,r))	
\end{align*}
with similar statements, restricting to the $p< g$ summands, in the cases $g=2$ and $g=4$.  If one were to specialize to the case $r=11$ or $r=15$, these statements would apply when $(g,n,r)= (2,13,11)$, $(2,17,15)$, $(4,14,15)$ and $(6,11,15)$.

\section{$B(1,n,r)$ and Whitehouse modules.}
To conclude, we compute $H_\ast(B(1,n,r))$ as an $S_n$ representation.  We then observe that, in genus $g=1$, our sharp stability result may be interpreted as a lift of the sharp stability result of Hersh and Reiner \cite{HR}.

The case $r=0,1$ being trivial we restrict attention to $r\geq 2$ from here on.  The first step will be to compute the homology of $B(1,n,r)$ as an $S_{n-1}$ representation.  This first step has a precursor in \cite{WardStir}, which calculated the homology of a chain complex which is chain homotopic to $B(1,n,r)$, but we opt to give a self-contained proof in our current context.  In what follows we denote $Res^{S_n}_{S_{n-1}}$ simply by $\downarrow$, and we write $C(\mathbb{R}^3,n)$ for the topological space of $n$ distinct labeled points in $\mathbb{R}^3$.

\begin{theorem}\label{n-1}
	$$ \downarrow H_i(B(1,n,r)) = 
	\begin{cases} H^{2(n-r)}(C(\mathbb{R}^3,n-1))\tensor sgn_{n-1} & \text{ if } i =2(n-r) \\
		0 & \text{ else} \end{cases}
	$$
\end{theorem}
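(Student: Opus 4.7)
The plan is to exploit the tree structure that prevails when $g=1$ and reduce the computation to Cohen's description of $H_\ast(C(\mathbb{R}^3,n-1))$ in terms of the Lie operad in arity $n-1$.

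First, I would observe that since $\beta = g-1 = 0$, every underlying graph of a chain in $B(1,n,r)$ is a tree, which I root at the distinguished vertex.  All marked flags lie at the root, and the differential reduces to contraction of descendant edges together with marking of unmarked flags at the root.  This rooted-tree description makes the chain complex quite concrete.

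Next, I would single out the leg labeled $n$ and form the short exact sequence of chain complexes
$$0 \to B^{\mathrm{root}} \to B(1,n,r) \to B^{\mathrm{other}} \to 0,$$
where $B^{\mathrm{root}}$ is the subcomplex consisting of graphs in which leg $n$ is a marked leg at the root, and $B^{\mathrm{other}}$ is the quotient.  The subspace $B^{\mathrm{root}}$ is closed under the differential because neither contraction of descendant edges nor marking of other flags disturbs the status of leg $n$; deleting leg $n$ then identifies $B^{\mathrm{root}}$ with $B(1,n-1,r-1)$ up to a uniform degree shift.  I would then leverage this sequence by induction on $n-r$: the base case $r=n$ reduces to $B(1,n,n)$, a one-dimensional complex concentrated in degree zero carrying the alternating representation, which matches $H_0(C(\mathbb{R}^3,n-1)) \tensor sgn_{n-1}$ upon restriction to $S_{n-1}$.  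For the inductive step, I would combine the inductive hypothesis applied to $B^{\mathrm{root}}$ with an analysis of $H_\ast(B^{\mathrm{other}})$ and verify that the connecting homomorphism assembles these into the desired configuration space homology tensored with the sign.

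The principal obstacle will be this analysis of $B^{\mathrm{other}}$: while $B^{\mathrm{root}}$ is transparently a copy of a smaller graph complex, $B^{\mathrm{other}}$ mixes graphs in which leg $n$ sits at a non-root vertex or is an unmarked flag at the root, and the quotient differential has non-trivial components arising from edge contractions that bring leg $n$ into the root.  A cleaner alternative is to bypass this sequence entirely by constructing an explicit chain-level quasi-isomorphism between $B(1,n,r)$ (restricted to $S_{n-1}$) and the rooted-tree bar complex of the Lie operad in arity $n-1$; Cohen's theorem then delivers $H_\ast(C(\mathbb{R}^3,n-1))$, and the sign twist $\tensor sgn_{n-1}$ arises from matching the orientation conventions $det(E)\tensor det^{-1}(D)$ with the Koszul sign structure on the Lie side.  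Bookkeeping the global shift $\Sigma^n$ and converting between the cohomological and homological degree $2(n-r)$ is routine but must be done carefully.
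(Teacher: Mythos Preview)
Your proposal has a genuine gap that you yourself flag: the analysis of $B^{\mathrm{other}}$ is the entire content of the computation, and neither the short exact sequence nor the vague ``direct quasi-isomorphism to the Lie bar complex'' tells you how to carry it out.  The inductive identification $B^{\mathrm{root}}\cong B(1,n-1,r-1)$ is correct but buys you almost nothing, because the homology you are after does not split along this sequence in any evident way---you would need to know $H_\ast(B^{\mathrm{other}})$ and the connecting map, and those are no easier than the original problem.  Your alternative ``bypass'' is not a proof either: you have not said what the map to the Lie bar complex is, nor why it is a quasi-isomorphism.

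The paper's argument makes a different and decisive choice: it roots each tree at the leg labeled $n$ rather than at the distinguished vertex.  This creates a nontrivial path from the DV down to the root, and the key statistic is $p(\mathsf{t})$, the length of that path.  One then shows that the subcomplex $K$ spanned by trees with $p\neq 0$ or with $|\mathsf{dv}|>r$ is acyclic, via an explicit filtration by $\eta(\mathsf{t})=2|E|-p+\nu-s$ (where $\nu$ records whether the output flag of the DV is marked).  On the associated graded, the surviving differential pairs each tree in $K$ with a canonical partner (either mark the output of the DV, or contract the edge just below a minimal-valence DV and mark the new output), so $K$ is visibly contractible.  The quotient $B(1,n,r)/K$ is then exactly what you were hoping to see: trees whose DV is the root vertex and has valence precisely $r$, which split over partitions of $\{1,\dots,n-1\}$ into $r-1$ blocks, each block contributing a copy of the operadic bar complex of $\mathrm{Com}$, hence $\mathrm{Lie}\otimes\mathrm{sgn}$.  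Summing over partitions reproduces Cohen's induced-representation description of $H_{2(n-r)}(C(\mathbb{R}^3,n-1))$, and the sign twist falls out of the comparison.  The moral is that rooting at leg $n$ (so that the $S_{n-1}$-action is manifest and the DV is free to move) is what makes the acyclic subcomplex visible; rooting at the DV, as you do, obscures this.
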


\begin{proof}  
	Fix $n$ and $r$.  Recall that $B(1,n,r)$ is described as a direct sum indexed by isomorphism classes of marked graphs $\mathsf{t}$ of type $(1,n,s)$ over all $s\geq r$.  Such a marked graph is necessarily a tree (since $g=1$ implies $\beta=g-1=0$), therefore has no automorphisms.  For each such marked tree, let us choose to consider the leg labeled by $n$ as the root of the tree, and we call the vertex adjacent to the root the root vertex.  It may or may not be the same as the distinguished vertex.  This choice endows the tree $\mathsf{t}$ with a directed structure allowing us to refer to the input and output flags at each vertex.  In particular there is a unique output flag at each vertex and a unique path connecting the distinguished and root vertices (which may be empty if these two vertices coincide).  Edges on this path are said to be ``below'' the DV.
	
	Starting from such a marked graph $\mathsf{t}$ of type $(1,n,s)$ with $E$ edges and distinguished vertex $\mathsf{dv}$, we define several auxiliary statistics.  First $p(\mathsf{t})$ is the number of edges below $\mathsf{dv}$.  Second $\nu(\mathsf{t})$ is defined to be $0$ unless both $|\mathsf{dv}|:=|a^{-1}(\mathsf{dv})|>r$ and the output of $\mathsf{dv}$ is marked, in which case $\nu(\mathsf{t})$ is defined to be $1$.  From these statistics, we define a subcomplex $K \subset B(1,n,r)$ to be the indexed by marked trees $\mathsf{t}$ having $p(\mathsf{t})\neq 0$ or  $|\mathsf{dv}|> r$.  To check that this is indeed a subcomplex, note that the operation $\partial_e$, which contracts an edge $e$, can take $p$ from being non-zero to $p=0$ only by increasing the valence of $\mathsf{dv}$.

		Let us prove that the subcomplex $K$ has no homology.  For this we define $\eta(\mathsf{t}) = 2|E|-p+\nu - s$.  We then filter $K$ by declaring filtration degree $k$ to consist of summands $det(\mathsf{t})$ indexed by those marked graphs $\mathsf{t}$ satisfying $\eta(\mathsf{t}) \leq k$.

	To verify that this is indeed a filtration we compare the value of $\eta$ on a tree $\mathsf{t}$ and the terms indexing the image of the differential applied to $det(\mathsf{t})$. 	First, consider the effect of marking a flag $\partial_f$; $2|E|-p$ is unchanged, while $\nu-s$ decreases by $1$, unless the marked flag was the output of $\mathsf{dv}$, in which case it is unchanged.

Next consider the effect of contracting an edge $\partial_e$ (and reallocating the marking when $e$ is marked).   If the edge is not below $\mathsf{dv}$, then $2|E|$ decreases by $2$, $\nu$ increases by at most $1$, while $s$ and $p$ are unchanged.  Hence $\eta$ decreases.  If the edge is below $\mathsf{dv}$, but not adjacent to it then $2|E|-p$ decreases by $1$ and $\nu-s$ is unchanged, hence $\eta$ decreases.  If the edge is below and adjacent to $\mathsf{dv}$, then $2|E|-p$ decreases by $1$, and $\nu$ increases by at most 1.  Thus $\eta$ is non-increasing, and is constant only if $\mathsf{dv}$ had arity $r$ before contraction, hence $e$ was marked, and if the new marking of $\mathsf{t}/e$ is on the output edge of the new distinguished vertex.

	We thus conclude that $\eta$ does indeed define a filtration on $B(1,n,r)$, and moreover that the differential preserves filtration degree only when marking on unmarked output edge or when contracting an output edge of a distinguished vertex having minimal valence $r$, and then marking the new output edge at the distinguished vertex of the resulting tree.  This in turn implies that the associated graded of $(K,\eta)$ splits as $\mathsf{gr}_\eta(K) = K_1\oplus K_2$, where $K_1$ is indexed by marked trees having exactly $r$ markings and marked output and $K_2$ is indexed by those trees having more than $r$ markings or unmarked output (or both). 

The summands $K_1$ and $K_2$ in turn split into summands which are manifestly acyclic.  To see this let us write $f_{\text{out}}$ for the output of $\mathsf{dv}$.  The summand $K_1$ splits into summands of the form
$$0\to det(\mathsf{t}) \stackrel{\cong}\to det(\mathsf{t}/e\wedge f_{\text{out}}) \to 0$$
by pairing a tree whose distinguished vertex is of minimal valence (which must then have $p>0$ to belong to $K$) with the result of contracting the edge below it and marking the output.  In particular, in the associated graded, this is the unique differential term emanating from the source, while no differential term emanates from the target.  Hence the homology of this summand is $0$.  Likewise, $K_2$ splits into summands of the form
$$0\to det(\mathsf{t}) \stackrel{\cong}\to det(\partial_{f_{\text{out}}}\mathsf{t}) \to 0$$
Hence the homology of this summand is $0$.

We thus conclude that the associated graded of $(K,\eta)$, hence $K$ itself, has no homology.  Thus to compute the homology of $B(1,n,r)$, it suffices to compute the homology of the quotient by $K$.  

This quotient is spanned by those trees having $p(\mathsf{t})=0$ and $|\mathsf{dv}|=r$, and with differential terms which preserve these statistics.  Each such tree specifies a partition of the set $\{1\cdc n-1\}$ into $r-1$ blocks, with each block of size at least $2$, corresponding to the connected components of the forest formed by erasing $\mathsf{dv}$ and its adjacent flags.  The differential terms which survive in this quotient preserve this partition, so we have a direct sum decomposition of the resulting quotient complex over such partitions.

The homology of a summand corresponding to a partition $\pi$, call it $\op{H}_\pi$, is concentrated in maximally expanded degree.  This is seen by comparing each branch of the tree with the bar construction of the commutative operad, in particular if $\pi=\{b_i\}$ then $\op{H}_\pi \cong \tensor_i (Lie(b_i)\tensor sgn)$, concentrated in maximally expanded degree.

Define $S_\pi\subset S_{n-1}$ to be the subgroup which fixes the partition $\pi$.  The group $S_\pi$ is generated by elements which permute the entries in a block of $\pi$ along with those partitions which exchange entries in two blocks of the same size.  The action of $S_\pi$ on $\op{H}_\pi$ is determined on a block by the fact $Lie(m) = Ind_{Z_m}^{S_m}(\zeta_m)$.  The action when exchanging two blocks of size $k$ requires exchanging $(k-2)$ pairs of unmarked edges, along with one pair of marked edges, so the result is $(-1)^{k-2}$.  Notice that the permutation which exchanges these two blocks of size $k$ consists of $k$ transpositions so has parity $(-1)^k$.  Thus, acting by a permutation in $S_\pi$ which exchanges blocks of the partition simply multiplies by the sign of the permutation.

We may therefore describe $\op{H}_{\pi}$ as an induced representation as follows.  Define a permutation $\sigma\in S_{n-1}$ to be a product of cycles $\sigma_i$ which cyclically permute the elements in block $b_i$.  The centralizer of $\sigma$, call it $Z(\sigma)$, is a subgroup of $S_\pi$ and $\op{H}_\pi \cong Ind_{Z(\sigma)}^{S_{\pi}}(Y)$, where $Y$ is the 1-dimensional representation of $Z(\sigma)$ which acts by $\zeta \tensor sgn$ on a block and by $sgn$ when exchanging blocks.  Summing over all such partitions, we have a description of $\downarrow H_{2(n-r)}(B(1,n,r))$ as 
$$\bigoplus_{[\sigma]} Ind^{S_{n-1}}_{Z(\sigma)}(Y)$$
with direct sum taken over conjugacy classes $[\sigma]$ in $S_{n-1}$ having $r-1$ cycles.  To conclude, it suffices to compare this description to the well known description of the $S_{n-1}$ module $H^{2(n-r)}(C(\mathbb{R}^3,n-1))$ as an induced representation.  This description, follows from the work of Cohen in \cite{CLM} and is the odd dimensional analog of the description given by Lehrer and Solomon in \cite{LS}, see also \cite{SW} and \cite{HR}.  Namely,  $H_{2(n-r)}(C(\mathbb{R}^3,n-1))$ is isomorphic to $\oplus Ind_{Z(\sigma)}^{S_{\pi}}(X)$, indexed as above but where $X$ acts by $\zeta_k$ on a $k$-cycle and acts trivially when exchanging cycles of equal length.  Tensoring this description with the sign representation completes the proof.
\end{proof}

\subsection{Lift to the Whitehouse modules}

Having computed $H_\ast(B(1,n,r))$ as a graded $S_{n-1}$ module (Theorem $\ref{n-1}$) we now turn to the $S_n$-module structure.

To describe it, we first recall that Gerstenhaber and Schack \cite{GS} defined a decomposition of the regular representation of the symmetric group $S_n$ via Eularian idempotents.  The regular representation of $S_n$ is the restriction of an $S_{n+1}$-action and subsequently, Whitehouse \cite{Whitehouse} defined a compatible lift of this Eularian decomposition as $S_{n+1}$-modules.  

We use the notation $\mathsf{W}_{n,k} := F_{n}^{(k)}$, for these Whitehouse modules, with the latter being defined in \cite[Definition 1.3]{Whitehouse}.  In particular $\mathsf{W}_{n+1,k}$ is a representation of the symmetric group $S_{n+1}$ of dimension $s_{n,k}$, the Stirling number of the first kind, i.e.\ the number of the permutations in $S_{n}$ having $k$ cycles.

The Whitehouse modules have a topological interpretation, see \cite[Proposition 2]{ER}, which in particular identifies $\downarrow \mathsf{W}_{n+1,k}\cong H^{2(n-k)}(C(\mathbb{R}^3,n))\tensor V_{1^n}$.  Combining this with Theorem $\ref{n-1}$ above tells us $$\downarrow \mathsf{W}_{n,r-1} \cong H^{2(n-r)}(C(\mathbb{R}^3,n-1))\tensor V_{1^{n-1}} \cong \downarrow H_{2(n-r)}(B(1,n,r)),$$
and so the Whitehouse modules are a natural candidate for the lift of $\downarrow H_{2(n-r)}(B(1,n,r))$ to the full $S_n$ action. 




\begin{theorem}  The homology of the chain complex of $S_n$-modules $B(1,n,r)$ is
	$$  H_i(B(1,n,r)) =
	\begin{cases} \mathsf{W}_{n,r-1} & \text{ if } i =2(n-r) \\
		0 & \text{ else} \end{cases} $$
\end{theorem}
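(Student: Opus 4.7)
The plan is to identify $H_{2(n-r)}(B(1,n,r))$ with $\mathsf{W}_{n,r-1}$ via character comparison on $S_n$. By Theorem~\ref{n-1}, the restriction $\downarrow H_{2(n-r)}(B(1,n,r))$ is already $H_{2(n-r)}(C(\mathbb{R}^3,n-1))\otimes sgn_{n-1}$, which coincides with $\downarrow \mathsf{W}_{n,r-1}$ by the topological description of the Whitehouse modules recalled before the theorem statement. Since an $S_n$-character is determined by its values on conjugacy classes, and every class containing a permutation with at least one fixed point has a representative lying in the subgroup $S_{n-1}\subset S_n$, the equality of characters on all such classes is automatic from this agreement of restrictions. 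It therefore suffices to match the two characters on the remaining classes, namely those of derangements of $\{1,\dots,n\}$.

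For this I would exploit the fact that the homology of $B(1,n,r)$ is concentrated in a single degree, so that the Hopf trace formula reads off the homology character from the alternating chain-level trace: for any $\sigma\in S_n$,
$$\chi_{H_{2(n-r)}(B(1,n,r))}(\sigma)=\sum_{i} (-1)^{i-2(n-r)}\,\chi_{B(1,n,r)_i}(\sigma).$$
Each chain-level trace $\chi_{B(1,n,r)_i}(\sigma)$ is a signed sum, indexed by isomorphism classes of marked trees $[\gamma]$ for which $\sigma\gamma\cong \gamma$, of the traces of the induced action on $\det(\gamma)_{\mathrm{Aut}(\gamma)}$. For $\sigma$ a derangement this condition is restrictive: a tree automorphism realising the leg permutation $\sigma$ must cyclically permute identical sub-branches rooted at a common vertex, so the shape of $\gamma$ is essentially determined (up to a bounded amount of choice at each branching) by the cycle type of $\sigma$. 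Enumerating these configurations with the appropriate signs gives an explicit formula for $\chi_{H_{2(n-r)}(B(1,n,r))}$ on derangements.

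The final step is to check that this formula agrees with the Whitehouse character on derangements, for which a symmetric-function description is available from the Eulerian-idempotent decomposition of the regular representation \cite{Whitehouse}. I expect this combinatorial identity to be the main obstacle: one needs to organise the signed sum over fixed trees so that the alternating factor $(-1)^i$ collapses the chain-level Euler characteristic onto the Eulerian character, presumably via a sign-reversing involution on the marked trees that are not in reduced form. As indicated in the passage preceding the theorem, working with $B(1,n,r)$ rather than with the quasi-isomorphic Stirling complex of \cite{WardStir} is what makes this feasible, because every graph involved is a tree and both the edge-contraction and flag-marking parts of $\partial$ interact transparently with tree automorphisms.
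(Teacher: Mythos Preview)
Your overall framework --- character comparison, settling classes with a fixed point via Theorem~\ref{n-1}, and invoking the Hopf trace formula on derangements --- is exactly how the paper begins. The divergence, and the gap, is in the last step. You propose to compute the chain-level alternating trace of a derangement $\sigma$ explicitly and then match it against the Whitehouse character, conceding that ``this combinatorial identity'' is the main obstacle and leaving it undone. That obstacle is real: the symmetric-function description of the Eulerian idempotents does not hand you a closed formula for $\chi_{\mathsf{W}_{n,r-1}}(\sigma)$ on derangements that is obviously equal to a signed count of $\sigma$-fixed marked trees, and organising the needed sign-reversing involution from scratch is not a small task.

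The paper sidesteps this entirely. Rather than compare with the Whitehouse character directly, it establishes the recurrence
\[
\downarrow H_i(B(1,n+1,r)) \;\cong\; H_i(B(1,n,r-1)) \oplus \bigl(H_{i-2}(B(1,n,r))\otimes V_{n-1,1}\bigr),
\]
which, by \cite[Proposition~1.4]{Whitehouse} and \cite[Proposition~1]{ER}, characterises the Whitehouse modules uniquely. On derangements $\sigma$ one has $\chi_\sigma(V_n\oplus V_{n-1,1})=0$, so the recurrence reduces to an equality of \emph{virtual} characters
\[
\chi_\sigma\bigl(\downarrow B(1,n+1,r)\bigr) \;=\; \chi_\sigma\bigl(B(1,n,r-1)\bigr) - \chi_\sigma\bigl(B(1,n,r)\bigr).
\]
Both sides are now chain-level quantities in the \emph{same} family of complexes, and the proof of Theorem~\ref{n-1} already supplies a quasi-isomorphic quotient of $\downarrow B(1,n+1,r)$ spanned by trees whose DV is the root vertex with valence exactly $r$. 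Erasing the root leg $n{+}1$ matches these bijectively with the trees on the right-hand side carrying exactly $r-1$ markings, and the comparison is finished without ever writing down the Whitehouse character. The missing idea in your proposal is precisely this: replace the external comparison with $\mathsf{W}_{n,r-1}$ by the internal recurrence among the $B(1,\ast,\ast)$, where the combinatorics is a one-line bijection rather than an identity to be discovered.
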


\begin{proof}  We first observe that it suffices to prove that there exists an isomorphism of $S_n$-modules:
\begin{equation}\label{recur}
\downarrow H_i(B(1,n+1,r)) \cong H_i(B(1,n,r-1)\oplus (H_{i-2}(B(1,n,r))\tensor V_{n-1,1})
\end{equation}	
for all $n,r$. 
The sufficiency follows from \cite[Proposition 1.4]{Whitehouse} as observed in \cite[Proposition 1]{ER}.  To establish this isomorphism, we compare the characters.  Let $\sigma\in S_n$, and consider $\chi_\sigma$ of both sides.  First, if $\sigma$ has a fixed point, then we can assume without loss of generality that $\sigma$ fixes $n$.  In light of Theorem $\ref{n-1}$, Equation $\ref{recur}$ can be rewritten as a known statement about the characters of the configuration space.  See eg \cite[Proposition 1]{ER}.
	
So it remains to compare the characters on each side of Equation $\ref{recur}$ when $\sigma\in S_n$ has no fixed points.  Since the homology of each chain complexes in Equation $\ref{recur}$ is concentrated in a single even degree, it is enough to show that chain complexes computing the homology on the respective sides have the same virtual character.

We therefore endeavor to compare the character of $\sigma$ of
$\downarrow B(1,n+1,r) \text{ and } B(1,n,r-1)\oplus (B(1,n,r)\tensor V_{n-1,1})$.  Since $\sigma$ has no fixed points, $\chi_\sigma(V_n\oplus V_{n-1,1}) = 0$, and hence it suffices to show that the virtual character of $\downarrow B(1,n+1,r)$ and $B(1,n,r-1)- B(1,n,r)$ coincide at $\sigma$.

By the proof of Theorem $\ref{n-1}$, the chain complex $\downarrow B(1,n+1,r)$ is quasi-isomorphic to a quotient having only those trees with DV adjacent to the root (by convention the leg labeled by $n+1$) and valence $r$ (so every flag at the DV is a marked flag).  The difference on the right hand side, on the other hand, can be described as those trees having exactly $r-1$ markings, and the comparison is made just by erasing the leg labeled by $n+1$ on the left hand side.
\end{proof}

To conclude, we observe that since the homology of $B(1,n,r)$ is concentrated in the top degree, the injective stabilization maps $\psi$ pass to injective maps on homology. 
 Theorem $\ref{mainthm}$ thus implies that for each $\ell\geq 0$, the sequence of Whitehouse modules $\mathsf{W}_{n,n-\ell-1}\tensor sgn_{n}$ forms a consistent sequence which is representation stable, stabilizing sharply at $n=3\ell$.  The unique term responsible for this sharp stabilization is a copy of $V_{3^\ell}\subset \mathsf{W}_{3\ell,2\ell-1}$.  Restricting the representation stable sequence $\mathsf{W}_{n,n-\ell-1}\tensor sgn_{n}$ from $S_{n}$ to $S_{n-1}$ in each position we find the representation stable sequence $H^{2\ell}(C(\mathbb{R}^3,n-1))$.  This restriction will stabilize at the introduction of $V_{3^\ell,1}\subset \mathsf{W}_{3\ell,2\ell}$.  Reindexing, we conclude that  $H^{2\ell}(C(\mathbb{R}^3,n))$ stabilizes at $n = 3\ell$, recovering the odd dimensional case of the result of Hersh and Reiner \cite[Theorem 1.1]{HR}.


\end{document}